\theoremstyle{theorem}
\newtheorem{theorem}{Theorem}
\newtheorem{corollary}[theorem]{Corollary}
\newtheorem{lemma}[theorem]{Lemma}
\newtheorem{proposition}[theorem]{Proposition}
\theoremstyle{definition}
\newtheorem{definition}[theorem]{Definition}
\newtheorem{example}[theorem]{Example}
\newtheorem{remark}[theorem]{Remark}
\thanks{The authors are partially supported by the Spanish Ministerio de Econom{\'{i}}a y Competitividad (grant MTM2015-63612-P)}
\subjclass[2010]{}
\begin{document}

\author{H. Barge}
\address{E.T.S. Ingenieros inform\'{a}ticos. Universidad Polit\'{e}cnica de Madrid. 28660 Madrid (Espa{\~{n}}a)}
\email{h.barge@upm.es}

\author{J. J. S\'anchez-Gabites}
\address{Facultad de Ciencias Econ{\'{o}}micas y Empresariales. Universidad Aut{\'{o}}noma de Madrid, Campus Universitario de Cantoblanco. 28049 Madrid (Espa{\~{n}}a)}
\email{JaimeJ.Sanchez@uam.es}

\title{Knots and solenoids that cannot be attractors of self-homeomorphisms of $\mathbb{R}^3$}
\begin{abstract} As a first step to understand how complicated attractors for dynamical systems can be, one may consider the following realizability problem: given a continuum $K \subseteq \mathbb{R}^3$, decide when $K$ can be realized as an attractor for a homeomorphism of $\mathbb{R}^3$. In this paper we introduce toroidal sets as those continua $K \subseteq \mathbb{R}^3$ that have a neighbourhood basis comprised of solid tori and, generalizing the classical notion of genus of a knot, give a natural definition of the genus of toroidal sets and study some of its properties. Using these tools we exhibit knots and solenoids for which the answer to the realizability problem stated above is negative.
\end{abstract}

\maketitle

\section*{Introduction}

When studying dynamical systems from a qualitative perspective it is usual to deal with \emph{stable attractors}. A (Lyapunov) stable attractor is a compact invariant set $K$ such that the trajectory of every point sufficiently close to $K$ approaches $K$ asymptotically. The stability condition means that points that are initially close to the attractor do not wander away too much before approaching the attractor. Heuristically, in the long run the dynamics of the system in a neighbourhood of the attractor will be indistinguishable from the dynamics on the attractor itself and so the latter captures, locally, the long term behaviour of the system.

Attractors frequently exhibit a very complicated topological structure, and it is natural to wonder how complicated this can be. This may be substantiated in the following ``realizability problem'': find criteria that, given a compactum $K \subseteq \mathbb{R}^n$, decide whether there there exists a dynamical system on $\mathbb{R}^n$ for which $K$ is an attractor. This is not completely precise yet, for one could consider continuous or discrete dynamics, require $K$ to be a global or just a local attractor, etc. Several authors have obtained results about realizability problems of this sort, and as an illustration we may refer the reader to \cite{bhatiaszego1}, \cite{garay1}, \cite{gunthersegal1}, \cite{jimenezllibre1}, \cite{peraltajimenez1}, \cite{mio5}, \cite{sanjurjo1} for the continuous case and to \cite{crovisierrams1}, \cite{duvallhusch1}, \cite{gunther1}, \cite{kato1}, \cite{ortegayo1}, \cite{mio6}, \cite{mio7} for the discrete case. A general idea that emerges from these works is that it is not the topology of $K$ what plays a crucial role in the realizability problem, but rather how $K$ sits in $\mathbb{R}^n$. For instance, any closed orientable surface can be clearly embedded in $\mathbb{R}^3$ as an attractor, but it can also be embedded in such a way that it cannot be so realized. The main goal of this paper is to exhibit more examples of this phenomenon, and for this purpose we shall focus on a class of compacta that we call \emph{toroidal}.


We recall that a compactum $K \subseteq \mathbb{R}^n$ is called \emph{cellular} if it has a neighbourhood basis comprised of $n$--cells; that is, sets homeomorphic to the closed unit ball in $\mathbb{R}^n$. A result of Garay \cite{garay1} states that both in the continuous and the discrete case a compactum $K \subseteq \mathbb{R}^n$ can be realized as a \emph{global} attractor if and only if it is cellular. Given that cells are handlebodies of genus zero, a natural step beyond cellularity is to consider compacta $K \subseteq \mathbb{R}^n$ that have neighbourhood bases comprised of handlebodies of genus one; that is, solid tori. This is the starting point of this paper: we will concentrate on the three dimensional case and call compacta with the property just described \emph{toroidal}. Toroidal sets appear naturally among attractors for homeomorphisms of $\mathbb{R}^3$, with the dyadic solenoid being a well known example. Unlike the cells that define a cellular set, solid tori can be knotted and so it would seem that toroidal sets can also be ``knotted'' in some way. We explore this idea by defining the genus of a toroidal set as a natural extension of the classical genus of a knot. As we shall see, toroidal sets may have infinite genus. Our interest in this stems from its applications to the realizability problem stated above: we shall prove that toroidal attractors must have finite genus and, using this, we will be able to construct (uncountably) many different examples of toroidal sets that cannot be realized as attractors because they have infinite genus; namely some wild knots that are expressed as an infinite connected sum of non-trivial knots and also knotted solenoids. Knotted solenoids were studied by Conner, Meilstrup and Repo\v{v}s in \cite{Conner1}.

The paper is organized as follows. The first two sections are purely topological in nature, with no dynamics present yet. In Section \ref{sec:def} we define toroidal sets, give some examples and explore their most basic properties. In Section \ref{sec:genus} we define the genus of a toroidal set and compute it for some examples, illustrating that a toroidal set may have infinite genus. We also show how the classical Alexander polynomial of knot theory can be naturally generalized for toroidal sets having finite genus. Dynamics enter the picture in Section \ref{sec:dynamics}, where we show that a toroidal attractor must have finite genus and, drawing on the previous sections, construct families of toroidal sets that cannot be realized as local attractors for homeomorphisms of $\mathbb{R}^3$. In \cite{mio6} a topological invariant $r(K)$ was introduced with the same purpose as the genus here; namely showing that certain sets $K \subseteq \mathbb{R}^3$ cannot be realized as attractors because of the way they sit in ambient space. The final section of this paper discusses toroidal sets from the perspective of the topological invariant $r$.

Some basic knowledge about the fundamental group and singular and \v{C}ech homology and cohomology theories, including the Alexander duality theorem, is needed in order to understand the paper. Among the classical references for this topic we may cite the books by Massey \cite{Massey1}, Hatcher \cite{hatcher1}, Munkres \cite{munkres3} and Spanier \cite{spanier1}. We shall use the notation $H_*$ and $H^*$ to denote the singular homology and cohomology functors and denote by $\check{H}^*$ the \v Cech cohomology functor. Singular and \v{C}ech cohomology coincide over polyhedra, so we will sometimes use both interchangeably.

\section{Toroidal sets} \label{sec:def}
 
In this section we shall introduce the class of toroidal sets and study their basic features.

 \begin{definition}
Let $K \subseteq \mathbb{R}^3$ be a compact set. We say that $K$ is \emph{toroidal} if it is not cellular and has a neighbourhood basis comprised of solid tori. A solid torus is a set homeomorphic to $\mathbb{S}^1 \times \mathbb{D}^2$. 
\end{definition}

We remark that a set $K$ constructed as an intersection of a nested sequence of solid tori is not automatically toroidal, since one must also check that it is not cellular. However, in the present paper we will mostly deal with sets $K$ having $\check{H}^1(K) \neq 0$, and these are certainly not cellular.

\begin{example} Let $K \subseteq \mathbb{R}^3$ be a polyhedral knot. Any regular neighbourhood of $K$ (in the sense of piecewise linear topology) is a solid torus. Thus, $K$ is a toroidal set (as just mentioned, the fact that $K$ is not cellular follows from $\check{H}^1(K) = \mathbb{Z})$.
\end{example}

Recall that a knot is a set $K \subseteq \mathbb{R}^3$ homeomorphic to the circumference $\mathbb{S}^1$. Our first example of a toroidal set is a well known construction in knot theory:

\begin{example} \label{ex:wild} Consider arranging infinitely many polyhedral knots $K_i$ as suggested in Figure \ref{fig:conn_sum1}. Each knot $K_i$ is placed in a cell $C_i$; the size of these cells decreases to zero as they approach the point $p$. We claim that the resulting knot $K$ is toroidal. To prove this let $T_i$ be a very thin tube that follows $K_1$ up to $K_i$ very closely and then ``swallows'' the knots $K_{i+1},K_{i+2},\ldots$ as suggested by the grey outline in Figure \ref{fig:conn_sum2}. Clearly these $T_i$, if constructed appropriately, form a neighbourhood basis of $K$.

\begin{figure}[h]
\begin{pspicture}(0,0)(14,4)
\rput[bl](0,0){\scalebox{0.7}{\includegraphics{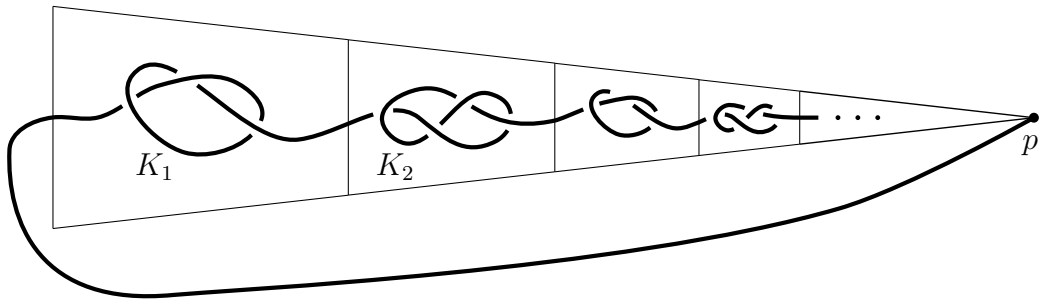}}}
\rput[bl](1.7,1.6){$K_1$} \rput[bl](4.9,1.6){$K_2$} \rput[t](13.6,2.2){$p$}
\end{pspicture}
\caption{Connected sum of infinitely many knots \label{fig:conn_sum1}}
\end{figure}

\begin{figure}[h]
\begin{pspicture}(0,0)(14,4)
\rput[bl](0,0){\scalebox{0.7}{\includegraphics{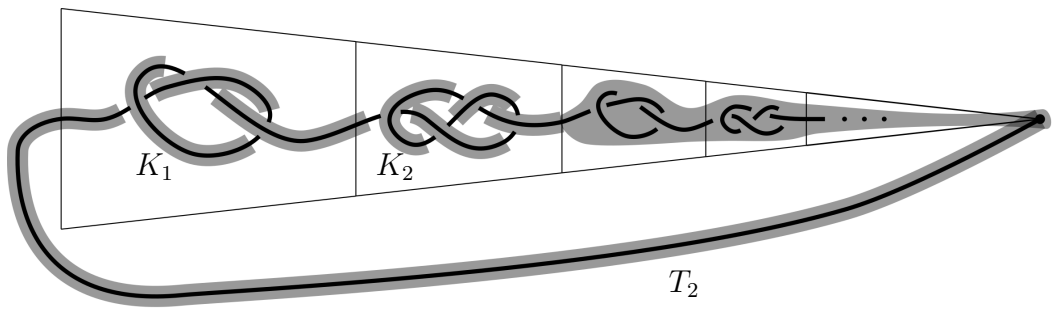}}}
\rput[bl](1.7,1.7){$K_1$} \rput[bl](4.9,1.7){$K_2$}
\rput[t](9,0.5){$T_2$}
\end{pspicture}
\caption{The torus $T_2$ \label{fig:conn_sum2}}
\end{figure}

\end{example}

The previous example is nothing but a connected sum of infinitely many polyhedral knots. A knot $K$ is \emph{tame} at a point $p\in K$ if there exists a neighbourhood $U$ of $p$ in $\mathbb{R}^3$ and a homeomorphism $h:\mathbb{R}^3\longrightarrow\mathbb{R}^3$ which carries $U\cap K$ onto a straight line. If $K$ is not tame at $p$ then it is said to be \emph{wild} at $p$. Notice that, by its very definition, the set of tame points is an open subset of $K$ and, hence, the set of wild points is compact. It is clear from the construction that the knot $K$ defined in Example \ref{ex:wild} above is locally polyhedral at each point except for, possibly, at the limit point $p$. On intuitive grounds one may suspect that $p$ is indeed a wild point of $K$, at least if the knots $K_i$ are truly nontrivial. We shall see later on that this is indeed the case.

\begin{example} \label{ex:solenoids} Consider a nested family $\{T_i\}_{i \geq 0}$ of solid tori in $\mathbb{R}^3$ whose thickness decreases to zero and such that $T_{i+1}$ winds $n_i$ times inside $T_i$, where $n_i \geq 2$. The set $K=\bigcap_{i \geq 0} T_i$ is a continuum known as a \emph{generalized solenoid} (sometimes more stringent conditions are placed on the diameters and placement of the $T_i$, but this very general definition will be enough for our purposes). If there exists a non-negative integer $n$ such that $n_i=n$ for every $i$ big enough the set $X$ is usually called an $n$--adic solenoid. Solenoids have nonvanishing $\check{H}^1$ with $\mathbb{Z}$ coefficients (see the computations below), so they are not cellular; thus, they are toroidal sets.
\end{example}
 
\begin{example} \label{ex:whitehead} Let $T_0$ be a standard solid torus in $\mathbb{R}^3$ and let $h:\mathbb{R}^3\longrightarrow\mathbb{R}^3$ be a homeomorphism such that $h(T_0)$ is contained in the interior of $T_0$ as indicated in Figure~\ref{fig:wh}. Consider the decreasing family of solid tori $\{T_i\}$, where $T_i=h^i(T_0)$, and let $K$ be the intersection of this family. The resulting continuum $K$ is usually called the \emph{Whitehead continuum}. In this case it is also true, but not easy to prove, that $K$ is not cellular \cite[p. 156]{hempel1}. Thus the Whitehead continuum is indeed a toroidal set.
\begin{figure}
\includegraphics[scale=1]{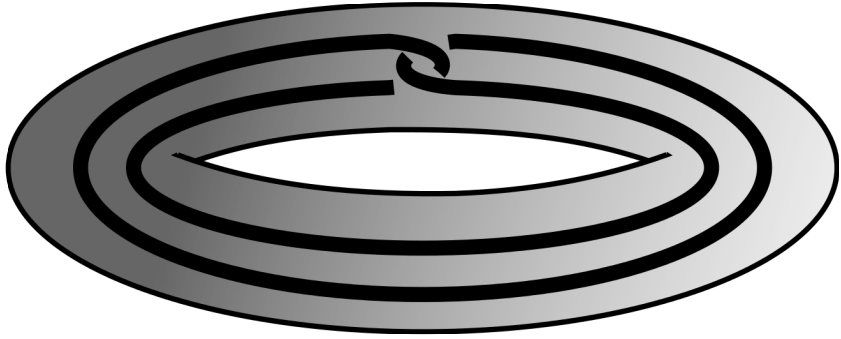}
\caption{The Whitehead continuum}
\label{fig:wh}
\end{figure}
\end{example}

Now we turn to analyze the (\v{C}ech) cohomology of toroidal sets. Suppose $K \subseteq \mathbb{R}^3$ is a toroidal set and let $\{T_i\}$ be a neighbourhood basis of $K$ comprised of nested solid tori. By nested we mean that $T_{i+1}$ is contained in the interior of $T_i$ for each $i$. Each inclusion $T_{i+1} \subseteq T_i$ induces a homomorphism $\alpha_i : H^1(T_i;\mathbb{Z}) \longrightarrow H^1(T_{i+1};\mathbb{Z})$. The groups $H^1(T_i;\mathbb{Z})$ are all isomorphic to $\mathbb{Z}$, so every $\alpha_i$ can be thought of as multiplication by an integer $w_i$ which is uniquely determined up to sign; in fact, suitably choosing the generators of $H^1(T_i;\mathbb{Z})$ we can assume that the $w_i$ are all nonnegative. We call $w_i$ the \emph{winding number} of $T_{i+1}$ inside $T_i$.

\begin{proposition} \label{prop:coh} A toroidal set is connected and has trivial \v{C}ech cohomology in degrees $q \geq 2$. As for its cohomology in degree one, the following alternative holds (the notation is the same as in the previous paragraph):
\begin{itemize}
	\item[(1)] Suppose infinitely many of the $w_i$ are zero. Then $\check{H}^1(K;\mathbb{Z}) = 0$.
	\item[(2)] Suppose that $w_i = 1$ from some $i_0$ on. Then $\check{H}^1(K;\mathbb{Z}) = \mathbb{Z}$.
	\item[(3)] Suppose that $w_i \geq 2$ for infinitely many $i$. Then $\check{H}^1(K;\mathbb{Z})$ is not finitely generated.
\end{itemize}
\end{proposition}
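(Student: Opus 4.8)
The plan is to compute $\check H^*(K;\mathbb Z)$ as the direct limit of the cohomology of the neighbourhood system $\{T_i\}$, using the continuity of \v Cech cohomology for compacta expressed as nested intersections of compact polyhedra: $\check H^q(K;\mathbb Z) = \varinjlim \left( H^q(T_i;\mathbb Z), \alpha_i^{(q)} \right)$, where $\alpha_i^{(q)}$ is induced by the inclusion $T_{i+1}\hookrightarrow T_i$. Since each $T_i$ is a solid torus, hence homotopy equivalent to $\mathbb S^1$, we have $H^0(T_i;\mathbb Z)=\mathbb Z$, $H^1(T_i;\mathbb Z)=\mathbb Z$, and $H^q(T_i;\mathbb Z)=0$ for $q\geq 2$. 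Connectedness of $K$ follows because it is a nested intersection of connected compacta (equivalently, $\check H^0(K;\mathbb Z)=\varinjlim\mathbb Z = \mathbb Z$ with all bonding maps the identity), and the vanishing of $\check H^q(K;\mathbb Z)$ for $q\geq 2$ is immediate since the direct limit of zero groups is zero. So the whole content of the proposition is the degree-one computation, which amounts to computing $\varinjlim\left(\mathbb Z \xrightarrow{\,w_0\,} \mathbb Z \xrightarrow{\,w_1\,} \mathbb Z \xrightarrow{\,w_2\,} \cdots\right)$, where I identify $\alpha_i^{(1)}$ with multiplication by $w_i$ after choosing generators; that this identification is legitimate is exactly the definition of the winding numbers given in the paragraph preceding the statement.

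The three cases are then elementary facts about direct limits of such a sequence of cyclic groups. In case (1), if $w_i=0$ for infinitely many $i$, then cofinally many bonding maps are zero, so every element of the limit is eventually killed and the limit is $0$. In case (2), if $w_i=1$ for $i\geq i_0$, the tail of the system is $\mathbb Z \xrightarrow{1}\mathbb Z\xrightarrow{1}\cdots$, whose limit is $\mathbb Z$, and passing to a cofinal subsystem does not change the limit, so $\check H^1(K;\mathbb Z)=\mathbb Z$. Case (3) is the one requiring a small argument: when $w_i\geq 2$ for infinitely many $i$, I claim $\varinjlim$ is not finitely generated. The cleanest way is to observe that the direct limit embeds in $\mathbb Q$: discarding an initial segment if necessary, assume all $w_i\geq 1$, and map the $i$-th copy of $\mathbb Z$ into $\mathbb Q$ by $n\mapsto n/(w_0 w_1\cdots w_{i-1})$; these maps are compatible with the $w_i$, so they induce an injection of $\varinjlim$ into $\mathbb Q$ whose image is the subgroup of rationals with denominators dividing some $w_0\cdots w_{i-1}$. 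Since infinitely many $w_i\geq 2$, the set of such denominators is infinite and unbounded in divisibility, so the image contains $1/d$ for arbitrarily large $d$; a nonzero finitely generated subgroup of $\mathbb Q$ is cyclic, generated by a single fraction $p/q$, hence its elements have bounded denominators — a contradiction. Therefore $\check H^1(K;\mathbb Z)$ is not finitely generated.

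I do not expect any serious obstacle here; the only points needing care are (a) citing correctly the continuity of \v Cech cohomology for an intersection of a nested sequence of compact polyhedra (solid tori), and (b) making sure the sign ambiguity in the $w_i$ is harmless — which it is, since multiplication by $w_i$ and by $-w_i$ give isomorphic bonding maps. The embedding-into-$\mathbb Q$ argument in case (3) is the part most worth spelling out, but it is routine. A cosmetic remark worth including: cases (1)–(3) are genuinely exhaustive up to discarding an initial segment, because if only finitely many $w_i$ vanish and only finitely many are $\geq 2$, then $w_i=1$ eventually, which is case (2).
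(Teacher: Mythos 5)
Your proposal is correct and follows essentially the same route as the paper: compute $\check{H}^{*}(K;\mathbb{Z})$ as the direct limit of the cohomology of the nested solid tori, observe connectedness from the nested intersection of continua, get vanishing in degrees $q\geq 2$ for free, and reduce the degree-one statement to the direct limit of $\mathbb{Z}\xrightarrow{w_1}\mathbb{Z}\xrightarrow{w_2}\cdots$. The only divergence is in case (3), where the paper argues abstractly (a finite generating set would live in some term of the system, and injectivity of the bonding maps would force them all to be surjective from that term on, i.e.\ $w_i=1$), whereas you embed the limit into $\mathbb{Q}$ via $n\mapsto n/(w_0\cdots w_{i-1})$ and invoke the fact that a finitely generated subgroup of $\mathbb{Q}$ is cyclic with bounded denominators; both arguments are routine and equally valid, and both share the same implicit reading of the alternative, namely that in case (3) all but finitely many $w_i$ are nonzero.
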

\begin{proof} Toroidal sets are connected because they are the intersection of a nested sequence of tori, which are compact and connected. The first \v{C}ech cohomology group of $K$ is isomorphic to the direct limit \begin{equation} \label{eq:coh} \check{H}^1(K;\mathbb{Z}) = \varinjlim\ \left\{ \xymatrix{\mathbb{Z} \ar[r]^{\cdot w_1} & \mathbb{Z} \ar[r]^{\cdot w_2} & \ldots \ar[r] & \mathbb{Z} \ar[r]^{\cdot w_k} & \mathbb{Z} \ar[r] & \ldots} \right\} \end{equation} With this description cases (1) and (2) should be clear. As for (3), suppose that $w_i \geq 2$ for infinitely many $i$ but $\check{H}^1(K;\mathbb{Z})$ were finitely generated. Then there would exist a finite family $z_1, \ldots, z_r$ contained in some term (say, the $i_0$th) of \eqref{eq:coh} whose images in the direct limit $\check{H}^1(K;\mathbb{Z})$ generate it; since each $\alpha_i$ is injective (because $w_i \neq 0$) this would imply that all the bonding maps for $i \geq i_0$ must be surjective and hence $w_i = 1$.

The same sequence as in \eqref{eq:coh}, now for $q \geq 2$, shows that $\check{H}^q(K) = 0$ in those degrees.
\end{proof}

When case (1) holds we shall say that $K$ is a \emph{homologically trivial} or, simply, \emph{trivial} toroidal set;  we call it \emph{nontrivial} otherwise.

\begin{example} (1) The Whitehead continuum $K$ of Example \ref{ex:whitehead} is a trivial toroidal set. Observe that the winding number of $T_1 = h(T_0)$ inside $T_0$ is zero, so the same holds for each $T_{i+1}$ inside the previous $T_i$ since all the pairs $(T_i,T_{i+1})$ are homeomorphic to each other (via $h$) by construction. In particular the bonding maps in \eqref{eq:coh} are all zero, so $K$ is indeed a trivial toroidal set.

(2) Any generalized solenoid $K$ has, in the notation of Example \ref{ex:solenoids}, winding numbers $w_i = n_i \geq 2$. Thus $\check{H}^1(K)$ is not finitely generated.

(3) The polyhedral knots and the wild knots of Example \ref{ex:wild} are all homeomorphic to $\mathbb{S}^1$; thus, they have $\check{H}^1(K) = \check{H}^1(\mathbb{S}^1) = \mathbb{Z}$ since \v{C}ech cohomology is a topological invariant.
\end{example}

So far no condition has been placed on the tori $T_i$ that comprise a neighbourhood basis of a toroidal set. We now make some considerations to show that the $T_i$ can always be chosen to satisfy certain convenient properties.

(i) A result of Moise \cite[Theorem 1, p. 253]{moise2} implies the following: given a compact $3$--manifold $N \subseteq \mathbb{R}^3$ and $\epsilon > 0$, there exists an embedding $h : N \longrightarrow \mathbb{R}^3$ that moves points less than $\epsilon$ and such that $h(N)$ is a polyhedron. A consequence of this result (letting $N = T_i$ and choosing $\epsilon$ so small that $h(T_i)$ is still a neighbourhood of $K$) is that a toroidal set admits a neighbourhood basis of solid \emph{polyhedral} tori.

(ii) Suppose that $K$ is a nontrivial toroidal set, so that $\check{H}^1(K) \neq 0$. A \emph{natural neighbourhood} of $K$ is a solid polyhedral torus $T$ such that the inclusion $K \subseteq T$ induces a nonzero map in $\check{H}^1$. We have the following result:

\begin{remark} \label{rem:natural} Let $K$ be a nontrivial toroidal set. Let $\{T_i\}$ be a neighbourhood basis of $K$ comprised of solid, polyhedral tori. Then, for $i$ big enough, each $T_i$ is a natural neighbourhood of $K$.
\end{remark}
\begin{proof} We only need to show that the inclusion $K \subseteq T_i$ induces a nonzero map in $\check{H}^1$ for big enough $i$. Consider the identity ${\rm id} : K \longrightarrow K$ as the inverse limit of the inclusions $j_i : K \subseteq T_i$. The direct limit of the induced maps $j_i^* : \check{H}^1(T_i) \longrightarrow \check{H}^1(K)$ is precisely the induced map ${\rm id}^* = {\rm id} : \check{H}^1(K) \longrightarrow \check{H}^1(K)$. If $j_i^*$ were trivial for a subsequence of the $\{T_i\}$, it would follow that ${\rm id} : \check{H}^1(K) \longrightarrow \check{H}^1(K)$ would also be trivial, showing that $\check{H}^1(K)$ itself is trivial in contradiction with the assumption that $K$ be nontrivial.
\end{proof}

A \emph{natural neighbourhood basis} of $K$ will mean a sequence $\{T_i\}$ of nested natural neighbourhoods of $K$. As a consequence of the above discussion, every nontrivial toroidal set has a natural neighbourhood basis. Natural neighbourhood bases have two properties that we will often use in the sequel without further explanation:

\begin{remark} \label{rem:natural1} Let $\{T_i\}$ be a natural neighbourhood basis of a nontrivial toroidal set $K$. Then:
\begin{itemize}
	\item[(i)] The winding number of each $T_{i+1}$ inside $T_i$ is nonzero.
	\item[(ii)] If $\check{H}^1(K;\mathbb{Z}) = \mathbb{Z}$, then for big enough $i$ the winding number of $T_{i+1}$ inside $T_i$ is precisely one.
\end{itemize}
\end{remark}
\begin{proof} (i) Since the inclusion $K \subseteq T_i$ factors through the inclusion $T_{i+1} \subseteq T_i$ and the former induces a nonzero map in $\check{H}^1$ because $T_i$ is a natural neighbourhood of $K$, the latter must also induce a nonzero map in $\check{H}^1$; that is, the winding number of $T_{i+1}$ inside $T_i$ must be nonzero.

(ii) This follows directly from our previous discussion about $\check{H}^1(K;\mathbb{Z})$.
\end{proof}

\section{The genus of a toroidal set} \label{sec:genus}

In this section we are going to define the genus of a toroidal set. We first recall very briefly some notions pertaining to classical knot theory. Further details can be found, for instance, in the books by Burde and Zieschang \cite{burdezieschang1}, Lickorish \cite{Lickorish1} or Rolfsen \cite{rolfsen1}.

Two knots $K$ and $K'$ are equivalent if there exists a homeomorphism $h:\mathbb{R}^3\longrightarrow\mathbb{R}^3$ such that $h(K)=K'$. In particular a knot $K$ is said to be \emph{unknotted} if it is equivalent to the standard $\mathbb{S}^1$ contained in the plane $z=0$ of $\mathbb{R}^3$.

Given a polyhedral knot $K\subseteq\mathbb{R}^3$ there exists an orientable, connected surface $S\subseteq\mathbb{R}^3$ whose boundary is precisely $K$. Such a surface is called a \emph{Seifert surface} of the knot. The \emph{genus} of $K$ is defined as the minimal genus of a Seifert surface of $K$. Clearly the genus is an invariant of the knot type; that is, if two knots are equivalent then they have the same genus. The only orientable, connected surface with genus zero is a disk; hence, a knot has genus zero if and only if it bounds a disk or, equivalently, if and only if it is the unknot.

Let $T$ be a solid torus and $h : \mathbb{S}^1 \times \mathbb{D}^2 \longrightarrow T \subseteq \mathbb{R}^3$ a specific homeomorphism between $T$ and $\mathbb{S}^1 \times \mathbb{D}^2$. Such a homeomorphism is called a \emph{framing} of $T$. As mentioned earlier in the previous section, without of loss generality we will always assume that solid tori $T \subseteq \mathbb{R}^3$ are polyhedral; similarly, it will also be convenient to think of $h$ as being piecewise linear with respect to the standard piecewise linear structure on $\mathbb{S}^1 \times \mathbb{D}^2$. The two simple closed curves $h(* \times \partial \mathbb{D}^2)$ and $h(\mathbb{S}^1 \times *)$, where $*$ denotes respectively a point in $\mathbb{S}^1$ or in $\partial \mathbb{D}^2$, are called a meridian and a longitude of $T$. Of course, they depend on the framing $h$. A solid torus $T$ that is given only as a point set admits many different framings; however, up to isotopy there are only two (depending on the orientation of the longitude) with the property that the associated longitude is nullhomologous in the complement of $T$. This is called the \emph{preferred framing} of $T$ (see \cite[Section 3.A, p. 30ff.]{burdezieschang1} or \cite[Section 2.E, p. 29ff.]{rolfsen1} for more details). The \emph{core} of $T$ is the simple closed curve $h(\mathbb{S}^1 \times 0)$, where $0$ is the center of the disk $\mathbb{D}^2$. Up to isotopy within $T$, and therefore also within $\mathbb{R}^3$, the core curve does not depend on $h$. Conversely, given a simple closed curve in $\mathbb{R}^3$, it is the core curve of any regular neighbourhood of itself, which is a solid torus uniquely determined up to isotopy. Thus it makes both heuristic and mathematical sense to consider $T$ and its core curve as equivalent as far as the study of the knottedness of $T$ is concerned, and from now on we shall freely apply definitions and techniques pertaining to knot theory to polyhedral solid tori. In particular, we shall say that a solid torus $T$ is unknotted if its core curve is unknotted. Similarly, we define the genus of $T$ as the genus of its core curve, and denote it by $g(T)$.

\begin{definition} Let $K \subseteq \mathbb{R}^3$ be a toroidal set. We shall say that $K$ is unknotted if it has a neighbourhood basis of unknotted solid tori.
\end{definition}



We define the genus of a toroidal set in the following rather natural way:

\begin{definition} Let $K \subseteq \mathbb{R}^3$ be a toroidal set. We define the \emph{genus} of $K$ as the minimum $g$ among $0,1,2,\ldots,\infty$ such that $K$ has arbitrarily small neighbourhoods that are polyhedral solid tori of genus $\leq g$.
\end{definition}

\begin{example} \label{ex:unknotted} Let $K \subseteq \mathbb{R}^3$ be a toroidal set. It follows directly from the definitions that $K$ is unknotted if, and only if, its genus is zero.
\end{example}

From its definition it is easy to bound the genus of a toroidal set from above, but not (in principle) to compute it exactly. The following theorem is very helpful in this regard:

\begin{theorem} \label{teo:compute_genus} Let $K \subseteq \mathbb{R}^3$ be a nontrivial toroidal set. Then its genus can be computed as \[g(K) = \lim_{i \rightarrow +\infty} g(T_i),\] where $\{T_i\}$ is any neighbourhood basis of $K$ comprised of nested, polyhedral solid tori.
\end{theorem}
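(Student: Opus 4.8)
The plan is to isolate one classical fact from knot theory and then obtain everything else by soft manipulations with nested solid tori. The fact I would use is the following: \emph{if $T'$ is a polyhedral solid torus contained in the interior of a polyhedral solid torus $T$ and the winding number of $T'$ in $T$ is nonzero, then $g(T')\ge g(T)$.} To see this, write $c$ and $c'$ for the cores of $T$ and $T'$, so that $g(T)=g(c)$ and $g(T')=g(c')$ by definition. We are in one of three situations: $c$ is unknotted, and there is nothing to prove; $c'$ is isotopic to $c$ inside $T$, and then $g(c')=g(c)$; or $c'$ is a genuine satellite knot with companion $c$, in which case $g(c')\ge g(c)$ by Schubert's inequality for the genus of satellite knots (see \cite{burdezieschang1}), the hypothesis that the winding number be nonzero being essential here. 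I regard this inequality as the heart of the matter; everything else is formal.

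Granting it, I would first show that \emph{the limit exists and does not depend on the basis.} Because $K$ is nontrivial, Remarks~\ref{rem:natural} and~\ref{rem:natural1} guarantee that for all large $i$ the torus $T_i$ is a natural neighbourhood of $K$ and the winding number of $T_{i+1}$ in $T_i$ is nonzero; the inequality above then shows that $\bigl(g(T_i)\bigr)$ is eventually non-decreasing, hence converges in $\{0,1,2,\dots,\infty\}$. If $\{S_i\}$ is a second nested polyhedral neighbourhood basis, interleave the two, choosing $S_{i_1}\supseteq T_{j_1}\supseteq S_{i_2}\supseteq T_{j_2}\supseteq\cdots$ with each term in the interior of the previous one. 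The interleaved family is again a nested polyhedral neighbourhood basis of $K$, so its genus sequence is eventually non-decreasing and convergent, and therefore its subsequences $\bigl(g(S_{i_k})\bigr)_k$ and $\bigl(g(T_{j_k})\bigr)_k$ — and hence $\bigl(g(S_i)\bigr)$ and $\bigl(g(T_i)\bigr)$ themselves — converge to one and the same value $L$.

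It remains to identify $L$ with $g(K)$. The inequality $g(K)\le L$ is immediate: it is trivial if $L=\infty$, and if $L<\infty$ then an eventually non-decreasing integer sequence with limit $L$ is eventually equal to $L$, so $K$ has arbitrarily small polyhedral solid torus neighbourhoods of genus $L$, whence $g(K)\le L$. For the reverse inequality, assume $g(K)=g_0<\infty$ (otherwise there is nothing to prove) and use the definition of $g(K)$ to build a nested polyhedral neighbourhood basis $\{T_k'\}$ of $K$ with $g(T_k')\le g_0$ for every $k$: having chosen $T_k'$, pick $T_{k+1}'$ inside $\operatorname{int}T_k'$, of genus $\le g_0$ and of diameter less than $1/(k+1)$ — this is exactly what ``arbitrarily small neighbourhoods of genus $\le g_0$'' provides. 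By the basis-independence just proved, the genus sequence of $\{T_k'\}$ converges to $L$ as well, and since every term is $\le g_0$ we obtain $L\le g_0=g(K)$. Together with $g(K)\le L$ this gives $L=g(K)$.

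The one genuine difficulty is the geometric inequality $g(T')\ge g(T)$, and the real decision is whether to invoke Schubert or to prove it. For a self-contained treatment I would take an incompressible minimal-genus Seifert surface $S$ for $c'$, isotope it to meet $\partial T$ transversally in the fewest possible circles, discard the circles that are inessential on $\partial T$ by the standard innermost-disc argument (using incompressibility of $S$ and irreducibility of $S^3$), and then observe that a homology computation in $H_1(T)\cong\mathbb{Z}$ and in $H_2(S^3\setminus\operatorname{int}T,\partial T)\cong\mathbb{Z}$ — this is exactly where $w\neq 0$ enters — forces the remaining circles to be preferred longitudes of $T$. The piece $S\cap(S^3\setminus\operatorname{int}T)$ then represents $|w|$ times the class of a Seifert surface of $c$ in the exterior of $c$, so its negative Euler characteristic is at least $|w|\,(2g(c)-1)$ (because minimal-genus Seifert surfaces are Thurston-norm minimizing, by Gabai's theorem), while $S\cap T$ has no disc or sphere components, again because $w\neq 0$; adding Euler characteristics yields $\chi(S)\le-|w|\,(2g(c)-1)$, i.e.\ $g(c')\ge g(c)$. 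The only non-routine ingredient there is the care required in the surface-surgery reductions.
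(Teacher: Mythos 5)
Your proof is correct and follows essentially the same route as the paper: the key input is Schubert's inequality for satellites (giving $g(T')\geq g(T)$ when the winding number is nonzero), from which eventual monotonicity of the genus sequence along natural neighbourhood bases and independence of the basis follow; your interleaving argument for independence and your explicit identification of the common limit with $g(K)$ are only minor variations on the paper's Proposition \ref{prop:limit} and the short deduction that follows it. The closing sketch of a self-contained proof of the Schubert inequality via the Thurston norm is sound but unnecessary, since the paper simply cites Burde--Zieschang for that inequality.
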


To prove the theorem we need to recall some results concerning satellite knots (tailored to our context of solid tori). Let $T$ and $T'$ be two polyhedral solid tori such that $T'$ is contained in the interior of $T$. Let $V := \mathbb{S}^1 \times \mathbb{D}^2 \subseteq \mathbb{R}^3$ be the standard unknotted solid torus and let $e : T \longrightarrow V$ be the inverse of a preferred framing. The torus $T'$ gets sent by $e$ onto a solid torus $e(T') \subseteq {\rm int}\ V$. We call the pair $(V,e(T'))$ the \emph{pattern} of $(T,T')$ and define its genus $g(T,T')$ as the genus of $e(T')$. The genus of $(T,T')$ is well defined. For, consider two maps $e,e' : T \longrightarrow V$ that are inverses of preferred framings. Then the composition $e' e^{-1} : V \longrightarrow V$ is a homeomorphism that preserves the standard longitude of $V$ up to isotopy (by the definition of preferred framing) and therefore admits an extension to all of $\mathbb{R}^3$. This extension sends $e(T')$ onto $e'(T')$, showing that both knots are equivalent and in particular have the same genus. We shall say that the pattern of the pair $(T,T')$ is trivial if $g(T,T') = 0$. This means that $e(T')$ has genus zero; that is, $e(T')$ unknots in $\mathbb{R}^3$ (notice, however, that this does not necessarily imply that $e(T')$ can be unknotted within $V$).

Now suppose that (a) $T'$ lies in a nontrivial way inside $T$, this meaning that there does not exist a $3$--cell $B$ such that $T' \subseteq B \subseteq T$, and (b) $T$ is not unknotted. Then (the core curve of) $T'$ is called a satellite knot and (the core curve of) $T$ is its companion knot (see for instance \cite[Definition 2.8, p. 19]{burdezieschang1}). A well known result of Schubert establishes the inequality $g(T') \geq w \cdot g(T) + g(T,T')$, where $w$ is the winding number of $T'$ inside $T$. For a proof, see \cite[Proposition 2.10, p. 21]{burdezieschang1} (there is a typographical error in the statement of the proposition, with the correct formula being given at the end of its proof, in p. 22). 

The following result is a straightforward consequence of the above definitions and the formula of Schubert:

\begin{lemma} Let $T', T \subseteq \mathbb{R}^3$ be two solid tori such that $T' \subseteq {\rm int}\ T$. Suppose that the winding number $w$ of $T'$ inside $T$ is nonzero. Then the genera of $T$, $T'$ and the pair $(T,T')$ satisfy the inequality \begin{equation} \label{eq:schubert} g(T') \geq w \cdot g(T) + g(T,T').\end{equation}
\end{lemma}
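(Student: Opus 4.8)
The plan is to reduce the inequality to Schubert's formula, recalled above, by a short case analysis on whether the companion torus $T$ is knotted, using the hypothesis $w\neq 0$ to guarantee that $T'$ is genuinely placed as a satellite in $T$.

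First I would record an elementary observation: if $w\neq 0$, then there is no $3$--cell $B$ with $T'\subseteq B\subseteq T$. Indeed, such a $B$ would make the inclusion $T'\hookrightarrow T$ factor through the contractible set $B$, so the induced map $H^1(T;\mathbb{Z})\longrightarrow H^1(T';\mathbb{Z})$ would factor through $H^1(B;\mathbb{Z})=0$ and hence vanish, contradicting that it is multiplication by $w\neq 0$. Thus $T'$ automatically lies in a nontrivial way inside $T$; that is, condition (a) preceding Schubert's inequality is fulfilled.

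Next I would split into two cases. If $T$ is not unknotted, then condition (b) holds as well, so (the core curve of) $T'$ is a satellite knot with companion (the core curve of) $T$, and the desired inequality \eqref{eq:schubert} is precisely the statement of Schubert's theorem as recalled above. If instead $T$ is unknotted, then $g(T)=0$ and the right-hand side of \eqref{eq:schubert} reduces to $g(T,T')$, so it suffices to prove $g(T')=g(T,T')$ (the inequality then holds, with equality). To this end: since $T$ is a regular neighbourhood of its core curve and that curve is unknotted, there is an ambient homeomorphism $H:\mathbb{R}^3\longrightarrow\mathbb{R}^3$ with $H(T)=V$, the standard solid torus. Because $H$ restricts to a homeomorphism of complements $\mathbb{R}^3\setminus T\longrightarrow\mathbb{R}^3\setminus V$, it carries the preferred longitude of $T$ (the one nullhomologous in $\mathbb{R}^3\setminus T$) onto a curve nullhomologous in $\mathbb{R}^3\setminus V$, i.e.\ onto the standard longitude of $V$; hence $e:=H|_T$ is the inverse of a preferred framing of $T$ and may legitimately be used to compute the pattern. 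Then $g(T,T')=g(e(T'))=g(H(T'))=g(T')$, the last equality because the genus of a solid torus (the genus of its core knot) is invariant under ambient homeomorphisms of $\mathbb{R}^3$.

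The only step I expect to require some care is the unknotted-companion case, specifically the verification that the ambient homeomorphism $H$ can be taken so as to respect preferred framings, so that it really computes $g(T,T')$; once this is pinned down the argument closes at once. Everything else is a direct appeal to the definitions of pattern and winding number together with Schubert's inequality stated above.
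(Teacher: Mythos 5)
Your proof is correct and follows essentially the same route as the paper: the cohomological factoring argument to rule out a $3$--cell between $T'$ and $T$, Schubert's inequality when $T$ is knotted, and a direct verification that $g(T') = g(T,T')$ when $T$ is unknotted. The only cosmetic difference is in the unknotted case, where the paper extends the preferred-framing inverse $e$ to an ambient homeomorphism of $\mathbb{R}^3$, whereas you start from an ambient homeomorphism carrying $T$ onto $V$ and check that its restriction is a legitimate preferred-framing inverse; these are the same argument read in opposite directions.
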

\begin{proof} The assumption that $w$ is nonzero implies that $T'$ lies in a nontrivial way inside $T$ (otherwise the inclusion $T' \subseteq T$ would factor through a $3$--cell $B$ and the induced map in cohomology would factor through $\check{H}^1(B) = 0$, yielding the zero map). If $T$ is not the unknot, then (a) and (b) are satisfied, $T'$ is a satellite knot with companion $T$, and Equation \eqref{eq:schubert} is just the inequality of Schubert mentioned above.

Now suppose that $T$ is the unknot. By definition we cannot any longer properly speak of a satellite knot because condition (b) is violated. However, Equation \eqref{eq:schubert} remains valid since it reduces to $g(T') \geq g(T,T')$, which we can show directly to be true as follows. Let $e : T \longrightarrow V$ be the inverse of a preferred framing, so that $(V,e(T'))$ is the pattern of $(T,T')$ and therefore $g(T,T')$ is the genus of $e(T')$ by definition. Since both $T$ and $V$ are the unknot, $e$ has an extension to all of $\mathbb{R}^3$, say $\hat{e} : \mathbb{R}^3 \longrightarrow \mathbb{R}^3$ (again, we have implicitly used that $e$ sends the standard longitude of $T$ onto the standard longitude of $V$ because it is the inverse of a preferred framing). This $\hat{e}$ sends $T'$ onto $\hat{e}(T') = e(T')$, so both have the same genus. Thus $g(T') = g(T,T')$ as was to be shown.
\end{proof}

\begin{proposition} \label{prop:limit} Let $K$ be a nontrivial toroidal set and let $\{T_i\}$ be a natural neighbourhood basis for $K$. Denote by $g_i$ the genus of $T_i$. Then:
\begin{itemize}
	\item[(i)] The sequence $\{g_i\}$ is nondecreasing.
	\item[(ii)] The limit of the sequence $\{g_i\}$ (possibly $+\infty$) is independent of the particular neighbourhood basis $\{T_i\}$ chosen to compute it.
\end{itemize}
\end{proposition}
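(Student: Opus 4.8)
The plan is to establish (i) first and then derive (ii) from it by a standard interleaving argument. For (i), fix consecutive tori $T_{i+1} \subseteq \operatorname{int}\ T_i$ from the natural neighbourhood basis. By Remark \ref{rem:natural1}(i) the winding number $w$ of $T_{i+1}$ inside $T_i$ is nonzero, so the previous lemma applies and gives $g_{i+1} = g(T_{i+1}) \geq w \cdot g(T_i) + g(T_i,T_{i+1}) \geq g(T_i) = g_i$, using $w \geq 1$ and $g(T_i,T_{i+1}) \geq 0$. Hence $\{g_i\}$ is nondecreasing and its limit in $\{0,1,2,\ldots,\infty\}$ exists; call it $g$.

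For (ii), let $\{T_i\}$ and $\{T_i'\}$ be two natural neighbourhood bases of $K$, with limits $g$ and $g'$ respectively. Because each family is a neighbourhood basis, I can pass to subsequences and interleave them: there are indices so that, after relabelling, $T_{i_1}' \subseteq \operatorname{int}\ T_{j_1} \subseteq \operatorname{int}\ T_{i_2}' \subseteq \operatorname{int}\ T_{j_2} \subseteq \cdots$, i.e. the two families alternate as nested solid tori, all of them still natural neighbourhoods of $K$ (by Remark \ref{rem:natural}, shrinking if necessary). Now I want to apply (i) — or rather its underlying inequality — along this mixed chain. The point is that the lemma, together with Remark \ref{rem:natural1}(i), guarantees monotonicity of the genus along \emph{any} nested pair of natural neighbourhoods, not just along pairs coming from a single fixed basis; so the interleaved chain has nondecreasing genera. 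Its subsequence $g(T_{i_1}'), g(T_{i_2}'), \ldots$ converges to $g'$ and its subsequence $g(T_{j_1}), g(T_{j_2}), \ldots$ converges to $g$, and since both are subsequences of one nondecreasing sequence they have the same limit; therefore $g = g'$.

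The one technical point to be careful about is that the interleaving argument requires the inequality $g(T') \geq g(T)$ whenever $T' \subseteq \operatorname{int}\ T$ are two natural neighbourhoods of $K$, even if they come from different bases. This is exactly what the preceding lemma gives once we know the winding number of $T'$ in $T$ is nonzero, and that follows from the fact that the inclusion $K \subseteq T'$ factors through $T' \subseteq T$ and induces a nonzero map on $\check{H}^1$ (since $T'$ is a natural neighbourhood), forcing $T' \subseteq T$ to induce a nonzero map as well — the same reasoning as in Remark \ref{rem:natural1}(i). I expect this bookkeeping — verifying that every torus appearing in the interleaved chain is genuinely a natural neighbourhood so that the lemma is applicable at each step — to be the only real obstacle; the rest is the routine ``two nested cofinal sequences share a limit'' observation.
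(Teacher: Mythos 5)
Your proof is correct and follows essentially the same route as the paper: Schubert's inequality \eqref{eq:schubert} gives monotonicity of the genus along any nested pair of natural neighbourhoods with nonzero relative winding number, and a cofinality/interleaving argument between the two bases then yields (ii). One small slip in your justification of the cross-basis winding number: for $T' \subseteq {\rm int}\ T$ it is the naturality of the \emph{outer} torus $T$ that matters (the composite $\check{H}^1(T) \to \check{H}^1(T') \to \check{H}^1(K)$ is nonzero, forcing the first map to be nonzero), not that of $T'$ as your parenthetical suggests; the paper instead sandwiches some $T_k \subseteq {\rm int}\ T'_j \subseteq {\rm int}\ T_{i_0}$ and uses multiplicativity of winding numbers, but your more direct argument works equally well.
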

\begin{proof} (i) Since $\{T_i\}$ is a natural neighbourhood basis, the winding number $w_i$ of each $T_{i+1}$ inside $T_i$ satisfies $w_i \geq 1$. It then follows directly from Equation \eqref{eq:schubert} that $g_{i+1} \geq w_i \cdot g_i \geq g_i$.

(ii) Let $\{T'_j\}$ be another natural neighbourhood basis for $K$. It will be enough to show that for each $i_0$ there exists $j_0$ such that $g(T'_j) \geq g(T_{i_0})$ for every $j \geq j_0$: this implies that $\lim_{j \rightarrow +\infty} g(T'_j) \geq g(T_{i_0})$ and then letting $i_0 \rightarrow +\infty$ yields $\lim_{j \rightarrow +\infty} g(T'_j) \geq \lim_{i \rightarrow +\infty} g(T_i)$. The reverse inequality follows by interchanging the roles of $\{T_i\}$ and $\{T'_j\}$.

Fix, then, $i_0$. Choose $j_0$ so big that $T'_j \subseteq {\rm int}\ T_{i_0}$ for every $j \geq j_0$. Now fix $j \geq j_0$ and choose $k$ such that $T_k \subseteq {\rm int}\ T'_j$. The winding number of $T_k$ inside $T_{i_0}$ is the product of the winding numbers of $T_k$ inside $T'_j$ and of $T'_j$ inside $T_{i_0}$. Since $\{T_i\}$ is a natural neighbourhood basis the former is nonzero (by Remark \ref{rem:natural1}); thus, the latter are also nonzero. Then Equation \eqref{eq:schubert} implies that $g(T'_j) \geq g(T_{i_0})$, as was to be shown.
\end{proof}

Theorem \ref{teo:compute_genus} is a direct consequence of Remark \ref{rem:natural} and Proposition \ref{prop:limit}.(ii).

\begin{example} \label{ex:knots} Regard a polyhedral knot $K \subseteq \mathbb{R}^3$ as a toroidal set. To compute its genus as a toroidal set, let $T_i$ be progressively smaller regular neighbourhoods of $K$ and observe that all the $T_i$ have the same genus, which is precisely the genus of $K$ as a knot. These $\{T_i\}$ form a natural neighbourhood basis for $K$, so from Theorem \ref{teo:compute_genus} we see that the genus of $K$ as a toroidal set and as a knot coincide.
\end{example}

\begin{example} \label{ex:wild1} Consider again the infinite connected sum of polyhedral knots described in Example \ref{ex:wild}. The tori $\{T_i\}$ described there form a natural neighbourhoood basis of $K$. The core curve of $T_i$ is the connected sum $K_1 \# \ldots \# K_i$, and so its genus is $g(T_i) = \sum_1^i g(K_i)$ because genus is additive. It follows from Theorem \ref{teo:compute_genus} that the genus of $K$ is $\sum_1^{\infty} g(K_i)$. Notice that this is finite if and only if $g(K_i) = 0$ for big enough $i$; that is, if and only if the $K_i$ are unknotted from some $i_0$ onwards.
\end{example}

\begin{theorem} \label{teo:fin_gen} Let $K$ be a nontrivial toroidal set having finite genus. Then:
\begin{itemize}
	\item[(i)] There exist a natural neighbourhood $T$ of $K$ and an embedding $e : T \longrightarrow \mathbb{R}^3$ that unknots $K$; that is, such that $e(K) \subseteq \mathbb{R}^3$ is an unknotted toroidal set.
	\item[(ii)] If $\check{H}^1(K;\mathbb{Z}) \neq \mathbb{Z}$, then $K$ is unknotted.
\end{itemize}
\end{theorem}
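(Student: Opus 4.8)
The plan is to extract everything from how Schubert's inequality \eqref{eq:schubert} behaves along a natural neighbourhood basis. Since $K$ is nontrivial it has a natural neighbourhood basis $\{T_i\}$; write $g_i:=g(T_i)$ and $g:=g(K)$. By Proposition \ref{prop:limit}(i) the sequence $(g_i)$ is nondecreasing, and by Theorem \ref{teo:compute_genus} it converges to $g$; as $g<\infty$ there is $i_0$ with $g_i=g$ for all $i\geq i_0$. For every $i$ the winding number $w_i$ of $T_{i+1}$ in $T_i$ is $\geq 1$ (Remark \ref{rem:natural1}(i)), so \eqref{eq:schubert} gives
\[ g=g_{i+1}\ \geq\ w_i\,g_i+g(T_i,T_{i+1})\ =\ w_i\,g+g(T_i,T_{i+1})\qquad (i\geq i_0). \]
When $g\geq 1$ this forces $w_i\,g\leq g$, hence $w_i=1$, and then $g(T_i,T_{i+1})=0$, for every $i\geq i_0$.

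With this, part (ii) is immediate: if $\check{H}^1(K;\mathbb{Z})\neq\mathbb{Z}$ and we had $g\geq 1$, then $w_i=1$ for all $i\geq i_0$, so every bonding map in the direct limit \eqref{eq:coh} beyond the $i_0$-th term is an isomorphism and $\check{H}^1(K;\mathbb{Z})=\mathbb{Z}$, a contradiction. Hence $g=0$, i.e.\ $K$ is unknotted by Example \ref{ex:unknotted}.

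For part (i) we may assume $g\geq 1$, otherwise $K$ is already unknotted and $T=T_{i_0}$, $e={\rm id}$ works; so $w_i=1$ and $g(T_i,T_{i+1})=0$ for $i\geq i_0$. Put $T:=T_{i_0}$ (a natural neighbourhood) and let $e:T_{i_0}\longrightarrow V$ be the inverse of a preferred framing, $V$ the standard solid torus. This $e$ is an embedding of $T$ into $\mathbb{R}^3$, it is a homeomorphism onto the neighbourhood $V$ of $e(K)$, so the sets $W_i:=e(T_i)$, $i\geq i_0$, form a neighbourhood basis of $e(K)$ by solid tori, and $e(K)\cong K$ is not cellular because $\check{H}^1\neq 0$; thus $e(K)$ is toroidal, and it only remains to see that each $W_i$ is unknotted. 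I would prove this by induction on $i$, carrying along the auxiliary statement that $e|_{T_i}:T_i\to W_i$ sends the preferred longitude of $T_i$ to the preferred longitude of $W_i$. For $i=i_0$ this holds since $W_{i_0}=V$ and $e$ is a preferred framing. For the inductive step, $W_i$ being unknotted we may pick a homeomorphism $h:W_i\to V$ that is the inverse of a preferred framing of $W_i$, and (again since $W_i$ is unknotted) $h$ extends to a homeomorphism $\bar h$ of $\mathbb{R}^3$; by the inductive hypothesis the composite $h\circ e|_{T_i}:T_i\to V$ carries the preferred longitude of $T_i$ to the standard longitude of $V$, hence is itself the inverse of a preferred framing of $T_i$, so $h(W_{i+1})=(h\circ e|_{T_i})(T_{i+1})$ represents the pattern of $(T_i,T_{i+1})$, which is trivial; therefore $h(W_{i+1})$ has genus $0$, and applying the ambient homeomorphism $\bar h^{-1}$ we conclude $W_{i+1}$ is unknotted. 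To keep the induction running one must check that $e|_{T_{i+1}}$ still preserves preferred longitudes; this is the one genuinely topological input, and it comes from the following fact about winding number $1$: when the winding number of $T_{i+1}$ inside $T_i$ equals $1$, the inclusion $\partial T_{i+1}\hookrightarrow T_i\setminus{\rm int}\,T_{i+1}$ induces an isomorphism on $H_1$, under which the class of the preferred longitude of $T_{i+1}$ coincides with that of the preferred longitude of $T_i$. Transporting this identity through the homeomorphism $e|_{T_i}$ (which preserves winding numbers and, by hypothesis, the preferred longitude of $T_i$) pins down $e$ of the preferred longitude of $T_{i+1}$, up to isotopy on $\partial W_{i+1}$, as the preferred longitude of $W_{i+1}$.

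The step I expect to be the main obstacle is precisely this homological lemma about winding number $1$, i.e.\ that the ``preferred longitude'' datum descends coherently through the whole tower; once it is in place the rest of part (i) is just the definition of a pattern together with the standard fact that preferred framings of unknotted solid tori extend to $\mathbb{R}^3$, and both parts then rest on the single application of Schubert's inequality in the first paragraph. One could try to avoid carrying the longitude hypothesis by standardizing the $W_i$ one at a time inside $V$ via homeomorphisms of $\mathbb{R}^3$ supported near each $W_i$ and passing to a limit, but those homeomorphisms would again have to be chosen compatibly with the preferred framings, so the same point resurfaces.
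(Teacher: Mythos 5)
Your part (ii) is correct and is essentially the paper's argument: the finiteness of the genus stabilizes $g_i$, Schubert's inequality forces all winding numbers past $i_0$ to be $1$ whenever $g_{i_0}>0$, and then $\check{H}^1(K;\mathbb{Z})=\mathbb{Z}$, a contradiction. Part (i), however, has a genuine gap exactly where you flag it. Your induction hinges on the unproved lemma that $e|_{T_{i+1}}$ carries the preferred longitude of $T_{i+1}$ to the preferred longitude of $W_{i+1}$, which you try to extract from a homological statement about $\partial T_{i+1}\hookrightarrow T_i\setminus{\rm int}\,T_{i+1}$ in the winding-number-one case. As written that statement is not justified (it is not even clear that the inclusion induces an isomorphism on $H_1$, let alone that it matches up the two preferred longitudes), and without it the inductive step collapses: knowing only that $W_i$ is unknotted and that $g(T_i,T_{i+1})=0$ does not by itself tell you that $e(T_{i+1})$ realizes the pattern of $(T_i,T_{i+1})$, since $e|_{T_i}$ need not be the inverse of a preferred framing of $T_i$.

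The paper avoids this entirely by applying Schubert's inequality not to consecutive pairs but to the pairs $(T_{i_0},T_i)$ for each $i>i_0$. This is legitimate because the winding number of $T_i$ inside $T_{i_0}$ is the (nonzero) product $w_{i_0}\cdots w_{i-1}$, so $g_{i_0}=g_i\geq g_{i_0}+g(T_{i_0},T_i)$ and hence $g(T_{i_0},T_i)=0$ for every $i$. Now recall that $g(T_{i_0},T_i)$ is \emph{defined} as the genus of $e(T_i)$ where $e$ is the inverse of a preferred framing of $T_{i_0}$ --- precisely your map $e$. So $g(T_{i_0},T_i)=0$ says directly that every $e(T_i)$ is unknotted, with no induction, no longitude bookkeeping, and no need to separate the case $g=0$. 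I would recommend replacing your inductive argument for (i) by this one-step application of Schubert to $(T_{i_0},T_i)$; alternatively, if you want to keep your route, you must actually prove the longitude-coherence lemma, which is a nontrivial piece of satellite-knot homology that the statement of the theorem does not require.
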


Intuitively, part (i) of the theorem means that a toroidal set with finite genus becomes unknotted as soon as one unknots one of its natural neighbourhoods. The converse to this is false, as we illustrate in Example \ref{ex:knotted_sol} below. Also, notice that part (ii) implies that a generalized solenoid with finite genus must be unknotted.

\begin{proof} Let $\{T_i\}$ be a natural neighbourhood basis of $K$ and denote by $g_i$ the genus of $T_i$. Since the genus of $K$ is finite, it follows from Proposition \ref{prop:limit}.(i) that the sequence $\{g_i\}$ eventually becomes constant; that is, there exists $i_0$ such that $g_i = g_{i_0}$ for every $i \geq i_0$.

(i) Set $T := T_{i_0}$ and let $e$ be the inverse of a preferred framing for $T$. Let $i > i_0$. The winding number $w$ of $T_i$ inside $T_{i_0}$ is nonzero by Remark \ref{rem:natural1}. Hence we may apply Equation \eqref{eq:schubert} to the pair $(T_{i_0},T_i)$ and write $g_{i_0} = g_i \geq w \cdot g_{i_0} + g(T_{i_0},T_i)$. Since $w \geq 1$, it follows that $g_{i_0} \geq g_{i_0} + g(T_{i_0},T_i)$, whence $g(T_{i_0},T_i) = 0$. This means that the genus of $e(T_i)$ is zero. Thus $e(K)$ has the natural neighbourhood basis $\{e(T_i)\}_{i \geq i_0}$ all of whose members are unknotted; that is, $e(K)$ is unknotted.

(ii) We reason by contradiction. Suppose that there exists $i \geq i_0$ such that $g_i > 0$. Then for any $j \geq i$ we have, using Equation \eqref{eq:schubert} repeatedly, $g_i = g_{j+1} \geq w_j \cdot w_{j-1} \cdot \ldots \cdot w_{i+1} \cdot w_i \cdot g_i$ where as usual $w_j$ denotes the winding number of $T_{j+1}$ inside $T_j$. Since $g_i > 0$, it follows that $w_j \cdot \ldots \cdot w_{i+1} \cdot w_i \leq 1$ and, since none of these winding numbers is zero because each $T_i$ is a natural neighbourhood of $K$, they must all be equal to one. This implies that $\check{H}^1(K;\mathbb{Z}) = \mathbb{Z}$, contradicting the assumption. Thus $g_i = 0$ for every $i \geq i_0$, so all those $T_i$ are unknotted and so is $K$ by definition.
\end{proof}

Regarding unknotted toroidal sets, the characterizations given in the following proposition will be useful later on. Part (iii) is the analogue of the foundational result in knot theory which says that a knot $K$ is trivial if, and only if, the fundamental group of its complement $\mathbb{R}^3 - K$ is $\mathbb{Z}$.

\begin{proposition} \label{prop:unknotted} Let $K$ be a nontrivial toroidal set. Then the following are equivalent:
\begin{itemize}
	\item[(i)] $K$ is unknotted.
	\item[(ii)] Every natural neighbourhood of $K$ is unknotted.
	\item[(iii)] The fundamental group $\pi_1(\mathbb{R}^3 - K)$ is Abelian.
\end{itemize}
\end{proposition}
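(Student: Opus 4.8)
The plan is to prove the three implications (i)$\Rightarrow$(ii)$\Rightarrow$(iii)$\Rightarrow$(i) in a cycle, using the genus machinery already developed for the first two and a more delicate inverse-limit argument for the last.

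\textbf{The implication (ii)$\Rightarrow$(i).} This is immediate: by Remark \ref{rem:natural} every sufficiently small member of a polyhedral neighbourhood basis of $K$ is a natural neighbourhood, so if all natural neighbourhoods are unknotted then $K$ has arbitrarily small unknotted solid torus neighbourhoods, i.e.\ $g(K)=0$ and $K$ is unknotted by Example \ref{ex:unknotted}.

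\textbf{The implication (i)$\Rightarrow$(ii).} Here I would use Schubert's inequality \eqref{eq:schubert}. Suppose $K$ is unknotted, so $g(K)=0$, and let $T$ be any natural neighbourhood of $K$; I want $g(T)=0$. Pick a natural neighbourhood basis $\{T_i\}$ of $K$ with $T_0 = T$ (one can always arrange this: take a natural neighbourhood basis and insert $T$ at the top, shrinking so that $T_1\subseteq\operatorname{int} T$; the winding numbers stay nonzero because $T$ is a natural neighbourhood, cf.\ Remark \ref{rem:natural1}). Since $g(K)=0$, Proposition \ref{prop:limit}.(i) forces $g(T_i)=0$ for all large $i$, and because the sequence $\{g(T_i)\}$ is nondecreasing and starts at $g(T_0)=g(T)$, in fact $g(T_i)=0$ for \emph{all} $i$, in particular $g(T)=0$. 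So $T$ is unknotted.

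\textbf{The implication (i)$\Rightarrow$(iii) and (iii)$\Rightarrow$(i).} For (i)$\Rightarrow$(iii): if $K$ is unknotted it has a neighbourhood basis $\{T_i\}$ of unknotted solid tori, and $\mathbb{R}^3-K = \bigcup_i(\mathbb{R}^3-T_i)$ is an increasing union of open sets each of which is the complement of an unknotted solid torus, hence homotopy equivalent to a circle with abelian $\pi_1\cong\mathbb{Z}$; moreover the bonding inclusions $\mathbb{R}^3-T_i\hookrightarrow\mathbb{R}^3-T_{i+1}$ induce maps $\mathbb{Z}\to\mathbb{Z}$, so $\pi_1(\mathbb{R}^3-K)=\varinjlim \mathbb{Z}$ is abelian. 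The converse (iii)$\Rightarrow$(i) is the substantive part and is the classical-knot-theory analogue referred to in the statement. I would argue contrapositively: if $K$ is knotted then $g(K)>0$, so for large $i$ in a natural neighbourhood basis the solid torus $T_i$ is a nontrivial knot (its core is a satellite or genuinely knotted companion), and the inclusion $\mathbb{R}^3-T_i\hookrightarrow\mathbb{R}^3-K$ is $\pi_1$-injective onto a subgroup that is already nonabelian (the knot group of a nontrivial knot is nonabelian by the Papakyriakopoulos/Dehn-lemma circle of ideas). Passing to the direct limit, $\pi_1(\mathbb{R}^3-K)$ contains a nonabelian subgroup and hence is nonabelian. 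The main obstacle is making the $\pi_1$-injectivity of $\mathbb{R}^3-T_i\hookrightarrow\mathbb{R}^3-K$ precise and verifying that a nontrivial knot is detected already at a finite stage: this requires knowing that the meridian-longitude pair generates a nonabelian subgroup of the knot group of a nontrivial companion and that this subgroup survives into the limit, which one can control using that all winding numbers in a natural neighbourhood basis are nonzero so no stage collapses the relevant loops. I expect this step to need the most care; the other implications are essentially bookkeeping with the genus results already in hand.
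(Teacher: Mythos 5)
Your argument is correct and follows essentially the same route as the paper: (i)$\Leftrightarrow$(ii) via Schubert's inequality and the genus machinery, (i)$\Rightarrow$(iii) by writing $\pi_1(\mathbb{R}^3-K)$ as a direct limit of copies of $\mathbb{Z}$, and (iii)$\Rightarrow$(i) via $\pi_1$--injectivity of the complements along a natural neighbourhood basis. The injectivity step you flag as needing the most care is exactly what the paper disposes of by citing the satellite theorem \cite[Theorem 9, p. 113]{rolfsen1}: a nonzero winding number makes each $T_{i+1}$ a satellite of $T_i$, so $\pi_1(\mathbb{R}^3-T_i)\longrightarrow\pi_1(\mathbb{R}^3-T_{i+1})$ is injective, and injectivity persists into the direct limit.
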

\begin{proof} (i) $\Rightarrow$ (ii) Let $T$ be a natural neighbourhood of $K$ and let $T'$ be an unknotted solid torus that is a neighbourhood of $K$ contained in the interior of $T$. The inclusion $K \subseteq T$ factors through the inclusion $T' \subseteq T$, so it follows that the latter induces a nonzero homomorphism in cohomology. In particular $T'$ winds at least once around $T$, so by Equation \eqref{eq:schubert} we have $g(T') \geq g(T)$. However $g(T') = 0$ because $T'$ is unknotted, so $g(T) = 0$ and $T$ is unknotted too.

(ii) $\Rightarrow$ (iii) Let $\{T_i\}$ be a natural neighbourhood basis of $K$ consisting of polyhedral tori (this exists by Remark \ref{rem:natural}). Since $\mathbb{R}^3 - K$ is the ascending union of the open sets $\mathbb{R}^3 - T_i$, it follows from well known properties of the fundamental group that $\pi_1(\mathbb{R}^3 - K)$ is the direct limit \begin{equation} \label{eq:dirlim} \pi_1(\mathbb{R}^3 - K) = \varinjlim\ \left\{ \xymatrix{\pi_1(\mathbb{R}^3 - T_1) \ar[r] & \pi_1(\mathbb{R}^3 - T_2) \ar[r] & \ldots } \right\} \end{equation} where the unlabeled arrows denote inclusion induced homomorphisms. By assumption each $T_i$ is unknotted, and so $\pi_1(\mathbb{R}^3 - T_i) = \mathbb{Z}$. Equation \eqref{eq:dirlim} then exhibits $\pi_1(\mathbb{R}^3 - K)$ as the direct limit of a direct sequence of Abelian groups; hence, $\pi_1(\mathbb{R}^3 - K)$ must be Abelian itself.

(iii) $\Rightarrow$ (i) As in the previous paragraph, let $\{T_i\}$ be a natural neighbourhood basis of $K$ and write $\pi_1(\mathbb{R}^3 - K)$ as the direct limit in Equation \eqref{eq:dirlim}. Suppose that $\pi_1(\mathbb{R}^3-K)$ is Abelian. Since $K$ is nontrivial, the winding number of each $T_{i+1}$ inside $T_i$ is nonzero (that is, each $T_{i+1}$ is a satellite of $T_i$), which by \cite[Theorem 9, p. 113]{rolfsen1} entails that the inclusion induced homomorphism $\pi_1(\mathbb{R}^3 - T_i) \longrightarrow \pi_1(\mathbb{R}^3 - T_{i+1})$ is injective for every $i$. It follows from Equation \eqref{eq:dirlim} that the same is true of the inclusion induced homomorphism $\pi_1(\mathbb{R}^3-T_i) \longrightarrow \pi_1(\mathbb{R}^3 - K)$. Since the latter group is Abelian by assumption, $\pi_1(\mathbb{R}^3 - T_i)$ is also Abelian and thus $T_i$ is unknotted by the classical result in knot theory mentioned above. Thus $K$, having a neighbourhood basis of unknotted solid tori, is unknotted too.
\end{proof}

The techniques used in the proof have some overlap with those in \cite[Lemma~2.2, p. 1550069-5]{Conner1}, where the authors undertake a detailed study of the fundamental group of the complement of generalized solenoids.

\begin{example} \label{ex:knotted_sol} We saw earlier (Theorem \ref{teo:fin_gen}.(i)) that a nontrivial toroidal set with finite genus can be reembedded (together with one of its neighbourhoods) in an unknotted fashion in $\mathbb{R}^3$. This example shows that the converse is not true.

Consider the standard dyadic solenoid $K'$ in $\mathbb{R}^3$ constructed as an intersection of a nested sequence of unknotted solid tori $\{V_i\}_{i \geq 0}$ each of which winds twice inside the previous one. Let $h : V_0 \longrightarrow \mathbb{R}^3$ be an embedding of $V_0$ into $\mathbb{R}^3$ knotted in some nontrivial way. We claim that $K := h(K')$ has infinite genus, although obviously by construction it satisfies the property stated in Theorem \ref{teo:fin_gen}.(i) (setting $e := h^{-1}$). We reason by contradiction. If $K$ had finite genus, since $\check{H}^1(K;\mathbb{Z}) = \check{H}^1(K';\mathbb{Z})$ is the dyadic rationals (in particular, it is neither $0$ nor $\mathbb{Z}$), it would be unknotted by Theorem \ref{teo:fin_gen}.(ii). But then, by Proposition \ref{prop:unknotted}, every natural neighbourhood of $K$ should be unknotted. Clearly $h(V_0)$ is one such neighbourhood, but it is not unknotted by construction. Thus the genus of $K$ must be infinite.
\end{example}

Let $K$ be a nontrivial toroidal set having finite genus. Since $K$ is nontrivial, $\check{H}^1(K;\mathbb{Z})$ is either not finitely generated or isomorphic to $\mathbb{Z}$. According to Theorem \ref{teo:fin_gen}.(ii), in the first case $K$ is unknotted. Now we devote a few lines to show that in the second case the classical Alexander polynomial from knot theory can be defined in a natural way for $K$. The following results will not be used elsewhere in the paper.

We denote the Alexander polynomial of a knot $K$ by $\Delta_K(t)$. Recall that the Alexander polynomial is an element in the polynomial ring $\mathbb{Z}[t,t^{-1}]$ and is defined only up to multiplication by a unit; that is, by a factor of the form $\pm t^n$. The Alexander polynomial of a polyhedral solid torus $T$ is defined as the Alexander polynomial of its core curve.

For the following proposition we will make use of the relation between the Alexander polynomials of a satellite, its companion, and its pattern \begin{equation} \label{eq:alex} \Delta_{\text{satellite}}(t) \stackrel{\cdot}{=} \Delta_{\text{pattern}}(t) \cdot \Delta_{\text{companion}}(t^w),\end{equation} where $w$ is the winding number of the satellite inside a tubular neighbourhood of its companion and $\stackrel{\cdot}{=}$ means ``equal up to multiplication by a unit''; that is, a factor $\pm t^n$.

\begin{theorem} \label{teo:alex} Let $K$ be a nontrivial toroidal set with finite genus and $\check{H}^1(K;\mathbb{Z}) = \mathbb{Z}$. There exists a polynomial $\Delta(t) \in \mathbb{Z}[t,t^{-1}]$ such that for every neighbourhood basis $\{T_i\}$ of $K$ consisting of solid polyhedral tori, the Alexander polynomials of the $T_i$ are all equal to $\Delta(t)$ (up to multiplication by units) for big enough $i$.
\end{theorem}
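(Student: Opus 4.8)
The plan is to fix one convenient neighbourhood basis of $K$, use it to pin down the candidate polynomial $\Delta(t)$, and then show that this polynomial is independent of the basis chosen. Let $\{T_i\}$ be a neighbourhood basis of $K$ by polyhedral solid tori, which (as elsewhere in the paper) we take to be nested; by Remark~\ref{rem:natural}, after discarding finitely many terms I may assume $\{T_i\}$ is a natural neighbourhood basis of $K$. Since $K$ has finite genus, Proposition~\ref{prop:limit}.(i) shows that $g_i := g(T_i)$ is nondecreasing and eventually constant, say $g_i = g$ for $i \geq i_0$; and since $\check H^1(K;\mathbb Z) = \mathbb Z$, Remark~\ref{rem:natural1}.(ii) lets me enlarge $i_0$ so that the winding number $w_i$ of $T_{i+1}$ inside $T_i$ equals $1$ for all $i \geq i_0$.

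Next I would show that the Alexander polynomials stabilise. Fix $i \geq i_0$. Applying the Schubert inequality \eqref{eq:schubert} to $(T_i,T_{i+1})$ gives $g = g_{i+1} \geq w_i g_i + g(T_i,T_{i+1}) = g + g(T_i,T_{i+1})$, so $g(T_i,T_{i+1}) = 0$; by definition of the genus of a pattern this means that the core of $e_i(T_{i+1})$, where $e_i : T_i \longrightarrow V$ is the inverse of a preferred framing of $T_i$, has genus zero, hence is the unknot, so $\Delta_{(T_i,T_{i+1})}(t) \doteq 1$. Now the satellite formula \eqref{eq:alex} applied to $(T_i,T_{i+1})$ — companion $T_i$, pattern $(T_i,T_{i+1})$, winding number $w_i=1$ — yields
\[\Delta_{T_{i+1}}(t) \doteq \Delta_{(T_i,T_{i+1})}(t)\cdot\Delta_{T_i}(t^{w_i}) \doteq 1\cdot\Delta_{T_i}(t).\]
Thus $\Delta_{T_i}(t) \doteq \Delta_{T_{i_0}}(t)$ for all $i \geq i_0$, and I would define $\Delta(t) := \Delta_{T_{i_0}}(t)$. (If $g=0$ then $K$ is unknotted, $\Delta(t)\doteq 1$, and the formula reads $1\doteq 1\cdot 1$.)

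For independence, let $\{T'_j\}$ be any other (nested) neighbourhood basis of $K$ by polyhedral solid tori. Running Steps as above on $\{T'_j\}$ — and using Theorem~\ref{teo:compute_genus} together with finiteness of the genus to see that $g(T'_j)=g$ for $j$ large — the polynomials $\Delta_{T'_j}(t)$ are eventually equal to some $\Delta'(t)$, and it remains to check $\Delta'(t)\doteq\Delta(t)$. I would fix $j$ large with $\Delta_{T'_j}(t)\doteq\Delta'(t)$ and $g(T'_j)=g$, then choose $k$ large with $T_k \subseteq {\rm int}\,T'_j$, $\Delta_{T_k}(t)\doteq\Delta(t)$ and $g(T_k)=g$. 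Let $w$ be the winding number of $T_k$ inside $T'_j$. Because $\check H^1(K;\mathbb Z)=\mathbb Z$ is the direct limit in \eqref{eq:coh} with bonding maps eventually $\cdot 1$, for $j,k$ large the inclusion-induced maps $\check H^1(T'_j)\to\check H^1(K)$ and $\check H^1(T_k)\to\check H^1(K)$ are both isomorphisms; since the former factors as $\check H^1(T'_j)\xrightarrow{\ \cdot w\ }\check H^1(T_k)\to\check H^1(K)$, it follows that $w=1$. Then \eqref{eq:schubert} for $(T'_j,T_k)$ gives $g = g(T_k) \geq w\,g(T'_j)+g(T'_j,T_k) = g + g(T'_j,T_k)$, so the pattern of $(T'_j,T_k)$ is trivial and $\Delta_{(T'_j,T_k)}(t)\doteq 1$; finally \eqref{eq:alex} for $(T'_j,T_k)$ gives $\Delta_{T_k}(t)\doteq\Delta_{(T'_j,T_k)}(t)\cdot\Delta_{T'_j}(t^{w})\doteq\Delta_{T'_j}(t)$, whence $\Delta(t)\doteq\Delta'(t)$.

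The main obstacle is the independence step. Steps establishing stabilisation along a single basis are a fairly direct combination of Proposition~\ref{prop:limit}, Remark~\ref{rem:natural1} and the two Schubert-type formulas. The delicate point is comparing two a priori unrelated bases: one must verify \emph{simultaneously} that a deep torus $T_k$ sits inside a deep torus $T'_j$ with winding number exactly $1$ and that the resulting pattern is unknotted, for only then does the multiplicativity of the Alexander polynomial force the two stable values to agree. (A further mild point worth flagging is that, as in Theorem~\ref{teo:compute_genus}, the statement should be read for \emph{nested} bases; a non-nested basis could interleave finitely- or infinitely-many ``large'' companions with the small ones, which is why the conclusion is phrased only for large indices.)
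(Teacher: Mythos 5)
Your stabilisation argument along a single nested natural basis (genera eventually constant, Schubert forcing $g(T_i,T_{i+1})=0$, winding numbers eventually $1$, then the satellite formula $\Delta_{T_{i+1}}\stackrel{\cdot}{=}\Delta_{(T_i,T_{i+1})}\cdot\Delta_{T_i}(t^{w_i})\stackrel{\cdot}{=}\Delta_{T_i}$) is exactly the first half of the paper's Proposition \ref{prop:alex}. Where you genuinely diverge is the independence step. The paper interleaves the two bases into a single (non-nested) basis $\{T''_k\}$ and invokes a non-nested version of the stabilisation result, which it proves by contradiction via extraction of a nested subsequence with pairwise distinct consecutive polynomials. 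You instead compare the two stable values directly: sandwich a deep $T_k$ inside a deep $T'_j$, use the colimit description of $\check{H}^1(K)=\mathbb{Z}$ (bonding maps eventually $\pm 1$, so the canonical maps $\check{H}^1(T'_j)\to\check{H}^1(K)$ and $\check{H}^1(T_k)\to\check{H}^1(K)$ are eventually isomorphisms) to force the winding number of $T_k$ in $T'_j$ to be $1$, then Schubert plus the satellite formula give $\Delta_{T_k}\stackrel{\cdot}{=}\Delta_{T'_j}$. This is correct and arguably cleaner than the interleaving trick, since it avoids having to deal with a non-nested auxiliary basis at all.

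There is, however, one real shortfall. The theorem quantifies over \emph{every} neighbourhood basis of polyhedral solid tori, with no nestedness hypothesis, and your proof only treats nested bases; your closing remark that ``the statement should be read for nested bases'' is not what the paper claims, and the restriction is in fact unnecessary. The paper covers arbitrary bases in the second half of Proposition \ref{prop:alex}. Better still, your own sandwich argument closes the gap without any contradiction argument: given an arbitrary polyhedral-torus basis $\{T_i\}$, by Remark \ref{rem:natural} each $T_i$ is eventually a natural neighbourhood; fix a nested natural basis $\{S_m\}$ with $\Delta_{S_m}\stackrel{\cdot}{=}\Delta$ and $g(S_m)=g$ for large $m$, and for small $T_i$ choose $S_{m'}\subseteq {\rm int}\,T_i\subseteq T_i\subseteq{\rm int}\,S_m$. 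Both winding numbers are nonzero (naturality) and their product is the winding number of $S_{m'}$ in $S_m$, which is $1$, so each equals $1$; two applications of \eqref{eq:schubert} pin $g(T_i)=g$ and trivialise both patterns, and two applications of \eqref{eq:alex} give $\Delta_{T_i}\stackrel{\cdot}{=}\Delta_{S_m}\stackrel{\cdot}{=}\Delta$. With that paragraph added your proof establishes the theorem in full generality.
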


We shall make use of the following auxiliary proposition:

\begin{proposition} \label{prop:alex} Let $K$ be a nontrivial toroidal set with finite genus and $\check{H}^1(K;\mathbb{Z}) = \mathbb{Z}$. Let $\{T_i\}$ be a neighbourhood basis of $K$ consisting of polyhedral solid tori. Then for big enough $i$ all the $T_i$ have the same Alexander polynomial (up to multiplication by units).
\end{proposition}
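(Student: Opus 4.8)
The plan is to first normalize the given neighbourhood basis so that all winding numbers are eventually $1$ and all genera $g_i := g(T_i)$ are eventually equal to a common value $g$, then to use the Schubert-type genus inequality \eqref{eq:schubert} to show that the patterns $(T_i,T_j)$ become trivial for $j\geq i$, and finally to feed this into the satellite formula \eqref{eq:alex} for Alexander polynomials. For the normalization, Remark~\ref{rem:natural} lets me discard finitely many terms and assume every $T_i$ is a natural neighbourhood of $K$; since $\check{H}^1(K;\mathbb{Z})=\mathbb{Z}$, Remark~\ref{rem:natural1}.(ii) lets me discard still more terms so that the winding number $w_i$ of $T_{i+1}$ inside $T_i$ equals $1$ for every $i$, and (winding numbers being multiplicative under nesting, since the map induced on $H^1$ by a composite inclusion is the composite of the induced maps) the winding number of $T_j$ inside $T_i$ is then $1$ for all $j\geq i$. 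Finally, finite genus together with Proposition~\ref{prop:limit}.(i) makes $\{g_i\}$ nondecreasing and bounded, hence eventually constant, so after discarding finitely many more terms I may assume $g_i=g$ for all $i$. Let $i_0$ be an index from which all three conditions hold; it suffices to prove $\Delta_{T_j}(t)\stackrel{\cdot}{=}\Delta_{T_{i_0}}(t)$ for every $j\geq i_0$.

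To show that the patterns are trivial, fix $i\geq i_0$ and $j\geq i$. The winding number $w$ of $T_j$ inside $T_i$ is $1$, in particular nonzero, so \eqref{eq:schubert} yields $g=g_j\geq w\cdot g_i+g(T_i,T_j)=g+g(T_i,T_j)$ and hence $g(T_i,T_j)=0$. Letting $e:T_i\longrightarrow V$ be the inverse of a preferred framing of $T_i$, this says that the pattern $e(T_j)\subseteq V$ is an unknotted solid torus, so its core is the unknot and $\Delta_{e(T_j)}(t)\stackrel{\cdot}{=}1$.

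For the final comparison I would split into two cases. If $T_i$ is knotted, then $T_j$ — lying inside $T_i$ with nonzero winding number, hence in a nontrivial way — is a satellite with companion $T_i$ and pattern $(V,e(T_j))$, so \eqref{eq:alex} applied with winding number $w=1$ gives
\[\Delta_{T_j}(t)\stackrel{\cdot}{=}\Delta_{e(T_j)}(t)\cdot\Delta_{T_i}(t)\stackrel{\cdot}{=}\Delta_{T_i}(t).\]
If instead $T_i$ is unknotted, then $e$ extends to a homeomorphism $\widehat{e}$ of $\mathbb{R}^3$ (carrying the standard longitude of $T_i$ to that of $V$), so $T_j=\widehat{e}^{-1}(e(T_j))$ is equivalent to the unknotted solid torus $e(T_j)$ and $\Delta_{T_j}(t)\stackrel{\cdot}{=}1\stackrel{\cdot}{=}\Delta_{T_i}(t)$. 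In either case $\Delta_{T_j}(t)\stackrel{\cdot}{=}\Delta_{T_i}(t)$ for all $j\geq i\geq i_0$, which is the assertion. The main point requiring care is precisely the applicability of \eqref{eq:alex}: it is a statement about genuine satellites (nonzero winding number \emph{and} knotted companion), so the degenerate case of an unknotted companion has to be dispatched by hand — which is what the extension $\widehat{e}$ achieves — and besides that, the only delicate thing is the bookkeeping of how many initial terms of $\{T_i\}$ must be dropped so that all three normalizations hold simultaneously.
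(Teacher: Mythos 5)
Your argument correctly establishes the proposition for a \emph{nested} neighbourhood basis, and in that regime it follows the paper's proof essentially verbatim: eventual constancy of the genera forces $g(T_i,T_j)=0$ via \eqref{eq:schubert}, hence a trivial pattern, and then \eqref{eq:alex} with winding number $1$ transfers the Alexander polynomial from one torus to the next. Your separate treatment of the degenerate case where the companion $T_i$ is unknotted (extending $e$ to an ambient homeomorphism) is in fact slightly more careful than the paper, which applies \eqref{eq:alex} there without comment.

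There is, however, a genuine gap: the proposition is stated for an \emph{arbitrary} neighbourhood basis $\{T_i\}$ of polyhedral solid tori, not a nested one, and your normalization silently assumes nestedness. The phrase ``the winding number of $T_{i+1}$ inside $T_i$'' (and likewise ``of $T_j$ inside $T_i$ for all $j\geq i$'') only makes sense when $T_{i+1}\subseteq {\rm int}\ T_i$, which need not hold for a general basis; Remark~\ref{rem:natural1} applies only to natural neighbourhood \emph{bases}, which are nested by definition. This extra generality is not cosmetic: the proof of Theorem~\ref{teo:alex} applies Proposition~\ref{prop:alex} to the interleaved basis $T''_{2k-1}:=T_k$, $T''_{2k}:=T'_k$ built from two different bases, and that sequence is typically not nested. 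The paper closes the gap with a short contradiction argument: if the Alexander polynomials were not eventually constant, one could extract a nested subsequence $\{T_{i_j}\}$ in which consecutive terms have distinct Alexander polynomials, contradicting the nested case you proved. You need to add this (or an equivalent) reduction to complete the proof.
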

\begin{proof} By Remark \ref{rem:natural} we may assume without loss of generality that the $T_i$ are natural neighbourhoods of $K$. Suppose first that they are nested. 

Since the genera of $T_i$ form an increasing sequence by Proposition \ref{prop:limit} and the genus of $K$ is finite by assumption, there exists $i_0$ such that $g(T_i)$ is constant for $i \geq i_0$. In particular we have from Equation \eqref{eq:schubert} that $g(T_i) = g(T_{i+1}) \geq w_i \cdot g(T_i) + g(T_i,T_{i+1})$ where $w_i$ is the winding number of $T_{i+1}$ inside $T_i$. Since $w_i \geq 1$ because the $\{T_i\}$ are natural neighbourhoods of $K$, it is apparent from this inequality that $g(T_i,T_{i+1}) = 0$. Thus, the pattern of $(T_i,T_{i+1})$ is trivial. In particular, the Alexander polynomial $\Delta_{(T_i,T_{i+1})} \stackrel{\cdot}{=} 1$. The condition that $\check{H}^1(K;\mathbb{Z}) = \mathbb{Z}$ implies that, for big enough $i$, the winding number of each $T_{i+1}$ inside the previous $T_i$ is precisely one. Then Equation \eqref{eq:alex} yields $\Delta_{T_i}(t) \stackrel{\cdot}{=} \Delta_{T_{i+1}}(t)$. Therefore, all the $T_i$ have the same Alexander polynomial $\Delta(t)$ (for big enough $i$).

Consider now the general case, where the $T_i$ are not necessarily nested. We reason by contradiction, so suppose that the sequence $\Delta_{T_i}(t)$ were not eventually constant. We construct a subsequence $\{T_{i_j}\}$ of the $\{T_i\}$ as follows. Start setting $T_{i_1} := T_1$ and find $i_2$ big enough so that (i) $T_{i_2} \subseteq {\rm int}\ T_{i_1}$ and (ii) $T_{i_1}$ and $T_{i_2}$ have a different Alexander polynomial. Condition (i) can be met because the $\{T_i\}$ are a neighbourhood basis of $K$; condition (ii) uses the assumption that the sequence $\Delta_{T_i}(t)$ is not eventually constant. Starting now with $T_{i_2}$ repeat the same process, and so on. This yields a subsequence $\{T_{i_j}\}$ of the $\{T_i\}$ such that (i) the $\{T_{i_j}\}$ are nested and (ii) every two consecutive $T_{i_j}$ and $T_{i_{j+1}}$ have a different Alexander polynomial. This, however, contradicts the previous paragraph.
\end{proof}

\begin{proof}[Proof of Theorem \ref{teo:alex}] Let $\{T_i\}$ and $\{T'_j\}$ be two neighbourhood bases of $K$ consisting of polyhedral tori. Combine the two to obtain a third neighbourhood basis for $K$, say $\{T''_k\}$, whose elements are defined as $T''_{2k-1} := T_k$ and $T''_{2k} := T'_k$. By Proposition \ref{prop:alex} applied to $\{T''_k\}$, for big enough $k$ all the $T''_k$ have the same Alexander polynomial. This readily implies that for big enough $i$ and $j$ all the $T_i$ and $T'_j$ have the same Alexander polynomial. This concludes the proof.
\end{proof}

The condition that the genus of $K$ be finite has only entered our arguments to guarantee that, if $\{T_i\}$ is any natural neighbourhood basis for $K$, then the pattern of $(T_i,T_{i+1})$ is trivial for big enough $i$. As it turns out, for Theorem \ref{teo:alex} to be true it is enough that this property holds for any one natural neighbourhood basis of $K$. Let us say that $K$ is \emph{weakly knotted} in that case. Since there exist weakly knotted toroidal sets having infinite genus, the Alexander polynomial can actually be defined for a wider class of toroidal sets than those of finite genus considered above.

\section{Applications (1)} \label{sec:dynamics}

Now we finally turn to dynamics and show that certain sets can be embedded in $\mathbb{R}^3$ in a knotted manner that makes it impossible to realize them as attractors. More specifically, we will show this to be the case for solenoids and the circumference $\mathbb{S}^1$; in fact, there are uncountably many nonequivalent embeddings with this property.

Let us recall some standard definitions first (see for instance \cite{hale1}). Let $f:\mathbb{R}^3\longrightarrow\mathbb{R}^3$ be a homeomorphism. A set $K\subseteq\mathbb{R}^3$ is called \emph{invariant} if $f(K)=K$. A compact set $P\subseteq\mathbb{R}^3$ is \emph{attracted} by $K$ if for each neighbourhood $V$ of $K$ there exists $n_0\in\mathbb{N}$ such that $f^n(P)\subseteq V$ whenever $n\geq n_0$. A compact invariant set $K$ is an attractor (a more precise but cumbersome terminology would be asymptotically stable attractor, or Lyapunov stable attractor) if it has a neighbourhood $U$ such that every compact set $P \subseteq U$ is attracted by $U$. \label{def:attract} The biggest $U$ for which this is satisfied is called the \emph{basin of attraction} of $K$ and is always an open invariant set. Some authors consider only global attractors or minimal attractors, but both restrictions are unnecesary in our context.

Among the examples of toroidal sets introduced in Section \ref{sec:def}, tame knots, $n$--adic solenoids and the Whitehead continuum can easily be seen to be attractors for suitable homeomorphisms of $\mathbb{R}^3$. The case of tame knots is intuitively plausible and can be proved using some elementary piecewise linear topology (see \cite[Corollary 4, p. 327]{gunthersegal1} or \cite[Proposition 12, p. 6169]{mio5}). As for $n$--adic solenoids and the Whitehead continuum, their realizability as attractors follows almost immediately from their definition. For instance, consider the case of the Whitehead continuum. In the notation introduced in Example \ref{ex:whitehead}, clearly $K$ is invariant for the homeomorphism $h$. Moreover, it attracts every compact subset $P$ of $U := {\rm int}\ T_0$: since $K$ is the decreasing intersection of the compact sets $h^i(T_0)$, for any neighbourhood $V$ of $K$ there exists $i_0$ such that $h^{i_0}(T_0) \subseteq V$, and then $h^i(P) \subseteq V$ for every $i \geq i_0$ and every $P \subseteq {\rm int}\ T_0 = U$. Thus $K$ is a stable attractor for $h$ and its basin of attraction contains ${\rm int}\ T_0$ (and in fact extends beyond it). The argument for $n$--adic solenoids is very similar. Generalized solenoids (Example \ref{ex:solenoids}) pose a more interesting problem: since the winding number $n_i$ is allowed to vary, it is not at all clear that such solenoids can be realized as attractors. In fact, not all of them can: G\"unther \cite{gunther1} proved that if the sequence $\{n_i\}$ consists of pairwise relatively prime integers, then the resulting generalized solenoid $K$ cannot be an attractor for a homeomorphism of $\mathbb{R}^3$. This result was obtained by a ingenious analysis of the cohomology group $\check{H}^1(K;\mathbb{Z})$. In this section we will shall see that knotted $n$--adic solenoids cannot be realized as attractors either, but the obstruction will now lie in their knottedness and not in their cohomology, which is the ``correct'' one for an attractor.

The main theoretical tool for this section is the following theorem. The reason why it is not entirely trivial is because some care has to be exercised concerning the polyhedral nature of the solid tori we are using throughout the paper.

\begin{theorem} \label{teo:finite} Let $K$ be a toroidal set that is an attractor for a homeomorphism $f$ of $\mathbb{R}^3$. Then the genus of $K$ is finite.
\end{theorem}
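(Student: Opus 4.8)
The plan is to manufacture, out of the attractor structure, a neighbourhood basis of $K$ consisting of \emph{polyhedral} solid tori that all have one and the same finite genus; once this is available, the very definition of the genus of a toroidal set forces $g(K)<\infty$. The point that requires care — and the reason the statement is not trivial — is that the solid tori produced naturally by the dynamics need not be polyhedral, so one has to feed them through Moise's approximation theorem while controlling the genus, and for this one uses that \emph{knot type and tameness are invariants of the pair $(\mathbb{R}^3,\,\cdot\,)$ under arbitrary homeomorphisms of $\mathbb{R}^3$, not merely piecewise linear ones}.

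First I would fix a polyhedral solid torus $T_0$ which is a neighbourhood of $K$ and lies inside the basin of attraction $U$ of $K$; such a $T_0$ exists because $K$, being toroidal, has a neighbourhood basis of solid tori — which by Moise's approximation theorem (recalled in Section \ref{sec:def}) may be taken polyhedral — and because $U$ is open. Then I would build inductively a nested sequence $T_0\supseteq T_1\supseteq T_2\supseteq\cdots$ of polyhedral solid tori, each a neighbourhood of $K$, lying in arbitrarily small neighbourhoods of $K$ and all of the same genus. Given a polyhedral solid torus $T_m$ with $K\subseteq\operatorname{int}T_m\subseteq T_m\subseteq U$, the iterates $f^n(T_m)$ converge to $K$ because $T_m$ is a compact subset of the basin; hence one may choose $n_{m+1}$ so large that $S_m:=f^{n_{m+1}}(T_m)$ is contained in $\operatorname{int}T_m$ and in the $1/(m+1)$–neighbourhood of $K$. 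This $S_m$ is a solid torus (a homeomorphic image of one), possibly wildly embedded, and $K=f^{n_{m+1}}(K)\subseteq\operatorname{int}S_m$.

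The key step is the genus‑preserving replacement of $S_m$ by a polyhedral torus. Since $f^{n_{m+1}}$ is a homeomorphism of $\mathbb{R}^3$ and the core curve $c_m$ of $T_m$ is a polyhedral knot, the curve $f^{n_{m+1}}(c_m)$ — which is a core of $S_m$ — is again tame and is equivalent as a knot to $c_m$; therefore $S_m$ has a well defined genus, equal to $g(T_m)$. Now I would apply Moise's theorem to obtain a polyhedral solid torus $T_{m+1}$ with $K\subseteq\operatorname{int}T_{m+1}\subseteq T_{m+1}\subseteq\operatorname{int}T_m$, contained in the $1/m$–neighbourhood of $K$, and obtained from $S_m$ by a homeomorphism of $\mathbb{R}^3$ that moves points arbitrarily little (small enough that all of the above inclusions, which only demand that $K$ stay in the interior and that $S_m$ stay inside $\operatorname{int}T_m$, are preserved). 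Such a homeomorphism sends the core of $S_m$ to a core of $T_{m+1}$ and preserves knot type, so $g(T_{m+1})=g(S_m)=g(T_m)$; by induction $g(T_m)=g(T_0)$ for all $m$. Finally the $T_m$ are nested, contain $K$, and lie in the $1/m$–neighbourhoods of $K$, so $\bigcap_m T_m=K$ and hence $\{T_m\}$ is a neighbourhood basis of $K$; as all of them are polyhedral solid tori of genus $g(T_0)$, the definition of the genus of a toroidal set gives $g(K)\le g(T_0)<\infty$.

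The main obstacle is exactly this passage from the wild solid torus $S_m$ handed to us by $f$ to a polyhedral one of the same genus. Concretely one must justify two things: that Moise's approximation can be realized by an ambient homeomorphism of $\mathbb{R}^3$ moving points less than $\epsilon$ (equivalently, that a sufficiently small perturbation of a tame knot is equivalent to it), and that tameness and knot type of a simple closed curve in $\mathbb{R}^3$ are preserved by every self‑homeomorphism of $\mathbb{R}^3$, so that $f^{n_{m+1}}(c_m)$ really does have the same genus as $c_m$. Everything else — producing $T_0$ inside the basin, the convergence $f^n(T_m)\to K$, and the fact that a decreasing sequence of compact neighbourhoods with intersection $K$ is a neighbourhood basis — is routine.
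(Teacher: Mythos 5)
Your overall architecture is the same as the paper's: fix one polyhedral solid torus $T_0$ in the basin, push it into arbitrarily small neighbourhoods of $K$ with the dynamics, and repair polyhedrality without changing the genus, so that $K$ acquires a neighbourhood basis of polyhedral solid tori of constant (hence finite) genus. The gap sits exactly at the step you yourself flag as ``the main obstacle'', and as written it does not close. The version of Moise's theorem recalled in Section~\ref{sec:def} (\cite[Theorem 1, p.~253]{moise2}) produces an \emph{embedding} $h : S_m \longrightarrow \mathbb{R}^3$ moving points less than $\epsilon$ with $h(S_m)$ polyhedral; it does not produce an ambient homeomorphism of $\mathbb{R}^3$. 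With only an embedding of $S_m$ you cannot conclude that the core of $h(S_m)$ has the same knot type as the core of $S_m$ ($\epsilon$--closeness of embeddings does not control knot type), so the chain $g(T_{m+1})=g(S_m)=g(T_m)$ breaks. Upgrading to an ambient $\epsilon$--homeomorphism is possible here because $S_m$ is a \emph{tame} solid torus (being the image of a polyhedron under a homeomorphism of $\mathbb{R}^3$), but that is an $\epsilon$--taming theorem of Bing type, substantially heavier than anything the paper invokes, and your parenthetical ``equivalently, that a sufficiently small perturbation of a tame knot is equivalent to it'' is not an equivalent reformulation of it. For a general compact $3$--manifold the ambient statement is simply false (the closed bad complementary domain of an Alexander horned sphere cannot be carried onto a polyhedron by any homeomorphism of $\mathbb{R}^3$), so tameness of $S_m$ would have to enter your argument explicitly.

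The paper sidesteps all of this by one change of order: instead of forming $S_m=f^{n}(T_m)$ and then straightening the image, it applies Moise's approximation to the homeomorphism $f^N$ itself, obtaining a piecewise linear homeomorphism $g$ of $\mathbb{R}^3$ that $\epsilon$--approximates $f^N$, and sets $T_1:=g(T_0)$. This $T_1$ is automatically polyhedral \emph{and} ambient homeomorphic to $T_0$ via $g$, hence of the same genus, with no taming theory needed; all that remains is to check that $T_1$ is still a neighbourhood of $K$ inside the prescribed compact neighbourhood $V$, which follows by taking $\epsilon$ smaller than the distance from $f^N(\partial T_0)$ to $K\cup(\mathbb{R}^3-V)$ and then ruling out, by compactness of $T_1$, the alternative in which $T_1$ contains $\mathbb{R}^3-V$ and misses $K$ --- a case distinction that your phrase ``small enough that all of the above inclusions are preserved'' glosses over. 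If you restructure your induction around approximating $f^{n_{m+1}}$ rather than its image, the rest of your argument (existence of $T_0$, convergence of the iterates, and the passage from a constant--genus polyhedral basis to finiteness of $g(K)$) is sound.
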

\begin{proof} Since $K$ is toroidal, there exists a polyhedral solid torus $T_0$ contained in the basin of attraction of $K$. Let $U$ be any compact neighbourhood of $K$ and choose $N$ big enough so that $f^N(T_0) \subseteq V$. Consider the compact set $f^N(\partial T_0)$, where $\partial T_0$ denotes the boundary of $T_0$. It is disjoint from both $\mathbb{R}^3 - V$ and $K$, so there exists $\epsilon > 0$ such that the $\epsilon$--neighbourhood of $f^N(T_0)$ is also disjoint from both sets. Let $g : \mathbb{R}^3 \longrightarrow \mathbb{R}^3$ be a piecewise linear homeomorphism that $\epsilon$--approximates $f^N$; that is, the distance between $f^N(p)$ and $g(p)$ is less than $\epsilon$ for every $p \in \mathbb{R}^3$ (for the existence of this piecewise linear approximation see for instance \cite[Theorem 1, p. 253]{moise2}). Let $T_1 := g(T_0)$. This is a polyhedral solid torus by construction. The choice of $g$ and $\epsilon$ guarantees that the boundary of $T_1$, which is $g(\partial T_0)$, is disjoint from both $\mathbb{R}^3 - V$ and $K$. Thus either $T_1$ contains $\mathbb{R}^3 - V$ and is disjoint from $K$ or viceversa. The first case is impossible, since the closure of $\mathbb{R}^3 - V$ is noncompact (because $V$ is compact) but $T_1$ is compact. Thus the $T_1$ contains $K$ and is disjoint from $\mathbb{R}^3 - V$; therefore, $T_1$ is a neighbourhood of $K$ contained in $V$. Moreover, by construction $T_1$ is ambient homeomorphic to $T_0$ and so both have the same genus. Repeating this construction while letting $V$ run in a neihbourhood basis of $K$ we see that $K$ has a neighbourhood basis of polyhedral solid tori, all ambient homeomorphic to $T_0$, and therefore its genus is bounded above by the genus of $T_0$ and is consequently finite. \end{proof}

The following corollary is a direct consequence of the above theorem and the preliminary work developed in the previous section:

\begin{corollary}\label{coro:attr} Let $K$ be a nontrivial toroidal set that is an attractor for a homeomorphism of $\mathbb{R}^3$. Then:
\begin{itemize}
	\item[(i)] If $\check{H}^1(K;\mathbb{Z}) \neq \mathbb{Z}$, then $K$ is unknotted. In particular, $\pi_1(\mathbb{R}^3 - K)$ is Abelian and every natural neighbourhood of $K$ is unknotted. 
	\item[(ii)] If $\check{H}^1(K;\mathbb{Z}) = \mathbb{Z}$, then $K$ has a well defined Alexander polynomial.
\end{itemize}
\end{corollary}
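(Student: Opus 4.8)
The plan is to assemble this corollary by combining Theorem \ref{teo:finite} with the structural results already established in Section \ref{sec:genus}; no new ideas are needed. The first step is to record that, since $K$ is a toroidal set that is an attractor for a homeomorphism of $\mathbb{R}^3$, Theorem \ref{teo:finite} guarantees that $K$ has finite genus. From here on all the results of Section \ref{sec:genus} whose hypothesis is ``nontrivial toroidal set of finite genus'' are available, since nontriviality is part of the standing assumption of the corollary.

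For part (i) I would assume $\check{H}^1(K;\mathbb{Z}) \neq \mathbb{Z}$ and apply Theorem \ref{teo:fin_gen}.(ii) directly: $K$ is nontrivial and of finite genus, so it is unknotted. The two supplementary assertions — that $\pi_1(\mathbb{R}^3 - K)$ is Abelian and that every natural neighbourhood of $K$ is unknotted — then follow immediately from the equivalences (i) $\Leftrightarrow$ (ii) $\Leftrightarrow$ (iii) of Proposition \ref{prop:unknotted}, which applies because $K$ is a nontrivial toroidal set.

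For part (ii) I would assume $\check{H}^1(K;\mathbb{Z}) = \mathbb{Z}$. Since $K$ is again nontrivial and of finite genus, Theorem \ref{teo:alex} applies verbatim and produces a polynomial $\Delta(t) \in \mathbb{Z}[t,t^{-1}]$ which, for every neighbourhood basis of $K$ by polyhedral solid tori, agrees up to units with the Alexander polynomials of the members of the basis from some index on; this is precisely the statement that $K$ has a well defined Alexander polynomial.

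In short, the proof is a short piece of bookkeeping. The only point that needs attention is to verify in each case that the hypotheses of the invoked results are actually met — nontriviality is hypothesised, and finiteness of the genus is exactly what Theorem \ref{teo:finite} supplies — so there is no genuine obstacle: the substantive work was carried out in Sections \ref{sec:genus} and at the start of Section \ref{sec:dynamics}, and the corollary merely packages the dynamical consequence.
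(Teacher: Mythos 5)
Your proof is correct and follows exactly the route the paper intends: the paper states the corollary as ``a direct consequence of the above theorem and the preliminary work developed in the previous section,'' namely Theorem \ref{teo:finite} for finiteness of the genus, Theorem \ref{teo:fin_gen}.(ii) together with Proposition \ref{prop:unknotted} for part (i), and Theorem \ref{teo:alex} for part (ii). All hypotheses are checked correctly; nothing is missing.
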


With this we can already give our first example of a solenoid that, when suitably reembedded in $\mathbb{R}^3$, cannot be realized as an attractor:

\begin{example} \label{ex:sol_not_attrac} The knotted dyadic solenoid of Example \ref{ex:knotted_sol} had, by construction, a knotted natural neighbourhood. Thus, it cannot be realized as an attractor for a homeomorphism of $\mathbb{R}^3$.
\end{example} 

The phenomenon illustrated in the previous example is actually very common, as the following corollary shows:

\begin{corollary} \label{cor:no_solenoid} Any solenoid can be embedded in $\mathbb{R}^3$ in uncountably many inequivalent ways such that none of them can be realized as an attractor for a homeomorphism of $\mathbb{R}^3$.
\end{corollary}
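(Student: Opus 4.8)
The plan is to combine three ingredients: the additivity of knot genus under connected sum, the computation of the genus of the knotted solenoid in Example~\ref{ex:knotted_sol}, and the fact (Theorem~\ref{teo:finite}) that an attractor must have finite genus. First I would recall that, for any fixed solenoid type, one can produce a family of embeddings into $\mathbb{R}^3$ by choosing a sequence of unknotted solid tori $\{V_i\}_{i \geq 0}$ realizing the prescribed winding numbers and then applying a knotted embedding $h : V_0 \longrightarrow \mathbb{R}^3$ of the outermost torus. As in Example~\ref{ex:knotted_sol}, the image $K := h\bigl(\bigcap_i V_i\bigr)$ is a solenoid with a knotted natural neighbourhood $h(V_0)$, and the argument given there (using Theorem~\ref{teo:fin_gen}.(ii) together with Proposition~\ref{prop:unknotted}, since $\check{H}^1(K;\mathbb{Z})$ of a solenoid is neither $0$ nor $\mathbb{Z}$) shows $K$ has infinite genus. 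By Theorem~\ref{teo:finite}, any set of infinite genus cannot be an attractor, so each such $K$ fails to be realizable.

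The substantive point is therefore to obtain \emph{uncountably many inequivalent} such embeddings. Here the natural device is to let the knot type of the companion vary. I would fix, for each subset $A \subseteq \mathbb{N}$ (or more simply, for each nondecreasing sequence drawn from a fixed infinite list of pairwise distinct prime knots), a knot $J_A$ obtained as a connected sum reflecting $A$, choose $h_A : V_0 \longrightarrow \mathbb{R}^3$ so that the core of $h_A(V_0)$ is $J_A$, and set $K_A := h_A\bigl(\bigcap_i V_i\bigr)$. Each $K_A$ has infinite genus by the argument above, hence is not an attractor. To see that distinct $A$ give inequivalent embeddings, I would use an ambient invariant of $K_A$ that recovers the companion knot type: the key observation is that $h_A(V_0)$ is a natural neighbourhood of $K_A$, and one can argue—using the direct-limit description of $\pi_1(\mathbb{R}^3 - K_A)$ from Equation~\eqref{eq:dirlim} together with the satellite injectivity statement (\cite[Theorem 9, p. 113]{rolfsen1}) invoked in the proof of Proposition~\ref{prop:unknotted}—that $\pi_1(\mathbb{R}^3 - J_A)$ embeds in $\pi_1(\mathbb{R}^3 - K_A)$ in a way that is canonical enough to be an invariant of the homeomorphism type of the pair $(\mathbb{R}^3, K_A)$; since the groups $\pi_1(\mathbb{R}^3 - J_A)$ distinguish the (countably infinitely many, hence combinatorially uncountably many sums of) prime knots, we get uncountably many classes.

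Alternatively, and perhaps more cleanly, I would sidestep the fundamental-group bookkeeping by invoking the Alexander-polynomial machinery of Theorem~\ref{teo:alex} in the following way: although $K_A$ has infinite genus, it is \emph{weakly knotted} in the sense defined just after the proof of Theorem~\ref{teo:alex} (its natural neighbourhood basis $\{h_A(V_i)\}$ has trivial pattern at each stage because the $V_i$ are nested unknotted tori inside the unknotted $V_0$, so the pattern of $(V_i, V_{i+1})$ is trivial and this is preserved by $h_A$). Hence each $K_A$ has a well-defined Alexander polynomial, equal (up to units) to $\Delta_{J_A}(t^{w})$-type expressions built from the companion; choosing the $J_A$ so that these polynomials are pairwise distinct—e.g. connected sums of distinct torus knots, whose Alexander polynomials multiply—immediately gives uncountably many inequivalent $K_A$, since the Alexander polynomial is an invariant of the embedding.

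I expect the main obstacle to be the verification that distinct choices genuinely yield \emph{inequivalent} embeddings rather than merely infinitely many: one must be careful that the invariant extracted (companion knot group, or Alexander polynomial) really is intrinsic to the pair $(\mathbb{R}^3, K_A)$ and does not secretly depend on the chosen neighbourhood basis. The results already proved—Proposition~\ref{prop:unknotted}, Theorem~\ref{teo:alex}, and Proposition~\ref{prop:alex}—are precisely designed to guarantee this well-definedness, so the argument should go through, but writing the uncountability count carefully (choosing an explicit uncountable family of connected sums with pairwise distinct invariants, e.g. indexed by infinite subsets of the primes together with a diagonal argument) is where the attention is needed.
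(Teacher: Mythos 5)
Your first half is fine and is essentially what the paper does: the embedded solenoid has a knotted natural neighbourhood (equivalently, infinite genus), so Corollary \ref{coro:attr}.(i) (or Theorem \ref{teo:finite}) rules it out as an attractor. The genuine gap is in producing \emph{uncountably many inequivalent} embeddings. The paper does not construct such a family at all; it quotes Conner--Meilstrup--Repov\v{s} \cite[Theorem~5.4]{Conner1}, who supply, for any solenoid, uncountably many toroidal embeddings with non-Abelian and pairwise non-homeomorphic complements. Both of your proposed substitutes for that citation break down.

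(a) In your first route you knot only the outermost torus $V_0$. The core of a polyhedral solid torus is a tame knot, and there are only countably many tame knot types; an infinite connected sum $J_A$ indexed by an infinite $A \subseteq \mathbb{N}$ is wild at a point and cannot be the core of a polyhedral $V_0$. So varying only the companion of $V_0$ yields at most countably many embeddings. To reach uncountably many one must insert nontrivial knotting at infinitely many levels of the defining nest $\{V_i\}$ -- which is what \cite{Conner1} does -- and then distinguishing the resulting complements is precisely the hard step you defer. Moreover, even for your restricted family, the claim that $\pi_1(\mathbb{R}^3 - J_A)$ sits inside $\pi_1(\mathbb{R}^3 - K_A)$ ``canonically enough'' to be an invariant of the pair is unsubstantiated: natural neighbourhoods of $K_A$ are far from unique and their cores have different knot types (e.g.\ $h_A(V_1)$ is a winding-number-$2$ satellite of $J_A$, not $J_A$ itself), so there is no evident well-defined ``companion knot type'' of the toroidal set. (b) The Alexander-polynomial route fails outright: Theorem \ref{teo:alex}, including its ``weakly knotted'' extension, still requires $\check{H}^1(K;\mathbb{Z}) = \mathbb{Z}$, which forces the winding numbers to be eventually $1$. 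For a solenoid the winding numbers are $\geq 2$, so even with trivial patterns Equation \eqref{eq:alex} gives $\Delta_{T_{i+1}}(t) \stackrel{\cdot}{=} \Delta_{T_i}(t^{w_i})$ and the sequence of polynomials of the nested tori never stabilizes; there is no well-defined Alexander polynomial of $K_A$ to use as an invariant.
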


By \emph{nonequivalent} we mean the following. Let $e, e' : S \longrightarrow \mathbb{R}^3$ be two embeddings of the same compact topological space $S$ into $\mathbb{R}^3$. We shall say that $e$ and $e'$ are equivalent if there exists a homeomorphism $h$ of $\mathbb{R}^3$ that sends $e(S)$ onto $e'(S)$ (this is weaker than requiring that $e' = he$, which is another common notion of equivalence). Intuitively, if $e$ and $e'$ are equivalent then $e(S)$ and $e'(S)$ lie in $\mathbb{R}^3$ in the same way. Clearly, if two embeddings of $S$ are equivalent then $\mathbb{R}^3 - e(S)$ and $\mathbb{R}^3 - e'(S)$ are homeomorphic to each other.

\begin{proof} A result of Conner, Meilstrup and Repov\v{s} \cite[Theorem~5.4, p. 1550069-16]{Conner1} states that for any given solenoid $\Sigma$ (best thought as an abstract topological space) there exist uncountably many embeddings $e_{\alpha} : \Sigma \longrightarrow \mathbb{R}^3$, where $\alpha$ ranges in some uncountable set of indices $A$, such that: (1) the embedded solenoid $e_{\alpha}(\Sigma)$ is a toroidal set; (2) the fundamental group of $\mathbb{R}^3 - e_{\alpha}(\Sigma)$ is non Abelian and (3) if $\alpha \neq \beta$, then $\mathbb{R}^3 - e_{\alpha}(\Sigma)$ and $\mathbb{R}^3 - e_{\beta}(\Sigma)$ are non homeomorphic. This latter condition implies that the embeddings $e_{\alpha}(\Sigma)$ and $e_{\beta}(\Sigma)$ are non equivalent, and conditions (1) and (2) imply by Corollary \ref{coro:attr}.(i) that none of the embedded solenoids $e_{\alpha}(\Sigma)$ can be realized as an attractor for a homeomorphism of $\mathbb{R}^3$. This proves the corollary.
\end{proof}

This corollary is closely related to results of Jiang, Ni and Wang \cite{Jiang1}, who show that a closed orientable manifold admits a Smale knotted solenoid (this being a knotted solenoid with a very specific dynamics) as an attractor if and only if it has a lens space $L(p,q)$ with $p\neq 0,1$ as a prime factor. Corollary \ref{cor:no_solenoid} is more general in that we do not assume any prescribed dynamics on the solenoid but less general in that we only work in $\mathbb{R}^3$.

The previous examples involving solenoids may be relatively difficult to visualize. Now we move on to consider embeddings of $\mathbb{S}^1$ that cannot be realized as attractors either.	

\begin{example} \label{ex:infinite_sum} A knot constructed as an infinite connected sum of nontrivial knots as in Example \ref{ex:wild} has infinite genus, so by Theorem \ref{teo:finite} it cannot be realized as an attractor for a homeomorphism of $\mathbb{R}^3$.
\end{example}

Once again, this particular example illustrates a widespread phenomenon:

\begin{corollary} \label{cor:uncountably} There are uncountably many inequivalent ways of embedding $\mathbb{S}^1$ in $\mathbb{R}^3$ in such a way that it cannot be realized as an attractor for a homeomorphism. All these embeddings are tame except for a single point.
\end{corollary}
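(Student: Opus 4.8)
The plan is to produce an uncountable family of infinite connected sums $K_\alpha = K_1^\alpha \# K_2^\alpha \# \cdots$ of nontrivial polyhedral knots, arranged along a shrinking sequence of cells converging to a single point $p$ exactly as in Example \ref{ex:wild}, with the property that distinct members of the family are inequivalent knots. Each such $K_\alpha$ has infinite genus by Example \ref{ex:wild1} (being an infinite sum of nontrivial summands), hence by Theorem \ref{teo:finite} it cannot be realized as an attractor for a homeomorphism of $\mathbb{R}^3$; and each $K_\alpha$ is tame at every point except possibly $p$, by the explicit local structure of the construction. So the substance of the corollary is the \emph{uncountably many inequivalent} clause, i.e.\ producing an uncountable set of these infinite sums no two of which are ambient homeomorphic.

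First I would fix a countably infinite supply of pairwise inequivalent prime knots $P_1, P_2, P_3, \ldots$ (for instance $P_n$ the $(2,2n+1)$ torus knot), and for each subset $A \subseteq \mathbb{N}$ that is infinite, form the infinite connected sum $K_A$ whose $i$-th summand is $P_{a_i}$, where $a_1 < a_2 < \cdots$ enumerates $A$. There are uncountably many infinite subsets of $\mathbb{N}$; restricting to an antichain, or simply observing that the map $A \mapsto K_A$ has only countable fibers (see below), will give an uncountable family of inequivalent knots. To see that $K_A$ and $K_B$ are inequivalent when $A \neq B$, the key step is to recover the multiset of prime summands of $K_A$ from its ambient homeomorphism type. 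Here I would use the natural neighbourhood basis $\{T_i^A\}$ of $K_A$ described in Example \ref{ex:wild}: the core of $T_i^A$ is the finite connected sum $P_{a_1} \# \cdots \# P_{a_i}$, and by Proposition \ref{prop:limit}.(ii) combined with Theorem \ref{teo:compute_genus}, the sequence of genera $g(T_i^A) = g(P_{a_1}) + \cdots + g(P_{a_i})$ is determined by the ambient homeomorphism type of $K_A$. More robustly, a homeomorphism $h$ of $\mathbb{R}^3$ with $h(K_A) = K_B$ carries a natural neighbourhood basis of $K_A$ to one of $K_B$; using that the summand decomposition of a (finite) composite knot is unique up to order (Schubert's theorem) and that the winding numbers along both bases are eventually $1$, one matches the prime summands of $K_A$ and $K_B$ with at most finitely many exceptions, and a standard back-and-forth refinement upgrades this to a bijection of the full multisets. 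Hence the assignment $A \mapsto \text{(multiset of primes)}$ is an invariant, its fibers are at most countable, and the family is uncountable.

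Finally I would check tameness: away from $p$, the knot $K_A$ coincides locally with a polyhedral arc inside one of the cells $C_i$, so it is tame there by definition; only at the limit point $p$ does tameness possibly fail. (In fact, since all summands are nontrivial, $p$ is genuinely wild — this is the ``we shall see later'' remark after Example \ref{ex:wild}, but for the present corollary we only need the one-point bound.) This yields the last sentence of the statement.

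\medskip

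The main obstacle is the inequivalence argument: one must be careful that ambient homeomorphisms of $\mathbb{R}^3$ need not respect any \emph{a priori} chosen neighbourhood basis, so the recovery of the prime-summand multiset from the homeomorphism type has to be phrased intrinsically — via the genus limit of \emph{arbitrary} natural neighbourhood bases (Theorem \ref{teo:compute_genus}, Proposition \ref{prop:limit}) and, for the finer conclusion, via the Alexander polynomial limit of Theorem \ref{teo:alex} — rather than by comparing the specific tubes $T_i^A$ and $T_i^B$ directly. Everything else (infinitude of genus, the non-attractor conclusion, tameness off $p$) is immediate from the results already established.
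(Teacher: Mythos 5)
Your overall skeleton coincides with the paper's: take infinite connected sums of pairwise inequivalent prime knots indexed by an uncountable family of subsets of $\mathbb{N}$ (the paper uses the torus knots $T_{i+1,i+2}$ and binary sequences with infinitely many ones), note that each has infinite genus by Example \ref{ex:wild1}, apply Theorem \ref{teo:finite} to rule them out as attractors, and observe tameness off the limit point. The part that carries all the weight, however, is the inequivalence claim, and there your proposal has a genuine gap: none of the mechanisms you invoke actually recovers the prime summands from the ambient homeomorphism type. Proposition \ref{prop:limit}.(ii) and Theorem \ref{teo:compute_genus} only make the \emph{limit} of the genus sequence an invariant, and that limit is $+\infty$ for every member of your family, so it distinguishes nothing; the individual sequence $g(T_i^A)$ is not an invariant, since it depends on the choice of natural neighbourhood basis. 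Theorem \ref{teo:alex} is inapplicable because these knots have infinite genus (and are not weakly knotted: the pattern of $(T_i,T_{i+1})$ is essentially $K_{i+1}$, which is nontrivial). Finally, the ``robust'' argument does not close the gap either: if $h(T_i^A)$ and $T_j^B$ are nested natural neighbourhoods with winding number one, this exhibits one core as a winding-number-one satellite of the other, and a winding-number-one satellite relation does \emph{not} imply a connected-sum relation, so unique prime factorization of finite sums cannot yet be applied. The ``standard back-and-forth refinement'' is asserted rather than supplied.

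The missing idea, which the paper isolates in the lemma preceding the corollary, is to exploit the single wild point. One first shows, via the local penetration property $(*)$ of Remark \ref{rem:wildpoint} (a local invariant under ambient homeomorphisms), that any equivalence $h$ must send the wild point $p$ of $K_A$ to the wild point $p'$ of $K_B$. With that in hand one cuts $K_A$ with a ball $B$ around $p$ swallowing all but the first $i_0$ summands, obtaining the finite tame sum $L = K_1 \# \cdots \# K_{i_0}$; since $h(B)$ is a neighbourhood of $p'$, a smaller ball of the $K_B$ construction fits inside $h(B)$, and the two nested splitting spheres exhibit $h(L)$ as a connected summand of a finite sum $K'_1 \# \cdots \# K'_{j_0}$. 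Only now does Schubert's unique factorization apply, forcing each $K_i$ to occur among the $K'_j$. Without the step $h(p)=p'$ you cannot guarantee that the complement of $h(B)$ meets only finitely many summands of $K_B$, and without the splitting-sphere picture you cannot pass from ``nested natural neighbourhoods'' to ``connected summand.'' Everything else in your write-up (infinite genus, Theorem \ref{teo:finite}, tameness off $p$, uncountability once the summand set is an invariant) is correct and matches the paper.
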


To prove the corollary we need the following auxiliary lemma. The technique used in the proof is standard and the lemma itself may be well known as there are similar results in the literature (see for instance \cite[Chapter 4]{yunlinhe} or \cite{foxharrold1}). We begin with a remark:

\begin{remark} \label{rem:wildpoint} Consider the following property (*) that may or may not hold for a given point $x$ of a knot $A$ in $\mathbb{R}^3$: if $\{V_n\}$ is a nested, closed neighbourhood basis of $x$ then there exists an index $N$ such that the inclusion induced homomorphism $\pi_1(V_N - A) \longrightarrow \pi_1(V_1 - A)$ has an Abelian image (see \cite[p. 988]{foxartin1} or \cite[p. 137]{moise2}). Returning to the infinite connected sum of Example \ref{ex:wild}, it is straightforward to see that this property holds for every point $x \in K - p$, and it can be shown by appropriately adapting the arguments in \cite[p. 988]{foxartin1} and \cite[p. 137 and 138]{moise2} that if (*) holds at $p$ then $K_i$ must be trivial from some $i$ onwards.
\end{remark}

\begin{lemma} Let $\{K_i\}$ and $\{K'_j\}$ be sequences of nontrivial, prime, tame knots and let $K$ and $K'$ be the toroidal sets constructed in Example \ref{ex:wild} as the infinite connected sums $\#_i K_i$ and $\#_j K'_j$. If $K$ and $K'$ are equivalent (that is, ambient homeomorphic) then each $K_i$ must appear in the sequence $\{K'_j\}$ (and viceversa).
\end{lemma}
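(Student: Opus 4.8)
The plan is to characterize, intrinsically and in an ambient‑homeomorphism‑invariant way, which prime knots occur in the sequence $\{K_i\}$. Call a knot $J\subseteq\mathbb{R}^3$ a \emph{summand} of a (possibly wild) knot $L\subseteq\mathbb{R}^3$ if there is a $2$--sphere $S\subseteq\mathbb{R}^3$ with $S\cap L$ consisting of exactly two points, bounding a ball $B$ with $L$ meeting both complementary regions, such that the arc $L\cap B$ becomes a knot of type $J$ once it is closed up by a trivial spanning arc lying in $\mathbb{R}^3\setminus{\rm int}\,B$. An ambient homeomorphism of $\mathbb{R}^3$ carries such a sphere, ball, arc and trivial closing arc to data of the same kind, so ``being a summand of'' is preserved by ambient homeomorphisms. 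Hence if $K$ and $K'$ are ambient homeomorphic they have exactly the same prime summands, and it suffices to prove that the set of prime summands of $K=\#_iK_i$ equals $\{[K_i]:i\geq1\}$; applying this to $K'$ as well then yields the lemma, both directions at once.

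One inclusion is easy: for each $i$, a $2$--sphere enclosing $C_i$ but no other cell $C_j$ and meeting $K$ only in the two strands leaving $C_i$ exhibits $K_i$ as a summand of $K$ (the enclosed tangle $K\cap C_i$ closes up to $K_i$ by the very construction of the infinite connected sum), and $K_i$ is prime by hypothesis. For the reverse inclusion, let $J$ be a prime summand of $K$, witnessed by a sphere $S$. Since the $K_i$ are nontrivial, $K$ is tame away from $p$ and wild at $p$ (Remark~\ref{rem:wildpoint} and the discussion after Example~\ref{ex:wild}), so $\{p\}$ is the whole wild set of $K$; in particular a general‑position argument lets us assume that $S$ avoids $p$, so $p$ lies in the interior of exactly one of the two balls $B_1,B_2$ bounded by $S$. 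The side not containing $p$ carries a tame ball‑arc pair, and there every object involved (the cells $C_j$ and $K$ itself) is polyhedral, so we may further normalize $S$—never moving it near $p$ and never changing $S\cap K$—so that it is disjoint from all but finitely many $C_j$ and meets each of the remaining ones in a collar of the spot where $K$ enters and leaves it. In that position the tame ball‑arc pair is the result of stringing together in order the full tangles $K\cap C_a,\dots,K\cap C_b$ of a block of consecutive indices, joined by trivial arcs, so its closure is $K_a\#K_{a+1}\#\cdots\#K_b$. Now $J$ is the closure of one of $B_1,B_2$; the one containing $p$ has a wild closure while $J$ is tame, so $J=K_a\#\cdots\#K_b$. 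A connected sum of two or more nontrivial knots is composite, so, $J$ being prime, the block is a single index and $J=[K_a]$. This proves that the prime summands of $K$ are exactly the $[K_i]$, and the lemma follows.

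The step demanding the most care is the normalization of the sphere $S$ relative to the cell family $\{C_i\}$: one must push $S$ off all the tiny cells clustering near $p$ (so that only finitely many cells are relevant) and then tidy up $S$ inside each of the finitely many remaining cells—removing, by innermost‑disk exchanges, intersection circles that bound disks on $S$ contributing only trivial sub‑arcs—without ever creating new points of $S\cap K$ or touching a neighbourhood of the wild point $p$. All of this is standard piecewise linear topology once one is safely away from $p$; the only genuinely delicate feature is the simultaneous presence of the wild point, which is handled by keeping it quarantined inside one of the two balls and carrying out every surgery on the tame side. Note also that only the easy fact ``a connected sum of two nontrivial knots is composite'' (equivalently, additivity and positivity of the genus) is needed, not full unique factorization of tame knots.
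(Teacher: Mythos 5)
Your overall strategy---packaging the conclusion into an ambient-homeomorphism-invariant ``set of prime summands'' and computing that set for $K$---is a legitimate reorganization, and your forward inclusion is fine. The paper instead tracks the homeomorphism $h$ directly: it uses the invariance of property (*) of Remark \ref{rem:wildpoint} to show $h(p)=p'$, cuts off a ball around $p$ to compare the tame truncations $K_1\#\cdots\#K_{i_0}$ and $K'_1\#\cdots\#K'_{j_0}$, and concludes from primeness (i.e.\ from uniqueness of prime factorization of tame knots). Both routes ultimately rest on that same fact, and that is where your write-up has a genuine gap.

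The gap is in the reverse inclusion. First, the assertion that ``a general-position argument lets us assume that $S$ avoids $p$'' is unjustified: general position is exactly what fails at a wild point, and a sphere passing through $p$ cannot simply be perturbed there while keeping $|S\cap K|=2$. (This is repairable: require in the definition of summand that $L$ be locally tame at the two puncture points; this is still ambient-invariant and forces $S$ to miss $p$.) Second, and more seriously, the normalization of $S$ relative to the cells $C_j$ is not ``standard piecewise linear topology once one is away from $p$'': the innermost-disk exchanges you invoke can get stuck precisely when the companion disk on $\partial C_j$ contains one of the two points where $K$ crosses $\partial C_j$, and resolving this is the content of Schubert's uniqueness theorem for prime decompositions. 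Consequently your closing claim that only ``a connected sum of two nontrivial knots is composite'' is needed is not correct: without uniqueness, a prime knot could a priori occur as a summand of $K_1\#\cdots\#K_N$ without being any $K_i$. The clean repair is essentially what the paper does: since $p\notin B$, a ball around $p$ containing all $C_j$ with $j>N$ can be chosen disjoint from $B$, so $S$ exhibits $J$ as a prime summand of the tame knot $K_1\#\cdots\#K_N$, and Schubert's theorem then gives $J=K_i$ for some $i\le N$. With those two repairs your argument is correct and becomes a slightly more symmetric variant of the paper's proof.
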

\begin{proof} Suppose that $K$ and $K'$ are equivalent, so that there exists a homeomorphism $h$ of $\mathbb{R}^3$ that sends $K$ onto $K'$. Let $p$ and $p'$ be the limit points of the sequence of defining knots for $K$ and $K'$. Recall the property (*) mentioned in Remark \ref{rem:wildpoint} above and notice that by definition (*) is invariant under ambient homeomorphisms; thus, it holds at $x \in K$ if and only it holds at $h(x) \in K'$. Since (*) holds at every point of $K$ and $K'$ except at $p$ and $p'$, it follows that $h(p) = p'$.

Consider any one $K_{i_0}$. Consider also (in the notation of Example \ref{ex:wild}) the cell $\bigcup_{i \geq i_0+1} C_i \cup p$ and enlarge it a little bit to obtain a polyhedral closed ball $B$ which is a neighbourhood of $p$ and such that $K$ meets $\partial B$ transversely in exactly two points. Let $\alpha$ be a polyhedral arc in $\partial B$ that joins the two points in $K \cap \partial B$ and let $L :=  (K-\mathring{B})\cup \alpha$, which is a tame knot that is the connected sum $K_1 \# \ldots \# K_{i_0}$.

Now consider $h(B)$ and observe that it contains $h(p) = p'$. Thus, using again the notation of Example \ref{ex:wild} but this time with primed symbols because it concerns the set $K'$, there exists $j_0$ big enough so that the cell $\bigcup_{j \geq j_0+1} C'_j \cup p'$ is contained in the interior of $h(B)$. Enlarging this cell a little bit and splitting $K'$ along its boundary as in the previous paragraph, we obtain a tame knot $L'$ that is the connected sum $K'_1 \# \ldots \# K'_{j_0}$. It is also clear by construction that $h(L)$ is a connected summand of $L'$; thus, $K'_1 \# \ldots \# K'_{j_0} = h(K_1) \# \ldots \# h(K_{i_0}) \# (\text{some other connected summand})$. Since all the $K_i$ and $K'_j$ are prime, it follows that $h(K_{i_0})$ must coincide with some $K'_j$ with $1 \leq j \leq j_0$. This completes the proof.
\end{proof}

\begin{proof}[Proof of Corollary \ref{cor:uncountably}] For any pair $p > 1$ and $q > 1$ of coprime integers let $T_{p,q}$ denote the torus knot represented by a simple closed curve on the surface of a standard, unknotted torus in $\mathbb{R}^3$, winding $p$ times in the longitudinal direction and $q$ times in the meridional one. These knots are prime, nontrivial, and $T_{p,q}$ is equivalent to $T_{p',q'}$ if and only if $\{p,q\} = \{p',q'\}$.

For each $i = 1,2,\ldots$ let $K_i$ be the torus knot $T_{i+1,i+2}$. It follows from the above that the $K_i$ are nontrivial, prime and pairwise inequivalent. Let $S$ be the (uncountable) set of infinite sequences of zeroes and ones that contain infinitely many ones. For each sequence $s \in S$ consider the infinite connected sum of those $K_i$ such that the $i$th entry in $s$ equals one. The previous lemma entails that two different sequences $s, s' \in S$ give rise to inequivalent connected sums. Each of these is a toroidal set of infinite genus that cannot be realized as an attractor.
\end{proof}

Finally, another family of (perhaps not so natural) examples of nonattracting toroidal sets $K$ can be constructed in the following way. Suppose that we define $K$ as the intersection of a nested sequence of solid tori $\{T_i\}$ such that: (i) each torus winds at least once in the previous one and (ii) the pattern of $(T_i,T_{i+1})$ is nontrivial for infinitely many $i$. We may call such a $K$ a \emph{strongly knotted} toroidal set. Condition (i) guarantees that $K$ is nontrivial and the $\{T_i\}$ forms a natural neighbourhood basis for $K$. From Equation \eqref{eq:schubert} and Theorem \ref{teo:compute_genus} it then follows that the genus of $K$ is infinite; hence, by Theorem \ref{teo:finite} a strongly knotted toroidal set cannot be realized as an attractor for a homeomorphism.

Our goal is now to prove the following result:

\begin{theorem} \label{teo:separate} Let $K \subseteq \mathbb{S}^3$ be an attractor for a homeomorphism and let $K'$ be its dual repeller. Suppose that $\check{H}^1(K;\mathbb{Z}) \neq \mathbb{Z}$. Suppose also that there exists a $2$--torus $S \subseteq \mathbb{S}^3$ that separates $K$ and $K'$. Then both $K$ and $K'$ are unknotted toroidal sets.
\end{theorem}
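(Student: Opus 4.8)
The plan is to read off the structure of $K$ and $K'$ from the attractor--repeller pair and then feed it into Theorem~\ref{teo:compute_genus} and Corollary~\ref{coro:attr}. Let $\overline{U}$ and $\overline{U'}$ be the closures of the two components of $\mathbb{S}^{3}\setminus S$, containing $K$ and $K'$ respectively, so that $\mathbb{S}^{3}=\overline{U}\cup_{S}\overline{U'}$ and $K\subseteq{\rm int}\,\overline{U}$. The basin of attraction of $K$ is $\mathbb{S}^{3}\setminus K'$, so $\overline{U}$ is a compact subset of it with $K$ in its interior; hence for $N$ large $f^{N}(\overline{U})\subseteq{\rm int}\,\overline{U}$, and then automatically $f^{-N}(\overline{U'})\subseteq{\rm int}\,\overline{U'}$. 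Replacing $f$ by $f^{N}$ (which changes neither $K$, $K'$ nor the hypotheses) we may assume $f(\overline{U})\subseteq{\rm int}\,\overline{U}$; then $\{f^{n}(\overline{U})\}_{n\ge 0}$ is a nested neighbourhood basis of $K$ whose members are all ambient homeomorphic to $\overline{U}$, and $\{f^{-n}(\overline{U'})\}_{n\ge 0}$ is such a basis of $K'$ with members ambient homeomorphic to $\overline{U'}$. (When invoking the results of the previous sections we regard the set in question as lying in $\mathbb{R}^{3}$, which is harmless since $K$ and $K'$ are disjoint compacta in $\mathbb{S}^{3}$.)

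Next I would record that $\check{H}^{1}(K;\mathbb{Z})\cong\check{H}^{1}(K';\mathbb{Z})$. By Alexander duality these groups are $H_{1}(\mathbb{S}^{3}\setminus K)$ and $H_{1}(\mathbb{S}^{3}\setminus K')$; writing $\mathbb{S}^{3}\setminus K'$ as the increasing union $\bigcup_{n}\bigl(\mathbb{S}^{3}\setminus f^{-n}(\overline{U'})\bigr)$, each term is homotopy equivalent to a copy of $\overline{U}$, with $H_{1}\cong\mathbb{Z}$, and the bonding maps are multiplication by one fixed winding integer $w$; the same direct limit $\varinjlim(\mathbb{Z}\xrightarrow{w}\mathbb{Z}\xrightarrow{w}\cdots)$ computes $\check{H}^{1}(K)$ from the basis $\{f^{n}(\overline{U})\}$. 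Thus $\check{H}^{1}(K)=\mathbb{Z}$ precisely when $w=\pm1$ and $\check{H}^{1}(K)=0$ precisely when $w=0$; by hypothesis $w\ne\pm1$.

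The core step is to prove that $S$ is unknotted. By Alexander's theorem one of $\overline{U},\overline{U'}$ is a solid torus, so $\mathbb{S}^{3}$ is the union along $S$ of either two unknotted solid tori (when $S$ is unknotted) or a solid torus $T$ whose core is a nontrivial knot $k$ and the exterior $E(k)$ (when $S$ is knotted). Suppose $S$ is knotted; using $\check{H}^{1}(K)\cong\check{H}^{1}(K')$ we may, after possibly exchanging $K\leftrightarrow K'$ and $f\leftrightarrow f^{-1}$, assume $\overline{U}=T$. Then $\{f^{n}(\overline{U})\}$ is a neighbourhood basis of $K$ by solid tori all ambient homeomorphic to the knotted solid torus $T$, so if $w\ne 0$ --- equivalently $\check{H}^{1}(K)\ne 0$, so that $K$ is a nontrivial toroidal set --- Theorem~\ref{teo:compute_genus} gives $g(K)=g(T)=g(k)\ge 1$, whereas Corollary~\ref{coro:attr}.(i) (which applies because $\check{H}^{1}(K)\ne\mathbb{Z}$) forces $g(K)=0$, a contradiction. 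Hence, provided $w\ne 0$, the torus $S$ is unknotted, and consequently both $\overline{U}$ and $\overline{U'}$ are unknotted solid tori.

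It then follows that $\{f^{n}(\overline{U})\}$ and $\{f^{-n}(\overline{U'})\}$ are neighbourhood bases of $K$ and $K'$ consisting of unknotted solid tori, so $K$ and $K'$ are unknotted; since $\check{H}^{1}(K)=\check{H}^{1}(K')\ne 0$ they are not cellular, hence they are unknotted toroidal sets. The delicate point --- the one I expect to be the real obstacle --- is the excluded case $w=0$, i.e.\ $\check{H}^{1}(K)=\check{H}^{1}(K')=0$: there the genus of $K$ need not detect the knottedness of $\overline{U}$ and Corollary~\ref{coro:attr} is silent, so this case must be handled separately, presumably by arguing from the precise position of $K$ inside $\overline{U}$ that the hypotheses force $K$ to be cellular (so that the statement is really about $\check{H}^{1}(K)\ne 0$), or otherwise ruling the configuration out.
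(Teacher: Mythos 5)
Your outline for the main case is essentially the paper's: one of $\overline{U},\overline{U'}$ is a solid torus which (after iterating $f$) gives a natural solid-torus neighbourhood basis of $K$ or $K'$, and Corollary \ref{coro:attr}.(i) then forces that torus to be unknotted, so $S$ is standardly embedded and both sides are unknotted solid tori. But there are two genuine gaps. The first is the one you flag yourself: the case $\check{H}^1(K;\mathbb{Z})=0$ (your $w=0$), where your argument says nothing and you cannot even conclude that $K$ is toroidal rather than cellular. The paper closes this with a separate claim: if $\check{H}^1(K;\mathbb{Z})$ were finitely generated then, by \cite[Theorem 2]{pacoyo1}, the inclusion $K\subseteq\mathcal{A}(K)$ would induce an isomorphism on $\check{H}^1(\,\cdot\,;\mathbb{Z})$; since this inclusion factors through $\overline{U}$, whose first cohomology is $\mathbb{Z}$, one deduces $\check{H}^1(K;\mathbb{Z})\cong\mathbb{Z}$, against the hypothesis. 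Hence $\check{H}^1(K;\mathbb{Z})$ is not finitely generated (in particular nonzero), and an Alexander-duality argument gives the same for $K'$. This is the missing idea: the hypotheses already rule out $\check{H}^1(K)=0$, but proving it requires an input about attractors that is external to everything you use. (Your substitute claim $\check{H}^1(K)\cong\check{H}^1(K')$, asserted via equality of the two winding numbers, is plausible via the linking pairing but is not justified either; the paper does not need it.)

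The second gap is tameness of $S$. The theorem explicitly does not assume $S$ polyhedral, and your very first structural step --- Alexander's theorem, giving that one of $\overline{U},\overline{U'}$ is a solid torus, and indeed that these closures are manifolds with boundary $S$ at all --- fails for wild $2$--tori. The paper repairs this by replacing $S$ with a nearby polyhedral torus that still separates $K$ from $K'$, using Bing's approximation theorem for surfaces together with Lemma \ref{lem:separate}. Relatedly, when you apply Theorem \ref{teo:compute_genus} to the basis $\{f^{n}(\overline{U})\}$ you need those tori to be polyhedral, which they need not be since $f$ is only a homeomorphism; this particular point is avoidable by applying Corollary \ref{coro:attr}.(i) directly to the single polyhedral natural neighbourhood $\overline{U}$ (after checking, as the paper does, that $K\subseteq\overline{U}$ is nonzero on $\check{H}^1$), but the possible wildness of $S$ itself must be dealt with first.
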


Here a $2$--torus just means a homeomorphic copy of the surface of a solid torus; that is, a surface homeomorphic to $\mathbb{S}^1 \times \mathbb{S}^1$. The theorem does not require that $S$ be polyhedral. By Alexander duality $S$ separates $\mathbb{S}^3$ into two connected components whose topological frontiers are precisely $S$. When $S$ is a polyhedral surface then the closures of both of them are compact $3$--manifolds whose common boundary is precisely $S$.



\begin{proof}[Proof of Theorem \ref{teo:separate}] Let $U$ and $U'$ denote the two connected components of $\mathbb{S}^3 -S$ and assume that $K \subseteq U$ and therefore $K' \subseteq U'$. In particular $\overline{U} \cap K' = \emptyset$ and, since $K'$ is the complementary of $\mathcal{A}(K)$ in $\mathbb{S}^3$, it follows that $\overline{U} \subseteq \mathcal{A}(K)$. Similarly $\overline{U'} \subseteq \mathcal{R}(K')$.
\smallskip

{\it Claim 1.} Both $K$ and $K'$ are connected.

{\it Proof of claim.} $\overline{U}$ is a connected, compact neighbourhood of $K$ contained in its region of attraction. Choosing $N$ so big that $f^N(\overline{U}) \subseteq U$ we have $K = \bigcap_{k \geq 0} f^{Nk}(\overline{U})$. These $f^{Nk}(\overline{U})$, as $k$ runs in the natural numbers, form a nested sequence of compact, connected neighbourhoods of $K$. Since $K$ is the intersection of a nested family of continua, it is a continuum. The same proof works for $K'$, this time using $f^{-1}$ and $\overline{U'}$.
\smallskip

{\it Claim 2.} We may assume without loss of generality that $S$ is polyhedral.

{\it Proof of claim.} We need to make use a lemma whose proof is given after the theorem:

\begin{lemma} \label{lem:separate} Let $S \subseteq \mathbb{S}^3$ be a closed connected surface that separates two connected compact sets $K$ and $K'$. Let $h : S \longrightarrow \mathbb{S}^3$ be an embedding that moves every point less than some $\epsilon > 0$. For small enough $\epsilon$, the surface $S' := h(S)$ still separates $K$ and $K'$.
\end{lemma}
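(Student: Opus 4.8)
The plan is to reduce the statement to a compactness-and-approximation argument. First I would recall that, by Alexander duality, the original surface $S$ separates $\mathbb{S}^3$ into two open connected components $U$ and $U'$ with $K \subseteq U$ and $K' \subseteq U'$; in particular $K \cap S = \emptyset$ and $K' \cap S = \emptyset$. Since $K$ and $K'$ are compact and $S$ is closed, there is a positive distance $\delta := \min\{\operatorname{dist}(K,S),\operatorname{dist}(K',S)\} > 0$. The key point is that if $h$ moves points less than $\epsilon < \delta$ then $S' = h(S)$ is still disjoint from both $K$ and $K'$: indeed any point of $S'$ lies within $\epsilon$ of a point of $S$, hence at distance at least $\delta - \epsilon > 0$ from $K$ and from $K'$.

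Next I would argue that $S'$ actually separates $K$ from $K'$. Since $S' = h(S)$ is a homeomorphic copy of a closed connected surface embedded in $\mathbb{S}^3$, Alexander duality (computing $\check{H}^0(\mathbb{S}^3 - S')$ from $\check{H}^1(S')$, or equivalently $\check{H}^1(S') = H^1(S^1 \times S^1) = \mathbb{Z}^2$ together with the long exact sequence of the pair) shows that $\mathbb{S}^3 - S'$ has exactly two connected components, say $W$ and $W'$, each having $S'$ as its topological frontier. It remains to check that $K$ and $K'$ lie in different components of $\mathbb{S}^3 - S'$. Suppose for contradiction that $K$ and $K'$ both lie in, say, $W$. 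Pick points $x \in K \subseteq W$ and $x' \in K' \subseteq W$ and join them by a path $\gamma$ inside the connected open set $W$, so $\gamma$ is disjoint from $S'$. On the other hand, $x \in U$ and $x' \in U'$, so any path from $x$ to $x'$ in $\mathbb{S}^3$ — in particular $\gamma$ — must cross $S$, the common frontier of $U$ and $U'$. Thus $\gamma$ meets $S$ at some point $z$; but then $z$ lies within $\epsilon$ of a point of $S'$ (since $h$ and $h^{-1}$ are $\epsilon$-close to the identity on the relevant sets), and by choosing $\epsilon$ small enough we can force $\gamma$, being a fixed compact set disjoint from $S'$ at positive distance, to avoid such a collision. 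To make this last contradiction clean, I would instead phrase it using the locally finite, compact structure directly: since $\gamma$ is compact and disjoint from $S'$, its distance to $S'$ is some $\rho > 0$; but $h^{-1}$ moves points of $S$ less than $\epsilon$, so the point $z \in \gamma \cap S$ satisfies $\operatorname{dist}(z, S') < \epsilon$, and taking $\epsilon < \rho$ gives the contradiction.

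The one subtlety I would address carefully — and the step I expect to be the main obstacle — is that $\epsilon$ must be chosen \emph{before} we know $\gamma$, so the circularity in the previous paragraph has to be broken. The fix is to work with the separating function rather than with an ad hoc path. Concretely: the hypothesis that $S$ separates $K$ and $K'$ means precisely that $K$ and $K'$ lie in distinct components of $\mathbb{S}^3 - S$. Consider the continuous function that is $+1$ on $\overline{U}$ and $-1$ on $\overline{U'}$ (well defined and continuous on $\mathbb{S}^3 - S$ and locally constant there). For $\epsilon$ small enough that $S' \subseteq (\mathbb{S}^3 - S)$ — which is guaranteed by $\epsilon < \delta$ as above, since $S' $ is then disjoint from neither $K$ nor $K'$ but more to the point is within the $\delta$-neighbourhood of $S$, and we may further shrink $\epsilon$ so that the $\epsilon$-neighbourhood of $S$ stays in a bicollar of $S$ — the surface $S'$ lies inside a product neighbourhood $S \times (-1,1)$ of $S$. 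Inside such a bicollar, $S'$ is a copy of a closed connected surface, and since $K, K'$ sit on opposite sides of $S$ (hence of the whole bicollar), the two components of $\mathbb{S}^3 - S'$ must again contain $K$ and $K'$ separately: any arc from $K$ to $K'$ enters the bicollar from one side and leaves from the other, and such an arc must cross $S'$ for homological reasons — its mod-$2$ intersection number with $S'$ inside the bicollar equals its intersection number with $S \times \{0\}$, which is odd. This bicollar/intersection-number argument is the technical heart; once it is in place, putting $\epsilon < \delta$ small enough that the $\epsilon$-neighbourhood of $S$ lies in a fixed bicollar finishes the lemma.

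Finally I would note that the existence of a bicollar neighbourhood of $S$ is automatic: $S$ is a closed, hence (as a copy of $S^1 \times S^1$) compact surface, it is two-sided in the orientable manifold $\mathbb{S}^3$ by the separation hypothesis, and a two-sided closed topological surface in a $3$-manifold has a bicollar by Brown's collaring theorem. With that, the parameter $\epsilon$ can be fixed depending only on $S$, $K$, $K'$, and the chosen bicollar, and the lemma follows.
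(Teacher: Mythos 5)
Your reduction to a bicollar is where the argument breaks down. The lemma is needed precisely for surfaces $S$ that may be wildly embedded: in the proof of Theorem \ref{teo:separate} it is applied to an arbitrary (possibly non-polyhedral) $2$--torus, and its whole purpose is to justify replacing such an $S$ by a tame approximation. A two-sided closed topological surface in a $3$--manifold need \emph{not} admit a bicollar; Brown's collaring theorem applies to locally flat (equivalently, locally collared) embeddings, and wild embeddings such as the Alexander horned sphere are two-sided yet have no bicollar (a bicollared $2$--sphere in $\mathbb{S}^3$ would bound balls on both sides by the generalized Sch\"onflies theorem). So the step you identify as the technical heart --- placing $S'$ inside a product neighbourhood $S \times (-1,1)$ and computing a mod-$2$ intersection number there --- is unavailable in exactly the cases the lemma must cover. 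Your earlier path argument correctly diagnoses the circularity ($\epsilon$ must be fixed before the path $\gamma$ is known), but the bicollar does not resolve it in general. The preliminary steps are fine: disjointness of $S'$ from $K$ and $K'$ for $\epsilon$ less than the distance from $S$ to $K \cup K'$, and the count of two complementary components (though note it is $\check{H}^2(S')$, not $\check{H}^1(S')$, that Alexander duality pairs with $\tilde{H}_0(\mathbb{S}^3 - S')$).

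The paper sidesteps the issue by staying entirely cohomological. Taking small connected open neighbourhoods $U \supseteq K$ and $U' \supseteq K'$, the separation condition is translated, via Alexander duality and the exact sequence of a pair, into surjectivity of the restriction $\check{H}^2(\mathbb{S}^3 - (U\cup U')) \longrightarrow \check{H}^2(S)$. Since $h$ moves points less than $\epsilon$, the homotopy $(1-t)h + tj$ from $h$ to the inclusion $j$ stays in an $\epsilon$--neighbourhood of $S$, hence in $\mathbb{S}^3 - (U\cup U')$ for small $\epsilon$, and surjectivity of the induced map is a homotopy-invariant property; reversing the duality argument for $S'$ finishes the proof. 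This requires no local flatness of $S$ whatsoever. To repair your proof you would need a substitute for the bicollar valid for wild surfaces, which is essentially what this cohomological reformulation provides.
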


The claim is a consequence of the lemma and the following two results: given any $\epsilon > 0$, by the approximation theorem for surfaces of Bing \cite[Theorem 7, p. 478]{bing5} there exists an embedding $h : S \longrightarrow \mathbb{S}^3$ that moves points less than $\epsilon > 0$ and such that $S'$ is locally polyhedral at each point; (ii) a locally polyhedral closed subset of $\mathbb{R}^3$ is polyhedral \cite[Lemma 1, p. 146]{bing2}. Choosing an appropriately small $\epsilon > 0$ we may therefore replace $S$ with a polyhedral $S'$ that still separates $K$ and $K'$.
\smallskip

When $S$ is polyhedral, as we shall assume from now on according to Claim 2, both closures $\overline{U}$ and $\overline{U'}$ are polyhedral manifolds. Furthermore, as a consequence of a classical theorem of Alexander \cite[Theorem 1, p. 107]{rolfsen1}, at least one of $\overline{U}$ and $\overline{U'}$ is a solid torus. By Alexander duality $H_1(U \cup U') = \check{H}^{1}(S) = \mathbb{Z} \oplus \mathbb{Z}$; since $H_1(U \cup U') = H_1(U) \oplus H_1(U')$ and one of these summands is known to be $\mathbb{Z}$ (the one corresponding to the component that is known to be a solid torus), both must be $\mathbb{Z}$.
\smallskip

{\it Claim 3.} $\check{H}^1(K;\mathbb{Z})$ is not finitely generated.

{\it Proof of claim.} Suppose it were. Then by \cite[Theorem 2, p. 2827]{pacoyo1} the inclusion of $K$ in its region of attraction $\mathcal{A}(K)$ would induce isomorphisms in \v{C}ech cohomology with $\mathbb{Z}$ coefficients. The inclusion factors through $\overline{U}$, which has $\check{H}^1 = \mathbb{Z}$, so we would have \[\xymatrix{\check{H}^1(\mathcal{A}(K);\mathbb{Z}) \ar[r]^-{\alpha} & \check{H}^1(\overline{U};\mathbb{Z}) = \mathbb{Z} \ar[r]^-{\beta} & \check{H}(K;\mathbb{Z})}\] with $\beta \alpha$ an isomorphism. In particular $\alpha$ is injective, so that $\check{H}^1(\mathcal{A}(K);\mathbb{Z})$, being isomorphic to a subgroup of $\mathbb{Z}$, must be isomorphic to $\mathbb{Z}$ itself. Thus the same must be true of $\check{H}^1(K;\mathbb{Z})$, contradicting the assumption of the theorem.

\smallskip

{\it Claim 4.} $\check{H}^1(K';\mathbb{Z})$ is not finitely generated.

{\it Proof of claim.} If it were, by Alexander duality so would $H_1(\mathcal{A}(K);\mathbb{Z})$ and also, by the universal coefficient theorem, $H^1(\mathcal{A}(K);\mathbb{Z})$. But then, again by \cite[Theorem 2, p. 2827]{pacoyo1} the cohomology $\check{H}^1(K;\mathbb{Z})$ would be finitely generated too, contradicting the previous claim.
\smallskip

Now we can finish the proof of the theorem. By the result of Alexander mentioned earlier, at least one of $\overline{U}$ or $\overline{U'}$ is a solid torus and, accordingly, at least one of $K$ or $K'$ is a toroidal set (a neighbourhood basis of solid tori can be obtained by iterating $\overline{U}$ or $\overline{U'}$ with the dynamics, as in the proof of Theorem \ref{teo:finite}). In the first case we apply Corollary \ref{coro:attr}.(i) directly to $K$; in the second, we apply it to the toroidal attractor $K'$ for the reversed dynamics (notice that $K'$ satisfies the appropriate hypothesis concerning its $1$--cohomology by Claim 4 above). In either case we conclude that $K$ or $K'$ is an unknotted toroidal set. Suppose, for instance, that it is $\overline{U}$ that is a solid torus. We claim that $\overline{U}$ is actually a natural neighbourhood of $K$; that is, the inclusion $K \subseteq \overline{U}$ induces a nonzero homomorphism in $\check{H}^1(\, \cdot\, ;\mathbb{Z})$. As in the proof of Claim 1, consider again the nested family of compact, connected neighbourhoods $f^{Nk}(\overline{U})$ for an appropriately big $N$ and write $K = \bigcap_{k \geq 0} f^{Nk}(\overline{U})$. If the inclusion $f^{N(k+1)}(\overline{U}) \subseteq f^{Nk}(\overline{U})$ induced the zero homomorphism in $\check{H}^1(\, \cdot\, ;\mathbb{Z})$ for some $k$ then the same would be true for every $k$ and so $\check{H}^1(K;\mathbb{Z})$ would be zero, contradicting Claim 3. Thus each inclusion $f^{N(k+1)}(\overline{U}) \subseteq f^{Nk}(\overline{U})$ must induce a nonzero homomorphism in $\check{H}^1(\, \cdot\, ;\mathbb{Z})$, and therefore the same is true of the inclusion $K \subseteq \overline{U}$. Thus $\overline{U}$ is a natural neighbourhood of $K$, so by Corollary \ref{coro:attr}.(i) it must be an unknotted torus. The fact that it is unknotted guarantees that its complement $\overline{U'}$ is also an unknotted torus and, in turn, implies that $K'$ is also an unknotted toroidal set.
\end{proof}

\begin{proof}[Proof of Lemma \ref{lem:separate}] Since $S$ is a closed connected surface, it separates $\mathbb{S}^3$ into two connected components. The condition that $S$ separates $K$ and $K'$ amounts to saying that each of the components of $\mathbb{S}^3 - S$ intersects $K \cup K'$, or equivalently $H_0(\mathbb{S}^3 - S, K \cup K') = 0$. Letting $U$ and $U'$ be small connected open neighbourhoods of $K$ and $K'$ respectively, we also have $H_0(\mathbb{S}^3 - S , U \cup U') = 0$. By Alexander duality this is equivalent to $\check{H}^3(\mathbb{S}^3 - (U \cup U') , S) = 0$. In turn, through the exact sequence for the pair $(\mathbb{S}^3 - (U \cup U'),S)$, this is yet equivalent to the inclusion induced homomorphism $\check{H}^2(\mathbb{S}^3-(U \cup U')) \longrightarrow \check{H}^2(S)$ being surjective.

Consider the map $h$ and the inclusion $j : S \longrightarrow \mathbb{S}^3$ and construct the homotopy $H_t := (1-t) h + t j$, which as $t$ varies from $0$ to $1$ connects $h$ and $j$. The condition that $h$ moves points less than $\epsilon$ implies that the image of $H_t$ is contained in an $\epsilon$--neighbourhood of $S$. Therefore if $\epsilon$ is so small that such a neighbourhood is disjoint from $U$ and $U'$, the maps $h$ and $j$ are homotopic in $\mathbb{S}^3-(U \cup U')$. Consider $h$ and $j$ as embeddings of $S$ into $\mathbb{S}^3 - (U \cup U')$. By the previous paragraph the map $j^* : \check{H}^2(\mathbb{S}^3 - (U \cup U')) \longrightarrow \check{H}^2(S)$ is surjective, and therefore so is $h^*$. Let $j' : S' \longrightarrow \mathbb{S}^3 - (U \cup U')$ be the inclusion and denote by $h_0$ the map $h$ thought of as a homeomorphism between $S$ and $S' = h(S)$; that is, $h_0 : S \longrightarrow S'$. Clearly $h= j' h_0$, and so $h^* = h_0^* (j')^*$. Since $h_0^*$ is an isomorphism and $h^*$ is surjective, so is $(j')^*$. Thus reversing the argument of the previous paragraph, now applied to $S'$ instead of $S$, we see that $S'$ also separates $K$ and $K'$.
\end{proof}

We close this section with a characterization of when a toroidal set can be realized as an attractor for a flow, rather than a homeomorphism. The definition of attractor in the continuous case is a direct translation of that given for homeomorphisms in page \pageref{def:attract}.


We first recall a definition (see for instance \cite{chedwards1}). Let $T$ and $T'$ be two solid tori such that $T'$ is contained in the interior of $T$. The two tori are said to be \emph{concentric} if $T - {\rm int}\ T'$ is homeomorphic to the Cartesian product of a $2$--torus and the closed interval $[0,1]$. We will make use of the following result of Edwards \cite[Theorem 11, p. 13]{chedwards1}. Suppose that $A$, $B$ and $C$ are solid tori such that $A \subseteq {\rm int}\ B$ and $B \subseteq {\rm int}\ C$, with $\partial B$ bicollared. Then $A$ and $C$ are concentric if and only if $B$ is concentric with both $B$ and $C$.

\begin{theorem} \label{teo:realize_flow} Let $K \subseteq \mathbb{R}^3$ be a toroidal set. The following are equivalent:
\begin{itemize}
	\item[(i)] $K$ can be realized as an attractor for a flow.
	\item[(ii)] For every nested neighbourhood basis $(T_i)$ of $K$ comprised of polyhedral solid tori, all the $T_i$ are concentric for big enough $i$.
	\item[(iii)] There exists a nested neighbourhood basis $(T_i)$ of $K$ comprised of polyhedral solid tori such that all the $T_i$ are concentric for big enough $i$.
\end{itemize}
\end{theorem}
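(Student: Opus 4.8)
I would prove the cycle of implications (ii)$\Rightarrow$(iii)$\Rightarrow$(i)$\Rightarrow$(ii). The implication (ii)$\Rightarrow$(iii) is immediate, since a toroidal set always admits a nested neighbourhood basis of polyhedral solid tori (by the results recalled in Section~\ref{sec:def}), and applying (ii) to any such basis yields (iii). The content lies in the other two implications, and Edwards' theorem recalled just above the statement — which makes concentricity transitive and shows it is inherited by intermediate tori — will be used as a bookkeeping tool to pass information between different bases.

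For (iii)$\Rightarrow$(i), let $(T_i)$ be as in (iii), so that $T_i\setminus\operatorname{int}T_{i+1}\cong\mathbb{T}^2\times[0,1]$ for $i\ge i_0$. Gluing these product cobordisms gives $T_{i_0}\setminus\operatorname{int}T_{i_0+n}\cong\mathbb{T}^2\times[0,n]$, and in the limit a homeomorphism $h\colon\mathbb{T}^2\times[0,\infty)\to T_{i_0}\setminus K$ with $h(\mathbb{T}^2\times\{0\})=\partial T_{i_0}$ and $K$ playing the role of the ``end at infinity''. One then defines a flow on $\mathbb{R}^3$ which is the identity on $K$, is conjugate by $h$ on $T_{i_0}\setminus K$ to the translation $(x,s)\mapsto(x,s+t)$ (reparametrised so as to be complete and extended across a bicollar of $\partial T_{i_0}$), and is stationary outside a compact neighbourhood of $T_{i_0}$. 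The sets $h(\mathbb{T}^2\times[s,\infty))\cup K$ form a neighbourhood basis of $K$ consisting of positively invariant sets, which yields Lyapunov stability, and every compact subset of $\operatorname{int}T_{i_0}$ lies below some level $s$ and is therefore swept into each of these sets, so $K$ is an attractor. The only delicate point is continuity of the flow along the (possibly wild) set $K$: the translation flow is continuous there precisely when the vertical fibres $h(\{x\}\times[s,s+L])$ have $\mathbb{R}^3$-diameter tending to $0$ uniformly in $x$ as $s\to\infty$. This is arranged by refining the basis: by uniform continuity of the homeomorphisms $T_i\setminus\operatorname{int}T_{i+1}\cong\mathbb{T}^2\times[0,1]$ one subdivides each shell into many thin sub-shells, obtaining (polyhedral, via Moise's theorem) a nested basis whose shells are foliated by arcs of diameter $\le\varepsilon_i$ with $\varepsilon_i\to0$; then each fibre $h(\{x\}\times[s,s+L])$ has diameter at most $(L+2)\sup_{k\ge s}\varepsilon_k\to0$. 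Such a refined basis still satisfies (iii), since refining only introduces further concentric tori.

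For (i)$\Rightarrow$(ii) I would invoke the structure theory of attractors of flows: $K$ has a positively invariant compact neighbourhood $N$, inside its basin and as small as we wish, whose frontier is a bicollared closed surface transverse to the flow, with $N\setminus K$ homeomorphic (via the flow) to $(\partial N)\times[0,\infty)$. Let $(T_i)$ be any nested polyhedral solid torus basis; for $i$ large, $T_i\subseteq\operatorname{int}N$ and, by Remark~\ref{rem:natural}, $T_i$ is a natural neighbourhood of $K$, so the inclusion $K\subseteq T_i$ is nonzero on $\check H^1$. This forces $\partial T_i$ to be incompressible in $(\partial N)\times[0,\infty)$: a compressing disk on the $K$-side would be isotopic to a meridian disk of $T_i$ missing $K$, and cutting $T_i$ along it would exhibit a $3$-cell $B$ with $K\subseteq B\subseteq T_i$, so that $K\subseteq T_i$ would factor through $\check H^1(B)=0$ — a contradiction; a compressing disk on the other side is excluded similarly, using again that $K$ is toroidal. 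Since $\partial T_i$ is then an incompressible torus separating the two ends of $(\partial N)\times[0,\infty)$, the surface $\partial N$ is itself a $2$-torus, and by Waldhausen's classification of incompressible surfaces in $\mathbb{T}^2\times I$ each $\partial T_i$ is isotopic there to a level torus $\mathbb{T}^2\times\{c_i\}$; for $i<j$ one has $c_i<c_j$, so $T_i\setminus\operatorname{int}T_j$ is a copy of $\mathbb{T}^2\times[c_i,c_j]\cong\mathbb{T}^2\times I$ and $T_i$, $T_j$ are concentric, which is (ii). (Alternatively one proves (i)$\Rightarrow$(iii), producing just one nested concentric polyhedral basis from the collar structure of $N$, and then deduces (iii)$\Rightarrow$(ii) from Edwards' theorem.)

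The main obstacles are the two technical cores just indicated: in (iii)$\Rightarrow$(i), securing continuity of the constructed flow along $K$ — which is exactly why one must pass to a neighbourhood basis with ``thin'' shells; and in (i)$\Rightarrow$(ii), the passage from the dynamics to the product-collar neighbourhood $N$ and the $3$-manifold-topology step (incompressibility forced by the nonzero winding of $K$, and Waldhausen's theorem) that identifies $\partial N$ as a torus and the regions between consecutive $T_i$ as trivial product cobordisms. One must also be careful that this last step genuinely rules out $\partial N$ being a sphere or a higher-genus surface, which is where the hypothesis that $K$ is toroidal (equivalently, that it has solid-torus neighbourhoods winding nontrivially) is used.
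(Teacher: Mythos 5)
Your cycle (ii)$\Rightarrow$(iii)$\Rightarrow$(i)$\Rightarrow$(ii) mirrors the paper's (i)$\Rightarrow$(ii)$\Rightarrow$(iii)$\Rightarrow$(i), and your (iii)$\Rightarrow$(i) is essentially a from-scratch re-derivation of the result the paper simply cites (\cite[Proposition 26]{mio5}): glue the product shells, translate along the $[0,\infty)$ factor, and control continuity at $K$ by passing to thin sub-shells. That part is sound (the polyhedrality of the refined basis is not even needed). The genuine gap is in (i)$\Rightarrow$(ii). Your incompressibility argument for $\partial T_i$ inside $N-K$ hinges on the inclusion $K\subseteq T_i$ being nonzero on $\check{H}^1$, which you justify by Remark \ref{rem:natural}; but that remark, and the very notion of natural neighbourhood, applies only to \emph{nontrivial} toroidal sets, whereas Theorem \ref{teo:realize_flow} is stated for arbitrary toroidal sets, including homologically trivial ones such as the Whitehead continuum of Example \ref{ex:whitehead} (which \emph{is} an attractor for a homeomorphism, so it is exactly the case one must dispose of). For such $K$ there are no natural neighbourhoods, meridian disks of $T_i$ missing $K$ do exist, and your argument collapses. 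You must first extract from hypothesis (i) that $\check{H}^1(K;\mathbb{Z})=\mathbb{Z}$; the paper does this by showing the Lyapunov block $P=L^{-1}[0,1]$ is a compact manifold, so $\check{H}^1(K)\cong H^1(P)$ is finitely generated, hence $0$ or $\mathbb{Z}$, and the value $0$ forces $\partial P=\mathbb{S}^2$ and $K$ cellular, contradicting toroidality. A second, smaller imprecision: excluding a compressing disk on the \emph{outside} of $T_i$ is not quite ``similar'' to the inside case; there $\partial D$ is forced to be a preferred longitude, $T_i$ together with a $2$--handle becomes a $3$--cell $B$ with $K\subseteq B\subseteq N$, and one needs the inclusion $K\subseteq N$ (not $K\subseteq T_i$) to be nonzero on $\check{H}^1$, which again presupposes nontriviality.

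Beyond the gap, the routes genuinely differ in (i)$\Rightarrow$(ii). You go through $3$--manifold topology in the collar $N-K\cong(\partial N)\times[0,\infty)$: incompressibility of $\partial T_i$, Waldhausen's classification of incompressible surfaces in $\mathbb{T}^2\times I$, and the fact that disjoint horizontal-isotopic tori cobound a product. The paper avoids all of this: it proves directly that $P$ is a solid torus (Lefschetz duality plus the observation that $\pi_1(P)$ injects into $\pi_1(T)\cong\mathbb{Z}$, then Alexander's torus theorem), notes that the flow exhibits $P$ and $\varphi(P,\tau)$ as concentric, and squeezes each $T_i$ between them using Edwards's theorem quoted before the statement. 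Your approach, once repaired as above, would work and is perhaps more geometrically transparent, but it imports heavier machinery (Waldhausen, plus the Chewning--Owen fact that ${\rm fr}\,P$ is a closed $2$--manifold, which you fold into ``structure theory'' but which is a nontrivial input the paper is careful to cite), and each of the asserted $3$--manifold steps would need a reference or proof.
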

\begin{proof}

(i) $\Rightarrow$ (ii) Denote by $\varphi : \mathbb{R}^3 \times \mathbb{R} \longrightarrow \mathbb{R}^3$ the flow that realizes $K$ as an attractor and let $\mathcal{A}(K)$ be the basin of attraction of $K$. Let $L : \mathcal{A}(K) \longrightarrow [0,+\infty)$ be a proper Lyapunov function for $K$. This means that $L$ is a continuous function such that: (a) $L$ is strictly decreasing along the trajectories of $\varphi$ in $\mathcal{A}(K) - K$, (b) $L^{-1}(0) = K$ and (c) $L^{-1}[0,c]$ is compact for every $c > 0$. Let $P := L^{-1}[0,1]$.
\smallskip

{\it Claim.} $P$ is a tame solid torus.
\smallskip

{\it Proof of claim.} This is essentially stated by Chewning and Owen in \cite[Theorem 6, p. 76]{chewningowen1}. However, since the details of the proof are not given there, we provide them for completeness. From (a), (b) and (c) above it is easy to see that $P$ is a compact, positively invariant neighbourhood of $K$ and its topological frontier is ${\rm fr}\ P = L^{-1}(1)$. Moreover, ${\rm fr}\ P$ is a compact $2$--manifold. This nontrivial assertion is a consequence of the theory of generalized manifolds, and we refer the reader to the paper by Chewning and Owen cited earlier for a detailed proof and references. Using (a) and (c) it is straightforward to check that $\varphi$ provides an embedding of $({\rm fr}\ P) \times [0,1)$ onto an open neighbourhood $W$ of ${\rm fr}\ P$ in $P$. Since $({\rm fr}\ P) \times [0,1)$ is a $3$--manifold with boundary, so is $W$. Thinking of $P$ as the union of its topological interior (which is a $3$--manifold because it is open in $\mathbb{R}^3$) and $W$ we see that $P$ itself is a compact $3$--manifold whose boundary is ${\rm fr}\ P$. We shall therefore change our notation and write $\partial P$ instead of ${\rm fr}\ P$. Notice similarly that $\varphi$ also provides a embedding of $({\rm fr}\ P) \times [-1,1]$ onto a neighbourhood of $\partial P$, so the latter surface is bicollared in $\mathbb{R}^3$. This implies that it is a tame surface (see for instance \cite[Lemma 4, p. 13]{chedwards1}) and also that $P$ is a tame subset of $\mathbb{R}^3$ since any homeomorphism of $\mathbb{R}^3$ that sends $\partial P$ onto a polyhedron will also send the whole of $P$ onto a polyhedron.

By \cite[Theorem~3.6, pg. 233]{kapitanski1} the inclusion $K \subseteq P$ induces isomorphisms in \v{C}ech cohomology; therefore, $\check{H}^1(K) = \check{H}^1(P) = H^1(P)$. The cohomology of a compact manifold is finitely generated; thus, $\check{H}^1(K)$ is finitely generated and so it must be either $0$ or $\mathbb{Z}$ since $K$ is a toroidal set. If it were $0$, then also $H^1(P) = 0$ and a straightforward computation using Lefschetz duality (and $H^0(P) = \check{H}^0(P) = \mathbb{Z}$ and $H^q(P) = \check{H}^q(K) = 0$ for every $q \geq 2$) would yield that $\partial P$ has the homology of the $2$--sphere. Thus $\partial P$ would be the $2$--sphere and by the Sch\"onflies theorem for bicollared spheres (a deep theorem due to Brown \cite{brown1}) $P$ would be a $3$--cell and $K$ would be cellular, contradicting the assumption. (More details of this somewhat hasty argument can be found in the proof of \cite[Theorems 3 and 4, pp. 74 and 75]{chewningowen1}).

We are left with the only possibility that $H^1(P) = \mathbb{Z}$. Again using Lefschetz duality this implies that $\partial P$ has the homology of a $2$--torus; hence it is a $2$--torus. Let $T$ be a solid torus that is a neighbourhood of $K$ contained in $P$, which exists because $K$ is toroidal. Using the fact that $K$ attracts compact subsets of its basin of attraction, choose $\tau > 0$ so big that $P' := \varphi(P,\tau)$ is contained in $T$. Notice that the flow $\varphi$, acting on the time interval $[0,\tau]$, provides a strong deformation retraction of $P$ onto $P'$; thus, the inclusion $P' \subseteq P$ is a homotopy equivalence. In particular, it induces an isomorphism $\pi_1(P') \longrightarrow \pi_1(P)$. However, this homomorphism factors through $\pi_1(T) \cong \mathbb{Z}$, so it follows that $\pi_1(P)$ is a subgroup of $\mathbb{Z}$; hence it must be $\pi_1(P) \cong \mathbb{Z}$ (otherwise $\pi_1(P) = 0$, which can be ruled out arguing as in the previous paragraph). We thus have that $P$, which is the closure of one of the complementary domains of the $2$--torus $\partial P$, has $\pi_1 \cong \mathbb{Z}$. Invoking again the theorem of Alexander \cite[Theorem 1, p. 107]{rolfsen1} already used in the proof of Theorem \ref{teo:separate} we conclude that $P$ must be a solid torus. (Let us mention that the theorem of Alexander applies to polyhedral $2$--tori, but since $\partial P$ is tame and can therefore assumed to be polyhedral after performing an ambient homeomorphism, it also applies to $\partial P$). This concludes the proof of the claim. $_{\blacksquare}$
\smallskip

Now let $(T_i)$ be a nested neighbourhood basis of $K$ comprised of polyhedral solid tori. Let $i_0$ be big enough so that $T_i \subseteq {\rm int}\ P$ for every $i \geq i_0$. We claim that any such $T_i$ is concentric with $P$, and this in turn implies that any two $T_i$ and $T_j$ with $i,j \geq i_0$ are concentric, which is what we need to prove. Let, then, $T_i$ be contained in ${\rm int}\ P$. Choose $\tau > 0$ so big that $P' := \varphi(P,\tau)$ is contained in the interior of $T_i$. The flow itself provides a homeomorphism from $(\partial P) \times [0,\tau]$ onto $P - {\rm int}\ P'$, so $P$ and $P'$ are concentric. By the results of Edwards \cite[Theorem 11, p. 13]{chedwards1} mentioned before the statemente of the theorem, $T_i$ is also concentric with $P$.

(ii) $\Rightarrow$ (iii) This is obvious.

(iii) $\Rightarrow$ (i) Let $(T_i)$ be a nested neighbourhood basis of $K$ as described in (iii) and assume, possibly after discarding the first few elements of $(T_i)$, that all the $T_i$ are concentric. The region $R_i := T_i - {\rm int}\ T_{i+1}$ comprised between each $T_i$ and the next $T_{i+1}$ is then homeomorphic to the Cartesian product of a $2$--torus $\Sigma$ and a closed interval; that is, there exist homeomorphisms $h_i : R_i \longrightarrow \Sigma \times [i,i+1]$. Observe that the boundary of $R_i$ is the disjoint union of $\partial T_i$ and $\partial T_{i+1}$, and we may assume without loss of generality that these are carried by $h_i$ onto correspond to $\Sigma \times \{i\}$ and $\Sigma \times \{i+1\}$ respectively. We would like to paste all the $h_i$ together to produce a homeomorphism between $\bigcup R_i = T_1 - K$ onto $\Sigma \times [1,+\infty)$, because in that case $T_1 - K$ can be compactified to the $3$--manifold $S \times [0,+\infty]$ and this is sufficient to guarantee that $K$ can be realized as an attractor for a flow (see \cite[Proposition 26, p. 6175]{mio5}). The minor technical problem arises that two consecutive $h_i$ and $h_{i+1}$ need not coincide on their common domain $R_i \cap R_{i+1} = \partial T_{i+1}$, but this can be easily fixed by replacing each $h_i$ with another $h'_i$ inductively as follows. First set $h'_1 := h_1$. Now consider the restrictions of $h'_1$ and $h_2$ to $R_1 \cap R_2$: both are homeomorphisms onto $\Sigma \times \{2\}$, and so we can define a homeomorphism $t$ of $\Sigma$ as \[t : \Sigma \ni s \longmapsto \text{first coordinate of\ } (h'_1 \circ h_2^{-1}(s,2)) \in \Sigma.\] Set $h'_2 := (t \times {\rm id}) \circ h_2$. This is a new homeomorphism of $R_2$ onto $\Sigma \times [2,3]$ that, now, coincides with $h_1$ on $R_1 \cap R_2$. Indeed, for any $p \in R_1 \cap R_2$ we have $h_2(p) = (s,2)$ for some $s \in \Sigma$ and \[h'_2(p) = (t \times {\rm id}) \circ h_2(p) = (t \times {\rm id}) (s,2) = ((\text{first coordinate of } h'_1(p),2) = h'_1(p).\] Replacing $h_2$ with $h'_2$ and performing the same construction we may obtain $h'_3, h'_4, \ldots$ with the required property that they agree on $R_i \cap R_{i+1}$ and therefore, taken together, define a homeomorphism between $\bigcup R_i$ and $T_1 - K$, as we needed.
\end{proof}

\begin{example} The Whitehead continuum and generalized solenoids do not satisfy condition (iii) of the preceding theorem; thus, none of them can be realized as an attractor for a flow. This can also be proved using different techniques: the case of solenoids is an immediate consequence of the theorem that attractors for flows must have the Borsuk shape of a finite polyhedron (\cite{gunthersegal1}, \cite{sanjurjo1}) and both solenoids and the Whitehead continuum were already discussed using yet another approach in \cite[Examples 46 and 47, p. 3622]{mio6}.
\end{example}

\begin{remark} It follows from the proof of the preceding theorem that a toroidal set $K$ that is an attractor for a flow must have $\check{H}^1(K;\mathbb{Z}) = \mathbb{Z}$. This homological obstruction, which is simpler but weaker than the condition that $K$ has a neighbourhood basis of concentric solid tori, is in fact enough to prove that neither the Whitehead continuum nor a generalized solenoid can be realized as attractors for a flow, since they have zero and non finitely generated $\check{H}^1(\, \cdot\, ; \mathbb{Z})$ respectively. However, if one modifies the definition of the Whitehead continuum by having, in the notation of Example \ref{ex:whitehead}, $h(T_0)$ wind an additional time inside $T_0$, then the resulting toroidal set has $\check{H}^1( \, \cdot \, ; \mathbb{Z}) = \mathbb{Z}$ so there is no homological obstruction to it being realizable as an attractor for a flow. Theorem \ref{teo:realize_flow}, however, still shows that in fact it cannot.
\end{remark}

\section{A comparison with previous techniques}

In a previous paper \cite{mio5} one of the authors associated to every continuum $K \subseteq \mathbb{R}^3$ a number $r(K) = 0,1,2,\ldots,\infty$ which, in some sense, measures how wildly $K$ sits in $\mathbb{R}^3$. This number was shown to be finite for attractors and then used to exhibit examples of continua $K$ that cannot be realized as attractors because they have $r(K) = \infty$. In this final section we recall the definition of $r(K)$ and consider it within the realm of toroidal sets.

Let $K \subseteq \mathbb{R}^3$ be a compact set. Tiling $\mathbb{R}^3$ with small cubes it is easy to see that $K$ has a neighbourhood basis all of whose elements are compact $3$--manifolds which can be assumed to be polyhedral. Given a number $r = 0,1,2,\ldots$ consider the following property $(P_r)$ that $K$ may or may not have: $K$ has a neighbourhood basis of compact, polyhedral $3$--manifolds $N_k$ such that ${\rm dim}\ H_1(N_k;\mathbb{Z}_2) = r$. Then set $r(K)$ to be the minimum $r$ such that $(P_r)$ holds or $r(K) = +\infty$ if $K$ does not have the property $(P_r)$ for any $r$; that is, if for any neighbourhood basis of compact, polyhedral $3$--manifolds $N_k$ one has ${\rm dim}\ H_1(N_k;\mathbb{Z}_2) \rightarrow +\infty$ as $k \rightarrow +\infty$. (The use of $\mathbb{Z}_2$ coefficients is not strictly necessary, but it is convenient for technical reasons. In particular, owing to the universal coefficient theorem, we may and will identify $H_1$ and $H^1$ in the sequel).

Because $r(K)$ is defined in terms of neighbourhoods of $K$, it does not seem unreasonable to believe that it may, to some extent, measure how $K$ sits in $\mathbb{R}^3$. And, indeed, it can be shown that $r(K)$ is bounded below by ${\rm dim}\ H_1(K;\mathbb{Z}_2)$ (which is a topological invariant of $K$; that is, it does not depend on its embedding in $\mathbb{R}^3$) and coincides with it when $K$ is a polyhedron, but it may be strictly bigger when $K$ is wildly embedded in $\mathbb{R}^3$. Clearly $r(K)$ is an invariant under ambient homeomorphisms: if $h$ is a homeomorphism of $\mathbb{R}^3$, then $r(K) = r(h(K))$. 

As for toroidal sets, we have the following remark:

\begin{remark} \label{rem:r_tor} If $K\subseteq\mathbb{R}^3$ is a toroidal set, then $r(K)=1$.
\end{remark}

We will use the following property of $r(K)$: if $\check{H}^2(K;\mathbb{Z}_2) = 0$ and $r(K) = 0$, then $K$ has a basis of neighbourhoods each of which is a disjoint union of finitely many polyhedral cells (see the proof of \cite[Theorem~16, p. 3601]{mio6}). It follows easily from this statement that $\check{H}^1(K;\mathbb{Z}) = 0$ and each connected component of $K$ is cellular. In particular, if $K$ is connected then it is cellular itself.

\begin{proof}[Proof of Remark \ref{rem:r_tor}] Since $K$ has a basis of neighbourhoods comprised of solid tori, clearly $r(K)\leq 1$. Suppose that $r(K) = 0$. We saw earlier that toroidal sets are connected and have vanishing \v{C}ech cohomology in degree $2$, so according to the previous paragraph, if $r(K) = 0$ then $K$ would be cellular. However, this is excluded by definition.
\end{proof}

It is easily shown that the invariant $r(K)$ is finite whenever $K$ is an attractor. Then, by proving that the Alexander horned sphere and some other compacta have an infinite $r(K)$, it was shown in \cite{mio5} that none of them can be realized as attractors. It is a consequence of the Remark \ref{rem:r_tor} that this approach is not sensitive enough to rule out a toroidal set $K$ as an attractor. This is to be expected, since $r(K)$ is an invariant of homological nature and, as it is well known, homology does not detect knottedness. The genus of a toroidal set, on the other hand, is designed to do precisely this and therefore provides a finer invariant (albeit only applicable to toroidal sets, whereas $r(K)$ can be computed for any compactum $K \subseteq \mathbb{R}^3$).

The following result is a partial converse to Remark \ref{rem:r_tor}:

\begin{theorem} \label{teo:rdeK} Let $K \subseteq \mathbb{R}^3$ be a continuum with $r(K) = 1$ and $\check{H}^2(K;\mathbb{Z}) = 0$. If $\check{H}^1(K;\mathbb{Z}) \neq 0, \mathbb{Z}$ then $K$ is a toroidal set.
\end{theorem}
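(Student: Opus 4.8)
The plan is to upgrade the $3$--manifold neighbourhood basis provided by $r(K)=1$ into a neighbourhood basis of solid tori, and to read off non-cellularity directly from $\check{H}^1(K;\mathbb{Z})\neq 0$.

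\emph{Step 1: a basis of solid tori or knot exteriors.} Since $r(K)=1$ there is a nested neighbourhood basis $\{N_k\}$ of $K$ by compact polyhedral $3$--manifolds with $\dim H_1(N_k;\mathbb{Z}_2)=1$. Replacing each $N_k$ by the connected component containing $K$ (still a neighbourhood of $K$, since its interior in $\mathbb{R}^3$ contains that of $N_k$) and observing that, if infinitely many of these components had $\dim H_1(\,\cdot\,;\mathbb{Z}_2)=0$, we would obtain a basis witnessing $(P_0)$ and hence $r(K)=0$, we may assume each $N_k$ connected with $\dim H_1(N_k;\mathbb{Z}_2)=1$; in particular $H^1(N_k;\mathbb{Z})\cong\mathbb{Z}$. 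By the ``half lives, half dies'' duality inequality over $\mathbb{Z}_2$, the image of $H_1(\partial N_k;\mathbb{Z}_2)\to H_1(N_k;\mathbb{Z}_2)$ has dimension $\tfrac12\dim H_1(\partial N_k;\mathbb{Z}_2)$, so the total genus of $\partial N_k$ is at most one: every boundary component is a sphere, except possibly one torus. Capping each spherical component with the $3$--cell it bounds on the side disjoint from $\mathrm{int}\,N_k$ yields a compact connected polyhedral $3$--manifold $\hat N_k\subseteq\mathbb{S}^3$ with $H_1(\hat N_k)\cong H_1(N_k)$, containing $K$ in its interior, and with $\partial\hat N_k$ either empty or a single torus. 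It cannot be empty, for a closed connected $3$--submanifold of $\mathbb{S}^3$ equals $\mathbb{S}^3$ while $H_1(\hat N_k;\mathbb{Z}_2)\neq 0$. Thus $\partial\hat N_k$ is a torus $\tau_k$ and $\hat N_k=\overline{A_k}$, where $A_k$ is the component of $\mathbb{S}^3-\tau_k$ containing $K$. Here the hypothesis $\check{H}^2(K;\mathbb{Z})=0$ is decisive: by Alexander duality $\mathbb{S}^3-K$ is connected, so any $x\in\mathbb{S}^3-K$ can be joined to $\infty$ by a path in $\mathbb{S}^3-K$, and this path misses $\tau_k\subseteq N_k$ once $k$ is large; hence $x$ eventually lies in the unbounded component $\mathbb{S}^3-\hat N_k$, which shows that the $\hat N_k$ shrink to $K$. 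After passing to a nested subsequence, $\{\hat N_k\}$ is a neighbourhood basis of $K$, and by the theorem of Alexander on tori in $\mathbb{S}^3$ (the one invoked in the proof of Theorem~\ref{teo:separate}) each $\hat N_k$ is either a solid torus $\cong\mathbb{S}^1\times\mathbb{D}^2$ (possibly knotted) or the exterior of a non-trivial knot $\kappa_k$.

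\emph{Step 2: ruling out knot exteriors.} If only finitely many $\hat N_k$ are knot exteriors, then after discarding them $\{\hat N_k\}$ is a neighbourhood basis of $K$ by solid tori, and $K$ is not cellular because $\check{H}^1(K;\mathbb{Z})\neq 0$; hence $K$ is toroidal and we are done. So suppose, for a contradiction, that $\hat N_k$ is the exterior of a non-trivial knot $\kappa_k$ for infinitely many $k$; passing to that subsequence, assume this for all $k$. Then $W_k:=\overline{\mathbb{S}^3-\hat N_k}$ is a knotted solid torus with core $\kappa_k$ and $W_k\subseteq\mathrm{int}\,W_{k+1}$; a standard linking-number computation identifies the winding number $w_k$ of $W_k$ inside $W_{k+1}$ (up to sign) with the exponent of the bonding map $H^1(\hat N_k;\mathbb{Z})\to H^1(\hat N_{k+1};\mathbb{Z})$, so that $\check{H}^1(K;\mathbb{Z})=\varinjlim\bigl(\mathbb{Z}\xrightarrow{w_1}\mathbb{Z}\xrightarrow{w_2}\cdots\bigr)$. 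Since $\check{H}^1(K;\mathbb{Z})\neq 0$, only finitely many $w_k$ vanish, so after discarding finitely many terms we may assume $w_k\neq 0$ for all $k$. Now Schubert's inequality \eqref{eq:schubert}, applied to $W_k\subseteq\mathrm{int}\,W_{k+1}$, gives
\[ g(\kappa_k)=g(W_k)\ \geq\ |w_k|\cdot g(W_{k+1})+g(W_{k+1},W_k)\ \geq\ g(\kappa_{k+1}). \]
Hence $\{g(\kappa_k)\}$ is a non-increasing sequence of integers $\geq 1$, so it is eventually constant; once it is constant the displayed inequality forces $|w_k|=1$, so the bonding maps of the direct system are eventually isomorphisms and $\check{H}^1(K;\mathbb{Z})\cong\mathbb{Z}$, contradicting the hypothesis. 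This contradiction completes the proof.

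The technically delicate part is Step 1 --- above all, checking that the enlarged manifolds $\hat N_k$ still form a neighbourhood basis of $K$ rather than merely a sequence of neighbourhoods --- and it is exactly there that $\check{H}^2(K;\mathbb{Z})=0$ (equivalently, the connectedness of $\mathbb{S}^3-K$) is used. Once one is reduced to a neighbourhood basis of solid tori or knot exteriors, the genus argument of Step 2 is straightforward and mirrors the reasoning already used in Section~\ref{sec:genus}.
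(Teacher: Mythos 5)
Your overall strategy is the same as the paper's: promote the $3$--manifold basis coming from $r(K)=1$ to a basis of compact manifolds with a single torus boundary component, invoke Alexander's torus theorem, and kill the ``knot exterior'' alternative by running Schubert's inequality \eqref{eq:schubert} on the complementary solid tori $W_k$ and comparing with $\check{H}^1(K;\mathbb{Z})\neq 0,\mathbb{Z}$ via the winding numbers. Step 2 is essentially identical to the paper's endgame. The difference is in the capping: the paper adjoins \emph{all bounded complementary components} of $N_k$ and controls $H_1,H_2$ of the result by Mayer--Vietoris (Lemma \ref{lem:cap_holes}), then uses $\check{H}^2(K;\mathbb{Z}_2)=0$ together with Lemma \ref{lem:contained} to show the capped manifolds still nest inside the original ones; you instead use ``half lives, half dies'' to see that $\partial N_k$ is spheres plus at most one torus and cap only the spheres. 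That variant is legitimate, but it creates a case you have not addressed.

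The gap is in Step 1, in the choice of the $3$--cell ``on the side disjoint from $\mathrm{int}\,N_k$''. A boundary sphere $S_j$ of $N_k$ separates $\mathbb{S}^3$ into two balls, and nothing in your setup prevents $\mathrm{int}\,N_k$ (hence $K$) from lying entirely in the \emph{bounded} ball, so that the cell you glue on is the one containing $\infty$. In that case $\hat N_k\ni\infty$, the set $\mathbb{S}^3-\hat N_k$ is the \emph{bounded} component of $\mathbb{S}^3-\tau_k$, and your path argument then places $x$ in the same component as $\infty$, i.e.\ \emph{inside} $\hat N_k$ --- the opposite of what you need; such $\hat N_k$ certainly do not shrink to $K$. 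The case is excludable, and with an argument you already have on hand: if for infinitely many $k$ some boundary sphere $S_{j(k)}\subseteq N_k$ has $K$ in its bounded complementary ball $B'_{j(k)}$, then for any $x\in\mathbb{S}^3-K$ a path from $x$ to $\infty$ in the connected set $\mathbb{S}^3-K$ eventually misses $S_{j(k)}$, so $x\notin B'_{j(k)}$ for large $k$; hence the balls $B'_{j(k)}$ form a neighbourhood basis of $K$, making $K$ cellular and forcing $\check{H}^1(K;\mathbb{Z})=0$, against hypothesis. After discarding finitely many $k$ you may therefore assume every sphere of $\partial N_k$ bounds its complementary cell on the bounded side, and the rest of your proof goes through. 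You should also record, for the direct-limit computation, that $H_1(N_k;\mathbb{Z})$ is torsion-free (Mayer--Vietoris in $\mathbb{S}^3$ shows $H_1(N_k;\mathbb{Z})\oplus H_1(\overline{\mathbb{S}^3-N_k};\mathbb{Z})\cong H_1(\partial N_k;\mathbb{Z})$), so that $\dim H_1(N_k;\mathbb{Z}_2)=1$ really does give $H^1(N_k;\mathbb{Z})\cong\mathbb{Z}$; for the $\hat N_k$ that survive to Step 2 this is automatic since they are solid tori or knot exteriors.
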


To prove it we need an auxiliary lemma. Let $N \subseteq \mathbb{R}^3$ be a polyhedral, compact, connected $3$--manifold. Then $\overline{\mathbb{R}^3 - N}$ is a disjoint union of polyhedral $3$--manifolds, exactly one of which is unbounded while the others are bounded. Denote by $M$ be union of the latter and set $\hat{N} := M \cup N$. We say that the compact $3$--manifold $\hat{N}$ is the result of \emph{capping the holes} of $N$.

\begin{lemma} \label{lem:cap_holes} $H_2(\hat{N};\mathbb{Z}_2) = 0$ and $\dim\ H_1(\hat{N};\mathbb{Z}_2) \leq \dim\ H_1(N;\mathbb{Z}_2)$.
\end{lemma}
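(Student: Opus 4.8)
\textbf{Proof proposal for Lemma \ref{lem:cap_holes}.}

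The plan is to analyze the Mayer--Vietoris sequence for the decomposition $\hat N = N \cup M$, where $N \cap M = \partial N \cap \partial M$ is a disjoint union of closed surfaces $\Sigma_1, \ldots, \Sigma_k$ (the boundary components of $N$ that face the bounded complementary regions), all coefficients taken in $\mathbb{Z}_2$. First I would set up notation carefully: each bounded component $M_j$ of $\overline{\mathbb{R}^3 - N}$ is a compact polyhedral $3$--manifold whose boundary is a single connected closed surface $\Sigma_j$ (it is connected because $M_j$ meets $N$ along exactly one component of $\partial N$ — here one uses that $N$ is connected, so the unbounded complementary component is the only one that could touch several boundary components in a way that matters; more precisely each $M_j$ is glued to $N$ along one surface). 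The key input is that $\Sigma_j$, being a closed orientable surface embedded in $\mathbb{R}^3$, bounds on both sides, and $M_j$ is the region it bounds together with its "interior filling"; the essential point I will extract is that the inclusion $\Sigma_j \hookrightarrow M_j$ is surjective on $H_1(\,\cdot\,;\mathbb{Z}_2)$ and that $H_2(M_j;\mathbb{Z}_2)$ injects into $H_1(\Sigma_j;\mathbb{Z}_2)$ suitably — both facts following from Lefschetz duality for $M_j$ together with the half-lives-half-dies phenomenon for a $3$--manifold bounded by a surface.

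The main computation: write the reduced Mayer--Vietoris sequence
\[
\xymatrix@C=16pt{
H_2(\textstyle\coprod \Sigma_j) \ar[r] & H_2(N)\oplus H_2(M) \ar[r] & H_2(\hat N) \ar[r] & H_1(\textstyle\coprod \Sigma_j) \ar[r]^-{\phi} & H_1(N)\oplus H_1(M)
}
\]
with $\mathbb{Z}_2$ coefficients throughout. For the first claim, $H_2(\hat N;\mathbb{Z}_2) = 0$, it suffices to show the map $H_2(N)\oplus H_2(M) \to H_2(\hat N)$ is surjective and that $H_2(\hat N) \to H_1(\coprod\Sigma_j)$ is zero, or equivalently that $\phi$ is injective. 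Injectivity of $\phi$ on each summand $H_1(\Sigma_j)$ is exactly the statement that a class on $\Sigma_j$ that dies in $N$ \emph{and} dies in $M_j$ must already be zero on $\Sigma_j$: this is precisely the Lefschetz-duality fact that the kernels of $H_1(\Sigma_j) \to H_1(N)$ and $H_1(\Sigma_j) \to H_1(M_j)$ are complementary Lagrangians of half the dimension with respect to the intersection form, hence intersect trivially. (One must also check $H_2(N)\oplus H_2(M)\to H_2(\hat N)$ surjects; since $\hat N\subseteq\mathbb{R}^3$ is a compact $3$--manifold with nonempty boundary, $H_2(\hat N;\mathbb{Z}_2)$ is carried by its boundary components, each of which lies in $N$ or in $M$, giving surjectivity.) Combining, $H_2(\hat N;\mathbb{Z}_2)=0$.

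For the inequality $\dim H_1(\hat N;\mathbb{Z}_2) \le \dim H_1(N;\mathbb{Z}_2)$, I continue the Mayer--Vietoris sequence one step further:
\[
H_2(\hat N) \longrightarrow H_1(\textstyle\coprod\Sigma_j) \stackrel{\phi}{\longrightarrow} H_1(N)\oplus H_1(M) \longrightarrow H_1(\hat N) \longrightarrow \widetilde H_0(\textstyle\coprod\Sigma_j) = 0,
\]
the last term vanishing because each $\Sigma_j$ is connected and $\hat N$ is connected. Hence $H_1(\hat N) \cong \bigl(H_1(N)\oplus H_1(M)\bigr)/\operatorname{im}\phi$. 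Now use the half-lives-half-dies principle once more on each $M_j$: the map $H_1(\Sigma_j)\to H_1(M_j)$ is surjective, so the image of $\phi$ contains the whole of $H_1(M) = \bigoplus_j H_1(M_j)$, namely the $(0,\ast)$ part. Therefore the quotient $\bigl(H_1(N)\oplus H_1(M)\bigr)/\operatorname{im}\phi$ is a quotient of $H_1(N)$, which gives $\dim H_1(\hat N;\mathbb{Z}_2)\le\dim H_1(N;\mathbb{Z}_2)$ as desired.

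The step I expect to be the main obstacle is the clean invocation of the "half-lives-half-dies" duality for the pieces $M_j$ — specifically justifying that $H_1(\Sigma_j;\mathbb{Z}_2)\to H_1(M_j;\mathbb{Z}_2)$ is surjective and that the two kernels (in $N$ and in $M_j$) meet only in $0$. This is standard for a compact oriented $3$--manifold with connected boundary, and over $\mathbb{Z}_2$ one does not even need orientability, but it requires being careful that each $M_j$ really does have \emph{connected} boundary $\Sigma_j$, which in turn rests on $N$ being connected so that "capping the holes" attaches to each hole a piece glued along a single surface. Once that topological bookkeeping is pinned down, the two conclusions drop out of the Mayer--Vietoris sequence as above.
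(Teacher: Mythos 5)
Your Mayer--Vietoris strategy is essentially the one the paper uses (the paper caps the holes one at a time, you cap them all at once), and your derivation of $\dim H_1(\hat{N};\mathbb{Z}_2)\le\dim H_1(N;\mathbb{Z}_2)$ from the surjectivity of $H_1(\Sigma_j;\mathbb{Z}_2)\to H_1(M_j;\mathbb{Z}_2)$ is correct in outline. But two steps are genuinely broken. First, your argument for $H_2(\hat{N};\mathbb{Z}_2)=0$ does not close: by exactness at $H_2(\hat{N})$, the conditions you list --- surjectivity of $H_2(N)\oplus H_2(M)\to H_2(\hat{N})$, vanishing of the connecting map, and injectivity of $\phi$ --- are all equivalent to one another, and together they only show that $H_2(\hat{N})$ is a \emph{quotient} of $H_2(N)\oplus H_2(M)$. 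That group is typically nonzero: $\dim H_2(N;\mathbb{Z}_2)$ equals the number of bounded complementary components of $N$ (e.g.\ $N\cong T^2\times[0,1]$ has $H_2\neq 0$), so surjectivity proves nothing; what you would need is that the map $H_2(N)\oplus H_2(M)\to H_2(\hat{N})$ is \emph{zero}. The clean route is Alexander duality applied to $\hat{N}$ itself: $\mathbb{S}^3-\hat{N}$ is connected by construction, so $H_2(\hat{N};\mathbb{Z}_2)\cong\tilde{H}_0(\mathbb{S}^3-\hat{N};\mathbb{Z}_2)=0$. This is exactly what the paper does.

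Second, the key input --- surjectivity of $H_1(\Sigma_j;\mathbb{Z}_2)\to H_1(M_j;\mathbb{Z}_2)$ --- is true but does not follow from ``half lives, half dies'': that principle controls the dimension of the image \emph{inside} $H_1(\Sigma_j)$, i.e.\ it says the kernel is half-dimensional; it says nothing about whether the image is all of $H_1(M_j)$, which could a priori be larger. The actual reason is $H_1(M_j,\partial M_j;\mathbb{Z}_2)\cong H^2(M_j;\mathbb{Z}_2)\cong H_2(M_j;\mathbb{Z}_2)=0$, where the vanishing of $H_2(M_j)$ comes from Alexander duality together with the connectedness of $N$: $H_2(\mathbb{S}^3-N;\mathbb{Z}_2)\cong\tilde{H}^0(N;\mathbb{Z}_2)=0$ and ${\rm int}\ M_j$ is a component of $\mathbb{S}^3-N$. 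This is where the hypothesis that $N$ is connected actually enters the proof; your proposal never invokes Alexander duality and so never uses it. (The same relative exact sequence also shows $\partial M_j$ is connected, which you assert but do not prove.) Two smaller slips: $\tilde{H}_0(\coprod_j\Sigma_j)$ is not zero when there is more than one hole --- what you need is injectivity of $\tilde{H}_0(\coprod_j\Sigma_j)\to\tilde{H}_0(M)$, which holds because distinct $\Sigma_j$ land in distinct components $M_j$; and two Lagrangians of the intersection form need not intersect trivially, so the ``complementary Lagrangians'' step would itself require the splitting $H_1(\Sigma_j)\cong H_1(M_j)\oplus H_1(\mathbb{S}^3-{\rm int}\ M_j)$, though this becomes moot once $H_2(\hat{N})=0$ is obtained by Alexander duality.
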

\begin{proof} Through this proof we use homology with $\mathbb{Z}_2$ coefficients without further mention. Let $U$ be one of the bounded connected components of $\mathbb{R}^3 - N$, so that it is also a connected component of $\mathbb{S}^3 - N$ and in particular $H_2(U)$ is a direct summand of $H_2(\mathbb{S}^3 - N)$. By Alexander duality $H_2(\mathbb{S}^3 - N) = \tilde{H}^0(N) = 0$, and in particular $H_2(U) = 0$ too. Consider the compact and connected $3$--manifold $\overline{U}$. It is homotopy equivalent to its interior, so $H_2(\overline{U}) = 0$ and by Lefschetz duality $H_1(\overline{U},\partial{\overline{U}}) = H_2(\overline{U}) = 0$. The exact sequence for the pair $(\overline{U},\partial \overline{U})$ in reduced homology \[\xymatrix{H_1(\partial \overline{U}) \ar[r] & H_1(\overline{U}) \ar[r] & H_1(\overline{U},\partial \overline{U}) = 0 \ar[r] & \tilde{H}_0(\partial \overline{U}) \ar[r] & \tilde{H}_0(\overline{U}) = 0}\] then implies that (i) the inclusion $\partial \overline{U} \subseteq \overline{U}$ induces a surjective map in one--dimensional homology and (ii) $\partial \overline{U}$ is connected.

Consider the compact $3$--manifold $N \cup \overline{U}$. Observe that $N \cap \overline{U} = \partial \overline{U}$ and $N \cup \overline{U}$ is connected because it is the union of two connected, nondisjoint sets. This leads to the Mayer--Vietoris sequence in reduced homology \[\xymatrix{H_1(\partial\overline{U}) \ar[r]^-{\alpha} & H_1(N) \oplus H_1(\overline{U}) \ar[r] &  H_1(N \cup \overline{U}) \ar[r] & \tilde{H}_0(\partial \overline{U}) = 0}\] We argued in the previous paragraph that the inclusion induced map $H_1(\partial \overline{U}) \longrightarrow H_1(\overline{U})$ is surjective; thus, $\dim\ {\rm im}(\alpha) \geq \dim\ H_1(\overline{U})$. Therefore, since the arrow entering $H_1(N \cup \overline{U})$ is also surjective, \[\dim\ H_1(N \cup \overline{U}) = \dim\ H_1(N) + \dim\ H_1(\overline{U}) - \dim\ {\rm im}(\alpha) \leq \dim\ H_1(N).\]

Performing the above construction repeatedly to adjoin all the bounded connected components of $\mathbb{R}^3 - N$ to $N$ we obtain a compact, connected manifold $\hat{N}$ such that $\dim\ H_1(\hat{N}) \leq \dim\ H_1(N)$ and $\mathbb{R}^3 - \hat{N}$ has a single connected (unbounded) component. Then by Alexander duality again $H_2(\hat{N}) = \tilde{H}_0(\mathbb{R}^3 - \hat{N}) = 0$. This completes the proof.
\end{proof}

The proof of the next lemma is essentially the same as that of \cite[Lemma 20, p. 3603]{mio6}.

\begin{lemma} \label{lem:contained} Let $N$ be a compact polyhedral $3$--manifold in $\mathbb{R}^3$. Let $S \subseteq {\rm int}\ N$ be a closed, connected, polyhedral surface such that its fundamental class $[S] \in H_2(S;\mathbb{Z}_2)$ is zero in $H_2(N;\mathbb{Z}_2)$. Then the bounded connected component of $\mathbb{R}^3 - S$ is contained in $N$.
\end{lemma}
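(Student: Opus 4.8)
The plan is to work entirely with simplicial chains over $\mathbb{Z}_2$. Since $S$ is a connected closed surface it is orientable (closed non-orientable surfaces do not embed in $\mathbb{R}^3$), and by Alexander duality $\tilde{H}_0(\mathbb{S}^3-S;\mathbb{Z}_2)\cong\check{H}^2(S;\mathbb{Z}_2)=\mathbb{Z}_2$; hence $\mathbb{R}^3-S$ has exactly two components, a bounded one $U$ and an unbounded one. First I would fix a PL triangulation of a large closed ball $B$ containing $N\cup\overline{U}$ (note $\overline{U}$ is bounded) in which $N$ and $S$ are subcomplexes. Because $S$ is a subcomplex, every open simplex of $B$ is either contained in $S$ or disjoint from $S$ and hence, being connected, contained in a single component of $\mathbb{R}^3-S$; consequently $\overline{U}$, being the closure of the union of those open simplices contained in $U$, is automatically a subcomplex. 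Write $z_S\in C_2(B;\mathbb{Z}_2)$ for the sum of the $2$-simplices of $S$: it is a cycle representing the fundamental class $[S]$, and since $S\subseteq\mathrm{int}\,N$ we have $z_S\in C_2(N;\mathbb{Z}_2)$.

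The hypothesis that the image of $[S]$ in $H_2(N;\mathbb{Z}_2)$ vanishes means exactly that $z_S=\partial D$ for some $3$-chain $D\in C_3(N;\mathbb{Z}_2)$. The crux is to show that $D$ is forced to equal the fundamental chain $[\overline{U}]$ (the sum of the $3$-simplices of $\overline{U}$). On the one hand, $S$ is polyhedral, hence bicollared in $\mathbb{R}^3$, so $\overline{U}$ is a compact PL $3$-manifold with $\partial\overline{U}=S$, and therefore $\partial[\overline{U}]=z_S$ by the standard description of the fundamental class of a manifold with boundary. On the other hand, if $D'$ is any $3$-chain of $B$ with $\partial D'=z_S$, then $D+D'$ is a $3$-cycle in the $3$-dimensional complex $B$; since $C_4(B;\mathbb{Z}_2)=0$ and $H_3(B;\mathbb{Z}_2)=0$ we get $Z_3(B;\mathbb{Z}_2)=0$, so $D'=D$. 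Applying this with $D'=[\overline{U}]$ yields $D=[\overline{U}]$.

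Finally, $D\in C_3(N;\mathbb{Z}_2)$ now reads: every $3$-simplex of $\overline{U}$ is a simplex of $N$. As $\overline{U}$ is a $3$-manifold it is the union of its $3$-simplices, so $\overline{U}\subseteq N$; in particular the bounded component $U$ of $\mathbb{R}^3-S$ is contained in $N$, which is what we want. The step I expect to be the main obstacle is the PL groundwork behind the second paragraph: arranging the triangulation so that $N$, $S$ and $\overline{U}$ are simultaneously subcomplexes, and justifying that $\overline{U}$ is a compact PL $3$-manifold with boundary $S$ (this is precisely where polyhedrality, and hence tameness, of $S$ is genuinely used) so that $\partial[\overline{U}]=z_S$. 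None of this is deep, but it must be carried out with care; in substance the argument is the same as that of \cite[Lemma~20, p. 3603]{mio6}.
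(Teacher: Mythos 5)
Your argument is correct. The paper itself gives no proof of this lemma, deferring to \cite[Lemma 20, p.~3603]{mio6}, so there is nothing to match line by line; but your chain-level version is complete and sound: once $B$ is triangulated with $N$ and $S$ as subcomplexes, $\overline{U}$ is automatically a subcomplex, the two $3$--chains $D$ and $[\overline{U}]$ bounding $z_S$ must coincide because $Z_3(B;\mathbb{Z}_2)=0$, and $D\in C_3(N;\mathbb{Z}_2)$ then forces $\overline{U}\subseteq N$. Two small remarks. First, the orientability aside is superfluous: with $\mathbb{Z}_2$ coefficients $\check{H}^2(S;\mathbb{Z}_2)=\mathbb{Z}_2$ for every closed surface, so Alexander duality gives the two complementary components regardless. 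Second, the one step you rightly flag as needing care --- that $\overline{U}$ is a compact PL $3$--manifold with $\partial\overline{U}$ equal to \emph{all} of $S$, so that $\partial[\overline{U}]=z_S$ --- does follow from the local flatness (hence bicollaredness) of polyhedral surfaces in $\mathbb{R}^3$ together with connectedness of $S$, which forces each side of the bicollar to lie in a single complementary component and every point of $S$ to be a frontier point of $U$. A shorter route, avoiding all the PL bookkeeping, is homological: if $x$ lies in the bounded component $U$ but $x\notin N$, then the inclusion $S\subseteq N\subseteq\mathbb{R}^3-\{x\}$ shows $[S]$ dies in $H_2(\mathbb{R}^3-\{x\};\mathbb{Z}_2)\cong\mathbb{Z}_2$, whereas $[S]$ maps to the generator precisely because $x$ is enclosed by $S$ (mod $2$ degree), a contradiction. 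Your proof buys a slightly stronger conclusion ($\overline{U}\subseteq N$, with an explicit identification of the bounding chain) at the cost of the triangulation groundwork; either is acceptable here.
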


\begin{proof}[Proof of Theorem \ref{teo:rdeK}] Since $K$ is connected and has $r(K) = 1$, it follows that $K$ has a nested neighbourhood basis of polyhedral, compact $3$--manifolds $N_i$ that are connected and have ${\rm dim}\ H_1(N_i;\mathbb{Z}_2) = 1$. Consider the collection $\hat{N}_i$ obtained from the $N_i$ by capping their holes in the way described just before Lemma \ref{lem:cap_holes}.
\smallskip

{\it Claim.} $(\hat{N}_i)$ is still a neighbourhood basis of $K$.

{\it Proof of claim.} Since $\check{H}^2(K;\mathbb{Z}_2)$ is the direct limit of $H^2(N_i;\mathbb{Z}_2)$ and $\check{H}^2(K;\mathbb{Z}_2) = 0$ by assumption, for each $i$ there exists $j > i$ such that the inclusion $N_j \subseteq N_i$ induces the zero map in $H^2(\, \cdot\, ; \mathbb{Z}_2)$ and therefore also in $H_2(\, \cdot\, ; \mathbb{Z}_2)$. Consider $\hat{N}_j$. Since $H_2(\hat{N}_j;\mathbb{Z}_2) = 0$, it is easy to see that $\partial \hat{N}_j$ is connected; in fact, it is precisely one of the connected components of $\partial N_j$. Thus in particular its fundamental class $[\partial \hat{N}_j]$ is sent to zero in $H_2(N_i;\mathbb{Z}_2)$ and so by Lemma \ref{lem:contained} we have $\hat{N}_j \subseteq N_i$. This proves that the $(\hat{N}_i)$ are indeed a neighbourhood basis of $K$. $_{\blacksquare}$
\smallskip

By replacing the original $(N_i)$ with the capped $(\hat{N}_i)$ and denoting the latter again by $(N_i)$ for simplicity we see (invoking Lemma \ref{lem:cap_holes}) that $K$ has a nested neighbourhood basis of compact, polyhedral, connected $3$--manifolds such that $\dim\ H_1(N_i;\mathbb{Z}_2) = 1$ and $H_2(N_i;\mathbb{Z}_2) = 0$. A straightforward computation using Lefschetz duality and the long exact sequence for the pair $(N_i,\partial N_i)$ shows that $\partial N_i$ has the homology (with $\mathbb{Z}_2$ coefficients) of a $2$--torus; hence it is a $2$--torus by the classification of connected closed surfaces. By the theorem of Alexander either $N_i$ is a solid torus or $\overline{\mathbb{S}^3 - N_i}$ is a solid torus (or both are). If the first case happens for infinitely many $i$ then $K$ is toroidal by definition and we are finished. The other case requires some more argumentation. Let us assume, then, that $\overline{\mathbb{S}^3 - N_i}$ is a solid torus $T_i$ for every $i$. Observe that now we have an ascending sequence of solid tori; that is, $T_1 \subseteq T_2 \subseteq \ldots$, and their union is precisely $\mathbb{S}^3 - K$. Denote by $w_i$ the winding number of $T_i$ inside $T_{i+1}$. If we had $w_i = 0$ for infinitely many $i$ then $H_1(\mathbb{S}^3 - K;\mathbb{Z}) = 0$ because this homology group is the direct limit of the direct sequence $H_1(T_i;\mathbb{Z}) \longrightarrow H_1(T_{i+1};\mathbb{Z})$. By Alexander duality this implies $\check{H}^1(K;\mathbb{Z}) = 0$ contradicting the hypotheses of the theorem. Thus we may assume that $w_i \geq 1$ for every $i$. By Equation \eqref{eq:schubert} we then have $g(T_i) \geq w_i \cdot g(T_{i+1}) \geq g(T_{i+1})$ and so the sequence of genera $g(T_i)$ is nonincreasing so it must eventually stabilize: $g(T_i) = g$ for every $i \geq i_0$. Suppose that $g > 0$. From $g \geq w_i \cdot g$ we conclude that $w_i \leq 1$ for every $i \geq i_0$; since $w_i \neq 0$ as argued above, $w_i = 1$ for every $i \geq i_0$. This implies that $H_1(\mathbb{S}^3 - K;\mathbb{Z}) = \mathbb{Z}$ and therefore $\check{H}^1(K;\mathbb{Z}) = \mathbb{Z}$ by Alexander duality, contrary to the assumption of the theorem. Therefore $g = 0$ and so $T_i$ is unknotted for $i \geq i_0$, which implies that $N_i$ is an (unknotted) solid torus and so $K$ is an unknotted toroidal set.
\end{proof}

The following corollary generalizes the above result to the non connected case:

\begin{corollary} \label{cor:rdeK} Let $K \subseteq \mathbb{R}^3$ be a compactum with $r(K) = 1$ and $\check{H}^2(K;\mathbb{Z}) = 0$. If $\check{H}^1(K;\mathbb{Z}) \neq 0, \mathbb{Z}$ then exactly one of the connected components of $K$ is a toroidal set and the remaining ones are cellular sets.
\end{corollary}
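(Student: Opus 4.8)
The plan is to single out one connected component of $K$ that carries all of $\check{H}^1$, apply Theorem \ref{teo:rdeK} to it, and check that every other component is cellular.

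First, exactly as in the proof of Theorem \ref{teo:rdeK}, I would fix a nested neighbourhood basis $(N_i)$ of $K$ by compact polyhedral $3$--manifolds with $\dim H_1(N_i;\mathbb{Z}_2)=1$ and, capping holes component by component with the help of $\check{H}^2(K;\mathbb{Z})=0$ (Lemmas \ref{lem:cap_holes} and \ref{lem:contained}), arrange in addition that $H_2(N_i;\mathbb{Z}_2)=0$; discarding finitely many terms I may keep $\dim H_1(N_i;\mathbb{Z}_2)=1$, since a subsequence on which this dimension were $0$ would witness $r(K)=0$. Writing $N_i=\bigsqcup_l N_i^l$, the vanishing of $H_2(N_i^l;\mathbb{Z}_2)$ for every $l$ together with $\sum_l\dim H_1(N_i^l;\mathbb{Z}_2)=1$ forces exactly one component $N_i^0$ to have $\dim H_1(N_i^0;\mathbb{Z}_2)=1$, all others having $\dim H_1=0$. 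A Lefschetz duality computation then shows that each of the latter components has $S^2$ boundary and that $\partial N_i^0$ is a $2$--torus; by the theorem of Alexander \cite[Theorem 1, p. 107]{rolfsen1} the non-distinguished components are $3$--cells while $N_i^0$, or the closure of its complement in $\mathbb{S}^3$, is a solid torus. In every case $H^1(N_i^0;\mathbb{Z})=\mathbb{Z}$, $H^2(N_i^0;\mathbb{Z})=0$ and $H^2(N_i^0;\mathbb{Z}_2)=0$.

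Next I would make the distinguished components nested. Since $3$--cells contribute nothing to $H^1$, the inclusion induced maps identify $\check{H}^1(K;\mathbb{Z})=\varinjlim H^1(N_i;\mathbb{Z})$ with the direct limit of a sequence $\mathbb{Z}\xrightarrow{b_1}\mathbb{Z}\xrightarrow{b_2}\cdots$ whose $i$-th term is $H^1(N_i^0;\mathbb{Z})$; moreover, if $N_{i+1}^0$ fails to lie in $N_i^0$ then it lies inside a $3$--cell of $N_i$ and the bonding map $b_i$ is zero. As $\check{H}^1(K;\mathbb{Z})\neq 0$, the $b_i$ cannot vanish for infinitely many $i$, hence $N_{i+1}^0\subseteq N_i^0$ for all sufficiently large $i$; after discarding finitely many terms I assume $(N_i^0)$ is a nested sequence of compact polyhedral connected $3$--manifolds. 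Put $K^0:=\bigcap_i N_i^0$. A routine argument (any connected subset of $K$ meeting every $N_i^0$ is contained in $\bigcap_i N_i^0$, and $(N_i^0)$ is a neighbourhood basis of $K^0$ by compactness) shows that $K^0$ is a connected component of $K$ with neighbourhood basis $(N_i^0)$. Consequently $r(K^0)\le 1$, $\check{H}^2(K^0;\mathbb{Z})=0=\check{H}^2(K^0;\mathbb{Z}_2)$, and $\check{H}^1(K^0;\mathbb{Z})=\varinjlim H^1(N_i^0;\mathbb{Z})=\check{H}^1(K;\mathbb{Z})\neq 0,\mathbb{Z}$. Since $\check{H}^1(K^0;\mathbb{Z})\neq 0$, $K^0$ is not cellular, so $r(K^0)\neq 0$ by the characterization of $r=0$ recalled after Remark \ref{rem:r_tor} (applicable because $\check{H}^2(K^0;\mathbb{Z}_2)=0$); hence $r(K^0)=1$ and Theorem \ref{teo:rdeK} gives that $K^0$ is a toroidal set.

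Finally, for a component $C\neq K^0$ of $K$, the argument above shows $C$ cannot be contained in $N_i^0$ for all $i$, so there is $i_1$ with $C\cap N_{i_1}^0=\emptyset$ and hence, the $N_i^0$ being nested, $C\cap N_i^0=\emptyset$ for all $i\ge i_1$. Then for $i\ge i_1$ the component of $N_i$ containing $C$ is one of the $3$--cells, these cells are nested, and they form a neighbourhood basis of $C$; thus $C$ is cellular. Therefore $K^0$ is the unique non-cellular --- equivalently, the unique toroidal --- component of $K$, and every other component is cellular, which is the assertion. The step I expect to require the most care is the bookkeeping over the (possibly infinitely many) components of the $N_i$: checking that $\dim H_1$ and $H_2$ split as claimed, that $\check{H}^1(K;\mathbb{Z})$ is computed by the distinguished components alone with $b_i=0$ precisely when $N_{i+1}^0$ escapes $N_i^0$, and that the set $K^0$ so produced is genuinely a connected component of $K$ so that Theorem \ref{teo:rdeK} applies to it.
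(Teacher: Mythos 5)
Your proposal is correct, and its skeleton matches the paper's: isolate the unique component of each neighbourhood that carries the first homology, show the distinguished components are eventually nested, realize the exceptional component of $K$ as their nested intersection, feed it to Theorem \ref{teo:rdeK}, and check the remaining components are cellular. Where you genuinely diverge is in the mechanics. The paper works with the clopen pieces $K_{ij} := N_{ij} \cap K$ and exploits the formal properties of the invariant $r$ --- additivity over the pieces, and the ``nullity property'' that $r=0$ together with $\check{H}^2(\,\cdot\,;\mathbb{Z}_2)=0$ forces cellular components and vanishing $\check{H}^1$ --- so it never needs to know what the non-distinguished manifold components actually are. You instead do the geometry by hand: cap holes component by component, run Lefschetz duality to identify the boundaries as a single $2$--torus plus $2$--spheres, and invoke Alexander to conclude the non-distinguished components are genuine $3$--cells. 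Your route buys a more concrete picture of the neighbourhoods (and makes the cellularity of the other components of $K$ visibly geometric), at the price of some bookkeeping the paper's argument sidesteps. Two small points to tighten: capping the holes of a disconnected manifold can merge components (one component may sit in a bounded complementary domain of another), so the capped components need not remain in bijection with the original ones --- your dimension count and the observation that $\check{H}^1(K;\mathbb{Z})\neq 0$ excludes the all-cells alternative do absorb this, but it deserves an explicit sentence; and the step ``connected boundary a $2$--sphere implies the component is a $3$--cell'' rests on Alexander's sphere theorem (the PL Sch\"onflies theorem in dimension $3$), not on the torus theorem of \cite[Theorem 1, p.~107]{rolfsen1} that you cite, which is the statement used for the distinguished component.
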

\begin{proof} We shall write the proof as if $K$ had infinitely many connected components. The same argument works (but can be more easily written) if $K$ has only finitely many connected components.

Let $(N_i)$ be a neighbourhood basis of $K$ such that each $N_i$ is a compact manifold and $N_{i+1} \subseteq N_i$. For each $i$ let $N_{i1}, \ldots, N_{im_i}$ denote the (finitely many) connected components of $N_i$, and set $K_{ij} := N_{ij} \cap K$. 

Now, for fixed $i$ we have $K = \uplus_{j=1}^{m_i} K_{ij}$. Since $\check{H}^2(K;\mathbb{Z}) = \oplus_j \check{H}^2(K_{ij};\mathbb{Z})$ the condition that $\check{H}^2(K;\mathbb{Z}) = 0$ implies that the same is true of each $K_{ij}$. Also, it is clear from the definition of $r$ that $r(K) = \sum_j r(K_{ij})$; thus, there is precisely one $0 \leq j_0 \leq m$ such that $r(K_{i{j_0}}) = 1$ whereas $r(K_{ij}) = 0$ for the remaining $j \neq j_0$. Without loss of generality we shall assume that $j_0 = 1$. Then for every $1 \leq j \leq m_i$ the nullity property of $r$ implies that $\check{H}^1(K_{ij};\mathbb{Z}) = 0$ and every connected component $C$ of $K$ contained in some $K_{ij}$ other than $K_{i1}$ is cellular. Also, since $\check{H}^1(K;\mathbb{Z}) = \oplus_j \check{H}^1(K_{ij};\mathbb{Z}) = \check{H}^1(K_{i1};\mathbb{Z})$, it follows that the inclusion $K_{i1} \subseteq K$ induces an isomorphism $\check{H}^1(K;\mathbb{Z}) = \check{H}^1(K_{i1};\mathbb{Z})$, so in particular $\check{H}^1(K_{i1};\mathbb{Z}) \neq 0,\mathbb{Z}$.

Now let $i \rightarrow +\infty$. Since we have $N_{i+1} \subseteq N_i$ for every $i$, the $K_{i1}$ form a decreasing sequence. Let $K'$ be the intersection of this decreasing sequence. It is straightforward to check that $K'$ is connected, has $r(K') \leq 1$, and (by the continuity property of \v{C}ech cohomology) also $\check{H}^2(K';\mathbb{Z}) = 0$. Similarly, because both inclusions $K_{i1} \subseteq K$ and $K_{{i+1}0} \subseteq K$ induce isomorphisms in $\check{H}^1$ for each $i$, the same is true of the inclusion $K_{{i+1}0} \subseteq K_{i0}$ and so also of $K' \subseteq K$. In particular we cannot have $r(K') = 0$ because then $K'$ would be cellular and $\check{H}^1(K';\mathbb{Z}) = 0$, which is not the case. Thus $r(K') = 1$. Since $K'$ is connected, Theorem \ref{teo:rdeK} applies and shows that $K'$ is a toroidal set. Any other component $C$ of $K$ is contained, for some big enough $i_0$ onwards, on some $K_{ij}$ with $j \geq 1$; thus, it is cellular.
\end{proof}

\bibliographystyle{plain}
\bibliography{biblio}

\end{document}